\definecolor{verde}{rgb}{0.5,.7,.2}
\theoremstyle{plain}
\newtheorem{theorem}{Theorem}[section]
\newtheorem{definition}[theorem]{Definition}
\newtheorem{lemma}[theorem]{Lemma}
\newtheorem{proposition}[theorem]{Proposition}
\newtheorem{corollary}[theorem]{Corollary}
\newtheorem{remark}[theorem]{Remark}
\newtheorem{example}[theorem]{Example}
\newtheorem{question}[theorem]{Question}
\newtheorem{remark-question}[section]{Remark-Question}
\newtheorem{conjecture}[section]{Conjecture}
\begin{document}
\title[Einstein warped $\mathrm{G}_2$ and $\mathrm{Spin}(7)$ manifolds]{Einstein warped $\mathbf{G_2}$ and $\mathbf{Spin(7)}$ manifolds}
\date{\today}

\author{V\'ictor Manero}
\address[V. Manero and L. Ugarte]{Departamento de Matem\'aticas\,-\,I.U.M.A.\\
Universidad de Zaragoza\\
Campus Plaza San Francisco\\
50009 Zaragoza, Spain}
\email{vmanero@unizar.es}
\email{ugarte@unizar.es}

\author{Luis Ugarte}


\begin{abstract}
In this paper most of the classes of $\mathrm{G}_2$-structures with Einstein induced metric of negative, null or positive scalar curvature
are realized.
This is carried out by means of warped $\mathrm{G}_2$-structures with fiber an Einstein $\mathrm{SU}(3)$ manifold.
The torsion forms of any warped $\mathrm{G}_2$-structure are explicitly described in terms of the torsion forms of the $\mathrm{SU}(3)$-structure and the warping function,
which allows to give characterizations of the principal classes of Einstein warped $\mathrm{G}_2$ manifolds.
Similar results are obtained for Einstein warped $\mathrm{Spin}(7)$ manifolds with fiber a $\mathrm{G}_2$ manifold.
\end{abstract}


\maketitle

\setcounter{tocdepth}{3} \tableofcontents

\begin{section}*{Introduction}

\noindent
The relation between geometric structures (as almost Hermitian or $\mathrm{G}_2$-structures, among others) and Einstein metrics has been deeply studied by many different authors. In particular, one of the most important problems related with this issue is the longstanding conjecture
due to Goldberg~\cite{Go}:

\medskip

\centerline{``\emph{A compact almost K\"ahler Einstein manifold is K\"ahler}''.}

\medskip

Partial affirmative answers have been obtained under some additional curvature conditions. For instance, in~\cite{Se} Sekigawa proved that assuming non-negative scalar curvature the conjecture is true. However, the general case is still open.
Concerning the non-compact version of this conjecture, Apostolov, Draghici and Moroianu found a counterexample which is described in \cite{ADM}. This example consists on a non-compact solvmanifold (solvable Lie group) endowed with a left-invariant almost K\"ahler structure
whose induced metric is Einstein. As the almost complex structure is not integrable, the almost K\"ahler structure is not K\"ahler.

A $\mathrm{G}_2$-structure on a 7-dimensional manifold $M$ consists in a reduction of the structure group of its frame bundle to the Lie group $\mathrm{G}_2$. Equivalently, such structure can be characterized by the existence of a global
non-degenerate 3-form $\varphi$ on $M$. Any $\mathrm{G}_2$-structure has an induced Riemannian metric $g_\varphi$.
When $d\varphi=0$ the manifold $(M,\varphi)$ is called closed $\mathrm{G}_2$ manifold, and if in addition
the 3-form $\varphi$ is  coclosed then it is necessarily parallel with respect to the Levi-Civita connection of $g_\varphi$ \cite{FG}.
Parallel $\mathrm{G}_2$ manifolds are Ricci flat and have holonomy in $\mathrm{G}_2$.
Gibbons, Page and Pope described a $\mathrm{G}_2$-analogue of the Goldberg conjecture in~\cite{GPP}
where they study supersymmetric string solutions on closed $\mathrm{G}_2$-manifolds.
This analogue can be stated as follows:

\smallskip

\centerline{``\emph{A compact Einstein closed $\mathrm{G}_2$ manifold is parallel}''.}

\medskip

In \cite{CI1} Cleyton and Ivanov answer positively to this question.
For the non-compact version, several authors have given partial affirmative answers under some additional conditions.
For example, in \cite{Br} it is shown that every Einstein closed $\mathrm{G}_2$ manifold with non-negative scalar curvature is parallel.
In \cite{CI} the authors proved that Einstein closed $\mathrm{G}_2$-manifolds which are also $\ast$-Einstein are, in fact, parallel.
In \cite{FFM} it is shown that in contrast to the almost K\"ahler case, a seven-dimensional solvmanifold cannot admit any left-invariant closed $\mathrm{G}_2$-structure such that its induced metric is Einstein, unless it is parallel.


Up to this point, a question that naturally arises is the following:
which classes of $\mathrm{G}_2$-structures can induce an Einstein metric?
Our goal in this paper is to show that one can realize most of the classes of $\mathrm{G}_2$-structures
with Einstein induced metric of negative, null or positive scalar curvature (see Table~\ref{tabla-resumen} and Theorem~\ref{resumenG2}). We also study the analogous problem for $\mathrm{Spin}(7)$ manifolds (see Table~\ref{tabla-resumen2} and Theorem~\ref{resumenSpin7}).
For the construction of such structures, we will consider Einstein warped $\mathrm{G}_2$, resp. $\mathrm{Spin}(7)$, manifolds with fiber an Einstein $\mathrm{SU}(3)$,
resp. $\mathrm{G}_2$ manifold. Next we explain in more detail the contents of the paper.

\smallskip

In Section~\ref{sec-SU(3)} we recall some well known results about $\mathrm{SU}(3)$-structures
$(\omega, \psi_+)$ on a 6-dimensional manifold $L$,
as the description of the scalar curvature of the induced metric $g_{\omega, \psi_+}$ and the principal
classes of $\mathrm{SU}(3)$-structures in terms of their torsion forms~\cite{BV,CS}.
Section~\ref{sec-G2} is devoted to general results about $\mathrm{G}_2$-structures $\varphi$ on a 7-dimensional manifold $M$
following~\cite{Br,CS}. We also recall the sixteen Fern\'andez-Gray $\mathrm{G}_2$-classes
$\mathcal{P}$, $\mathcal{X}_i$, $\mathcal{X}_i \oplus \mathcal{X}_j$,
$\mathcal{X}_i \oplus \mathcal{X}_j \oplus \mathcal{X}_k$
and $\mathcal{X}=\mathcal{X}_1 \oplus \mathcal{X}_2 \oplus \mathcal{X}_3\oplus \mathcal{X}_4$,
as well as their description in terms of the torsion forms $\tau_0, \tau_1, \tau_2, \tau_3$
of the $\mathrm{G}_2$-structure.
In Section~\ref{subsec-warped-G2}, a class of $\mathrm{G}_2$-structures
on warped products $M=I_f \times L$ with fiber an $\mathrm{SU}(3)$ manifold $L$ is considered,
which provides a natural extension of the well-known usual, exponential and sine
cones (see Proposition~\ref{prop-3-form}).
Different constructions of $G$-structures based on warped products or cones have been studied by many authors
(see for instance \cite{AChFrH,Bar,Besse,BoyerGalicki,CI,FIMU,FR1} and the references therein).
We obtain in Theorem~\ref{torsiones} an explicit description of the torsion forms
of the warped $\mathrm{G}_2$-structure in terms of the torsion forms of the $\mathrm{SU}(3)$-structure
and the warping function $f$.

Our goal in Section~\ref{sec-Einstein-G2} is to construct Einstein 7-manifolds in the different $\mathrm{G}_2$-classes
by means of warped products of certain Einstein $\mathrm{SU}(3)$ manifolds.
In this way explicit Einstein examples with scalar curvatures of different signs are obtained.
In Section~\ref{sec-main-classes} we focus on the principal classes of $\mathrm{G}_2$ manifolds,
giving characterizations for the existence of a parallel, nearly parallel or Einstein locally conformal parallel
warped $\mathrm{G}_2$-structure in terms of the $\mathrm{SU}(3)$ geometry of the fiber.
Such $\mathrm{G}_2$-structures correspond to the classes $\mathcal{P}$, $\mathcal{X}_1$ and $\mathcal{X}_4$, respectively.
For the $\mathrm{G}_2$-class $\mathcal{X}_2\oplus\mathcal{X}_3$ it is proved that
if a warped $\mathrm{G}_2$ manifold $M$ is Einstein then it is parallel
(see Proposition~\ref{2+3-G2}), in particular the
$\mathrm{G}_2$-analogue of the Goldberg conjecture holds for warped $\mathrm{G}_2$ manifolds,
as closed $\mathrm{G}_2$ manifolds constitute the class $\mathcal{X}_2$.

In Section~\ref{sec-coclosed} we obtain Einstein coclosed $\mathrm{G}_2$-structures,
i.e. in the class $\mathcal{X}_1 \oplus \mathcal{X}_3$, on warped products of $\mathrm{SU}(3)$ manifolds of type
$\mathcal{W}_1^+ \oplus \mathcal{W}_1^- \oplus \mathcal{W}_3$, and
apply the construction to the manifold $S^3 \times S^3$ endowed with one of the $\mathrm{SU}(3)$-structures found in~\cite{Sc}.
In Section~\ref{sec-coupled} we construct Einstein $\mathrm{G}_2$ manifolds in different classes starting with a $6$-manifold
endowed with a coupled structure.
Coupled $\mathrm{SU}(3)$-structures were first introduced in \cite{Sal-Milan} and have torsion class $\mathcal{W}_1^-\oplus \mathcal{W}_2^-$,
so they are half-flat and generalize the nearly K\"ahler structures. The twistor space
$\mathcal{Z}$ over a self-dual Einstein $4$-manifold has an Einstein coupled $\mathrm{SU}(3)$-structure~\cite{Tomasiello}, which is used
in~\cite{FR1} to construct a Ricci-flat locally conformal closed $\mathrm{G}_2$ manifold,
i.e. in the class $\mathcal{X}_2 \oplus \mathcal{X}_4$ 
(see~\cite{FR2} for Einstein solvmanifolds in this class with negative scalar curvature).
In Theorems~\ref{coupled-alpha-beta-constantes}, \ref{coupled-alpha-beta-no-constantes}
and~\ref{2+3+4} we construct Einstein $\mathrm{G}_2$ manifolds of negative, null and positive scalar curvature
in the classes $\mathcal{X}_2 \oplus \mathcal{X}_4$, $\mathcal{X}_1 \oplus \mathcal{X}_2\oplus \mathcal{X}_3$,
$\mathcal{X}_1 \oplus \mathcal{X}_3\oplus \mathcal{X}_4$ and $\mathcal{X}_2 \oplus \mathcal{X}_3\oplus \mathcal{X}_4$.
An Einstein $6$-solvmanifold $S$, of negative scalar curvature, is considered in Section~\ref{sec-solvmanifolds} to obtain an Einstein
$\mathrm{G}_2$ manifold on the hyperbolic cosine cone over $S$.

Motivated by the classification problem studied in~\cite{CMS}, in Section~\ref{sec-classification-G2}
we realize most of the $\mathrm{G}_2$-classes
in the Einstein setting with scalar curvature of different signs (see Theorem~\ref{resumenG2}).
More concretely, in the Ricci flat case and in the case of positive scalar curvature,
there exist Einstein warped $\mathrm{G}_2$-structures of every admissible strict type, except possibly for $\mathcal{X}_1\oplus\mathcal{X}_2\oplus\mathcal{X}_4$.
On the other hand, there are Einstein warped $\mathrm{G}_2$-structures with negative scalar curvature of every admissible strict type, except for $\mathcal{X}_2$, $\mathcal{X}_3$, $\mathcal{X}_2\oplus\mathcal{X}_3$, and possibly for $\mathcal{X}_1\oplus\mathcal{X}_2\oplus\mathcal{X}_4$. Table~\ref{tabla-resumen} shows concrete Einstein examples, when they exist, in the different $\mathrm{G}_2$-classes
together with information on the $\mathrm{SU}(3)$ geometry of the fibers.
At the end of Section~\ref{sec-classification-G2}, explicit families of Einstein $\mathrm{G}_2$-structures
with identical Riemannian metric
but having different $\mathrm{G}_2$ type are given (see \cite{AChFrH,Br,Grigorian,spiro,Lin} for related results).

Section~\ref{sec-Spin(7)} is devoted to warped $\mathrm{Spin}(7)$ manifolds $(N=I_f \times M,\phi)$
with fiber a $\mathrm{G}_2$ manifold $(M,\varphi)$.
In Theorem~\ref{torsionesSpin7} we describe the torsion forms $\lambda_1, \lambda_5$ of the $\mathrm{Spin}(7)$-structure $\phi$
in terms of the torsion forms of the fiber, which allows to give characterizations for the existence of a parallel
or an Einstein locally conformal parallel
warped $\mathrm{Spin}(7)$-structure in terms of the $\mathrm{G}_2$ geometry of the fiber.
In Section~\ref{sec-Einstein-Spin(7)} Einstein 8-manifolds in the different $\mathrm{Spin}(7)$-classes, i.e.
$\mathcal{P}$, $\mathcal{Y}_1$, $\mathcal{Y}_2$ and the general class $\mathcal{Y}=\mathcal{Y}_1 \oplus \mathcal{Y}_2$, 
are constructed.
For zero or positive scalar curvatures, there are Einstein warped $\mathrm{Spin}(7)$-structures of every admissible strict type, whereas
for negative scalar curvature there are Einstein warped $\mathrm{Spin}(7)$-structures of every admissible strict type, except for $\mathcal{Y}_2$
(see Theorem~\ref{resumenSpin7} and Table~\ref{tabla-resumen2}).
\end{section}


\begin{section}{$\mathrm{SU}(3)$-structures}\label{sec-SU(3)}

\noindent
An $\mathrm{SU}(3)$-structure on a 6-dimensional manifold $L$ consists of a triple $(g, J, \Psi)$
such that $g$ is a Riemannian metric, $J$ is an almost complex structure compatible with the metric,
and $\Psi$ is a complex volume form satisfying
\begin{equation*}
\frac{3}{4}i\, \Psi \wedge \overline{\Psi} = \omega^3,
\end{equation*}
where $\omega$ is the
fundamental form associated to the almost Hermitian structure $(g, J)$.
Note that an $\mathrm{SU}(3)$-structure on a 6-dimensional manifold $L$ can be described by
the pair $(\omega, \psi_+)$, where $\psi_+$ is
the real part of the complex volume form $\Psi$.
Indeed, for the imaginary part $\psi_-$ of the form $\Psi$ one has that $\psi_-=J \psi_+$,
so $\psi_-$ is determined by $\psi_+$ and the almost complex structure $J$
(see \cite{Hitchin}).
We will denote by $g_{\omega, \psi_+}$ the Riemannian metric induced by the $\mathrm{SU}(3)$-structure.

As it is described in \cite{BV}, the intrinsic torsion of an $\mathrm{SU}(3)$-structure can be given in terms of the derivatives of the forms $\omega$, $\psi_+$ and $\psi_-$. Consider the natural action of the group $\mathrm{SU}(3)$ on the spaces $\Omega^p(L)$ of differential $p$-forms on $L$,
and more concretely, the $\mathrm{SU}(3)$ irreducible subspaces of $\Omega^2(L)$ and $\Omega^3(L)$.
One has the following decompositions \cite{BV,CS}:
\begin{equation*}
\Omega^2(L) = \Omega^2_1(L) \oplus \Omega^2_6(L) \oplus \Omega^2_{8}(L),
\end{equation*}
where
\begin{equation*}
\begin{aligned}
\Omega^2_1(L) & = \{f\,\omega \mid f \in \mathcal{C}^{\infty}(L) \}, \\[4pt]
\Omega^2_6(L) & = \{ \ast_6 J (\alpha \wedge \psi_+) \mid \alpha \in \Omega^1(L) \} = \{ \beta \in \Omega^2(L) \mid J\beta=-\beta \}, \\[4pt]
\Omega^2_{8}(L) & = \{ \beta \in \Omega^2(L) \mid \beta \wedge \psi_+=0,\  \ast_6 J \beta = -\beta \wedge \omega \}
= \{ \beta \in \Omega^2(L) \mid J \beta = \beta,\ \beta \wedge \omega^2=0 \},
\end{aligned}
\end{equation*}
and
\begin{equation*}
\Omega^3(L) = \Omega^3_{1_+}(L) \oplus \Omega^3_{1_-}(L) \oplus \Omega^3_{6}(L) \oplus \Omega^3_{12}(L)
\end{equation*}
with
\begin{equation*}
\begin{aligned}
\Omega^3_{1_\pm}(L) & = \{f\, \psi_{\pm}  \mid f \in \mathcal{C}^{\infty}(L) \}, \\[4pt]
\Omega^3_6(L) & = \{ \alpha \wedge \omega \mid \alpha \in \Omega^1(L) \} = \{ \gamma \in \Omega^3(L) \mid  \ast_6 J\gamma=\gamma \},\\[4pt]
\Omega^3_{12}(L) & = \{ \gamma \in \Omega^3(L) \mid \gamma \wedge \omega =0, \ \gamma \wedge \psi_{\pm}=0 \}.
\end{aligned}
\end{equation*}
Here, $\ast_6$ denotes the Hodge star operator, and
$\Omega^p_k(L)$ is the $\mathrm{SU}(3)$ irreducible space of $p$-forms of dimension $k$ at every point.
The decomposition on the other degrees is obtained via the isomorphism described by the Hodge star operator $\ast_6$,
i.e. $\ast_6\, \Omega^p_k(L) \cong \Omega^{6-p}_k(L)$.

Thus, the differentials of $\omega, \psi_+$ and $\psi_-$ can be decomposed into summands belonging
to the $\mathrm{SU}(3)$ invariant spaces as follows:
\begin{equation}\label{SU3torsion}
\begin{aligned}
& d\omega  = -\frac{3}{2}\sigma_0\, \psi_++\frac{3}{2}\pi_0\, \psi_-+\nu_1 \wedge \omega + \nu_3, \\[4pt]
& d\psi_+  = \pi_0\, \omega^2+\pi_1 \wedge \psi_+ - \pi_2 \wedge \omega, \\[4pt]
& d\psi_-  = \sigma_0\, \omega^2+ \pi_1 \wedge \psi_- - \sigma_2 \wedge \omega, \\
\end{aligned}
\end{equation}
where $\sigma_0, \pi_0 \in \mathcal{C}^\infty(L)$, $\pi_1, \nu_1 \in \Omega^1(L)$, $\pi_2, \sigma_2 \in \Omega^2_8(L)$
and $\nu_3 \in \Omega^3_{12}(L)$ are called the \emph{torsion forms}.
Note that in the last equality, $\pi_1 \wedge \psi_- = J\pi_1 \wedge \psi_+$ accordingly to \cite{BV}.

Bedulli and Vezzoni derived the Ricci tensor of the metric $g_{\omega, \psi_+}$ induced by the $\mathrm{SU}(3)$-structure in terms of the torsion forms.
In \cite[Theorem 3.4]{BV}, they find the following expression for the scalar curvature:
\begin{equation}\label{scal6}
Scal(g_{\omega, \psi_+}) =  \frac{15}{2} \pi_0^2+ \frac{15}{2} \sigma_0^2+2d^{\ast_6}\pi_1+2d^{\ast_6}\nu_1-|\nu_1|^2
- \frac{1}{2}|\sigma_2|^2- \frac{1}{2}|\pi_2|^2- \frac{1}{2} |\nu_3|^2+4\langle \pi_1, \nu_1 \rangle.
\end{equation}
Here, $d^{\ast_6}$ denotes the codifferential, i.e. the adjoint of the exterior derivative with respect to the metric.

As it is described in \cite{CS} the torsion of an $\mathrm{SU}(3)$-structure, namely $T$, lies in the space
\begin{equation*}
T \in \mathcal{W}_1^{\pm} \oplus \mathcal{W}_2^{\pm} \oplus \mathcal{W}_3 \oplus \mathcal{W}_4 \oplus \mathcal{W}_5,
\end{equation*}
where $\mathcal{W}_i$ are the irreducible components under the action of the group $\mathrm{SU}(3)$.
The spaces $\mathcal{W}_i$ are related to the torsion forms by Table~\ref{classes}.

\begin{table}[h]\label{classes}
\caption{\textbf{Principal classes of $\mathrm{SU}(3)$-structures}}
\begin{center}
  \begin{tabular}{ |c | c |}
    \hline
    Class & Non-zero torsion form \\ \hline \hline
     $\{0\}$ & --  \\ \hline
    $\mathcal{W}_1^+ $ & $\pi_0$  \\ \hline
    $\mathcal{W}_1^- $ & $\sigma_0$  \\ \hline
    $\mathcal{W}_2^+ $ & $\pi_2$  \\ \hline
    $\mathcal{W}_2^- $ & $\sigma_2$  \\ \hline
    $\mathcal{W}_3 $ & $  \nu_3 $  \\  \hline
    $\mathcal{W}_4 $ & $\nu_1$ \\ \hline
    $\mathcal{W}_5 $ & $\pi_1$ \\ \hline
  \end{tabular}
\end{center}
\end{table}

Hence, torsion forms provide a useful tool to describe the principal classes of $\mathrm{SU}(3)$-structures. For instance, $\mathrm{SU}(3)$-structures with zero torsion are called integrable, or Calabi-Yau, their holonomy is contained in $\mathrm{SU}(3)$ and they are Ricci flat.
The $\mathrm{SU}(3)$-structures in the class $\mathcal{W}_1^-$ are nearly K\"ahler. They are Einstein and
all the torsion forms vanish except for $\sigma_0$.
There are only finitely many homogeneous nearly K\"ahler manifolds \cite{Butruille} and new complete inhomogeneous
examples on $S^6$ and $S^3 \times S^3$ are found recently in~\cite{FosHan}.
Other well known $\mathrm{SU}(3)$-structures are the half-flat structures, for which
$\pi_0=\pi_1=\nu_1=\pi_2=0$, and the nearly half-flat structures, characterized by $\pi_1=\nu_1=\sigma_2=0$.
Half-flat structures were first considered in~\cite{Hitchin-Bilbao} (see also~\cite{CS}) and
the class of nearly half-flat structures was introduced in~\cite{FIMU}, and these structures
can be evolved to a parallel and to a nearly parallel $\mathrm{G}_2$-structure, respectively.

In this paper the $\mathrm{SU}(3)$-structures in the classes $\mathcal{W}_1^+ \oplus \mathcal{W}_1^- \oplus \mathcal{W}_3$
and $\mathcal{W}_1^-\oplus \mathcal{W}_2^-$ will play a role
in the construction of Einstein $\mathrm{G}_2$ manifolds (see Sections~\ref{sec-coclosed} and~\ref{sec-coupled}).
The structures in the first class are characterized by $\pi_1=\nu_1=\pi_2=\sigma_2=0$, and the structures
in the second class are known as coupled $\mathrm{SU}(3)$-structures.
Coupled $\mathrm{SU}(3)$-structures were first introduced in \cite{Sal-Milan} (see also \cite{FR1})
and they are characterized by the condition $d \omega = -\frac{3}{2}\sigma_0\, \psi_+$,
which is equivalent to the vanishing of all the torsion forms except $\sigma_0$ and $\sigma_2$.
Thus, coupled structures are half-flat and they generalize the nearly K\"ahler structures.

\smallskip

We end this section recalling some well-known identities concerning $\mathrm{SU}(3)$-structures that will be useful in the next sections.

\begin{lemma}\label{lemmaprop}
Consider an $\mathrm{SU}(3)$-structure $(\omega, \psi_+, \psi_-)$ on a $6$-manifold $L$.
Then, for any $1$-form $\tau \in \Omega^1(L)$ the following identities hold:
\begin{itemize}
\item $\ast_6(\tau \wedge \omega) \wedge \omega= \ast_6(\tau \wedge \psi_+)\wedge \psi_+=
\ast_6(\tau \wedge \psi_-)\wedge \psi_-= 2 \ast_6 \tau$,
\item $\ast_6(\tau \wedge \psi_+)\wedge \psi_-= -\ast_6(\tau \wedge \psi_-)\wedge \psi_+= - \tau \wedge \omega^2$.
\end{itemize}
\end{lemma}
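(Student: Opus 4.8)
The plan is to reduce both identities to a pointwise computation in a suitably adapted coframe. Both sides of every equality are $\mathcal{C}^\infty(L)$-linear in $\tau$ and are built from pointwise tensorial operations, so it suffices to fix a point $p\in L$ and to verify the identities for $\tau$ running over a basis of $T_p^*L$. In fact a single test covector is enough: the forms $\omega,\psi_+,\psi_-$ and the operator $\ast_6$ are all $\mathrm{SU}(3)$-invariant, and $\mathrm{SU}(3)$ acts transitively on the unit sphere of $T_p^*L\cong\mathbb{R}^6$, so if the identities hold for one unit covector they hold for all of them, hence for every $\tau$ by homogeneity.

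Before doing the computation I would use the compatible almost complex structure to cut the work in half. Recall that $J$ acts on $\Lambda^\bullet T_p^*L$ as an orientation-preserving isometry, so it is multiplicative for $\wedge$ and commutes with $\ast_6$; moreover $J\omega=\omega$, while $J\psi_+=\psi_-$ and $J\psi_-=-\psi_+$ because $\Psi=\psi_++i\psi_-$ is of type $(3,0)$, on which $J$ acts as $i^{3}=-i$. Applying $J$ to the $\psi_+$-version of each identity and substituting these relations, the $\psi_-$-versions follow at once; thus the two middle terms of the second bullet are automatically equal, and the last equality of the first bullet follows from the second. It therefore only remains to establish, for one test $1$-form,
\[
\ast_6(\tau\wedge\omega)\wedge\omega=2\ast_6\tau,\qquad \ast_6(\tau\wedge\psi_+)\wedge\psi_+=2\ast_6\tau,\qquad \ast_6(\tau\wedge\psi_+)\wedge\psi_-=-\,\tau\wedge\omega^2.
\]

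For these I would take an $\mathrm{SU}(3)$-adapted orthonormal coframe $e^1,\dots,e^6$ at $p$ with
\[
\omega=e^{12}+e^{34}+e^{56},\qquad \psi_+=e^{135}-e^{146}-e^{236}-e^{245},\qquad \psi_-=e^{136}+e^{145}+e^{235}-e^{246},
\]
(writing $e^{ij\dots}$ for $e^i\wedge e^j\wedge\cdots$), where the orientation and the relative signs are normalized so that $\tfrac34 i\,\Psi\wedge\overline{\Psi}=\omega^3$, equivalently $\psi_+\wedge\psi_-=\tfrac23\omega^3=4\,e^{123456}$. Choosing $\tau=e^1$ one finds $e^1\wedge\omega=e^{134}+e^{156}$, hence $\ast_6(e^1\wedge\omega)=e^{256}+e^{234}$, and wedging with $\omega$ gives $2e^{23456}=2\ast_6 e^1$; likewise $e^1\wedge\psi_+=-e^{1236}-e^{1245}$, hence $\ast_6(e^1\wedge\psi_+)=-e^{45}-e^{36}$, and wedging this with $\psi_+$, resp.\ $\psi_-$, produces $2e^{23456}=2\ast_6 e^1$, resp.\ $-2e^{13456}=-e^1\wedge\omega^2$. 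Combined with the first two paragraphs, this proves the lemma.

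None of this presents a genuine obstacle; the one place requiring attention is the sign bookkeeping in the Hodge star and in the normal forms of $\psi_\pm$, which must all be referred to the same orientation. As an alternative that sidesteps the explicit frame, one can observe that for $\alpha,\beta\in\{\omega,\psi_+,\psi_-\}$ the assignment $\tau\mapsto\ast_6\big(\ast_6(\tau\wedge\alpha)\wedge\beta\big)$ is an $\mathrm{SU}(3)$-equivariant endomorphism of $\Omega^1$; since $\Omega^1\otimes\mathbb{C}=\Lambda^{1,0}\oplus\Lambda^{0,1}$ is a sum of two non-isomorphic irreducible $\mathrm{SU}(3)$-modules that are complex conjugate to one another, Schur's lemma forces every such endomorphism to have the form $a\,\mathrm{Id}+b\,J$ with $a,b\in\mathbb{R}$, and the two constants are then pinned down by a single evaluation.
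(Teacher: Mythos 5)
Your proposal is correct and follows essentially the same route as the paper: both verify the identities pointwise in an $\mathrm{SU}(3)$-adapted orthonormal coframe with the standard normal forms of $\omega,\psi_\pm$ and conclude by direct computation (the paper simply takes a generic $\tau=\sum a_ie^i$ and omits the details). Your reductions via transitivity of $\mathrm{SU}(3)$ on the unit sphere and via the $J$-action are sound refinements that shrink the calculation to the single covector $e^1$, and the explicit evaluations you record are all correct.
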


\begin{proof}
Let $\{e^1, \dots, e^6\}$ be a basis adapted to the $\mathrm{SU}(3)$-structure, i.e. a local
orthonormal basis
such that the forms $\omega, \psi_+$ and $\psi_-$ have the following expressions
$$
\omega  = e^{12}+e^{34}+e^{56}, \quad\ \
\psi_+  = e^{135}-e^{146}-e^{236}-e^{245}, \quad\ \
\psi_-  = e^{136}+e^{145}+e^{235}-e^{246}.
$$
Here we denote by $e^{ij}$, resp. $e^{ijk}$, the wedge product $e^i \wedge e^j$, resp. $e^i \wedge e^j \wedge e^k$.
Now, a generic 1-form on $L$ can be written locally as $\tau=\sum_{i=1}^7a_i e^i$,
with $a_i \in \mathcal{C}^\infty(L)$, and the result follows by a direct calculation.
\end{proof}

\end{section}


\begin{section}{$\mathrm{G}_2$-structures}\label{sec-G2}

\noindent
A $\mathrm{G}_2$-structure on a 7-dimensional manifold $M$ consists in a reduction of the structure group of its frame bundle to the Lie group $\mathrm{G}_2$. Equivalently, the existence of such structure can be characterized by the existence of a global
non-degenerate 3-form $\varphi$ on $M$ which can be locally written as
\begin{equation}\label{G2can}
\varphi= \, e^{127}+e^{347}+e^{567}+e^{135}-e^{146}-e^{236}-e^{245},
\end{equation}
where $\{e^1,\dots, e^7\}$ is a local basis of 1-forms on $M$.
The presence of a $\mathrm{G}_2$-structure on a manifold defines a Riemannian metric $g_{\varphi}$ which can be obtained as
\begin{equation*}
g_{\varphi}(e_i, e_j) = \frac{1}{6} \sum_{k\,l} \varphi(e_i, e_k, e_l)\, \varphi(e_j, e_k, e_l).
\end{equation*}

Let $(M,\varphi)$ be a $\mathrm{G}_2$ manifold. Then, the group $\mathrm{G}_2$ acts on the space $\Omega^p(M)$ of differential $p$-forms
on the manifold $M$. This action is irreducible on $\Omega^1(M)$ and $\Omega^6(M)$, but it is reducible for $\Omega^p(M)$ with $2 \leq p \leq 5$.
Since the Hodge star operator $\ast_7$ induces an isomorphism between the spaces of $p$-forms and $(7-p)$-forms on $M$,
we only need to describe the decompositions for $p=2$ and $3$. In~\cite{Br} it is shown that
the $\mathrm{G}_2$ irreducible decompositions for $p=2$ and $3$ are
\begin{equation*}
\Omega^2(M) = \Omega^2_7(M) \oplus \Omega^2_{14}(M),
\end{equation*}
where
\begin{equation*}
\begin{aligned}
\Omega^2_7(M) & = \{\ast_7(\alpha \wedge \ast_7\varphi) \mid \alpha \in \Omega^1(M) \}, \\
\Omega^2_{14}(M) & = \{ \beta \in \Omega^2(M) \mid \beta \wedge \varphi = - \ast_7\beta \}
= \{ \beta \in \Omega^2(M) \mid \beta \wedge \ast_7\varphi = 0 \},
\end{aligned}
\end{equation*}
and
\begin{equation*}
\Omega^3(M) = \Omega^3_1(M) \oplus \Omega^3_{7}(M)  \oplus \Omega^3_{27}(M),
\end{equation*}
with
\begin{equation*}
\begin{aligned}
\Omega^3_1(M) & = \{f \varphi \mid f \in \mathcal{C}^\infty(M)  \}, \\
\Omega^3_7(M) & = \{\ast_7(\alpha \wedge \varphi) \mid \alpha \in \Omega^1(M) \}, \\
\Omega^3_{27}(M) & = \{ \gamma \in \Omega^3(M) \mid \gamma \wedge \varphi =0, \ \gamma \wedge \ast_7\varphi =0  \},
\end{aligned}
\end{equation*}
where $\Omega^p_{k}(M)$ denotes a $\mathrm{G}_2$ irreducible space of $p$-forms of dimension $k$ at every point.
Note that the description on the other degrees are obtained via the isomorphism described by the Hodge star operator,
i.e. $\ast_7\,\Omega^p_{k}(M) \cong \Omega^{7-p}_{k}(M)$.

As it is pointed out in \cite{Br},
it is useful to recognize the scaling factors that the isomorphisms between these $\mathrm{G}_2$ irreducible spaces introduce. For example, for any $\kappa \in \Omega^1(M)$ one has
\begin{equation}\label{G2prop}
\begin{aligned}
\ast_7\big(\ast_7(\kappa \wedge \varphi)\wedge \varphi\big)= & -4 \kappa, \\[2pt]
\ast_7\big( \ast_7 ( \kappa \wedge \ast_7 \varphi) \wedge \ast_7 \varphi \big) =& \ 3\kappa.
\end{aligned}
\end{equation}


\smallskip

The $\mathrm{G}_2$ type decomposition of forms on $M$ allows to express the exterior derivative of $\varphi$ and $\ast_{7} \varphi$ as follows
\begin{equation}\label{torsionforms}
\begin{aligned}
d\varphi = & \,\, \tau_0 \ast_7\!\varphi + 3\, \tau_1 \wedge \varphi + \ast_7\, \tau_3,\\[2pt]
d \ast_7\!\varphi = & \,\, 4\, \tau_1 \wedge \ast_7 \varphi + \tau_2 \wedge \varphi ,
\end{aligned}
\end{equation}
where $\tau_0 \in \mathcal{C}^\infty(M)$, $\tau_1 \in \Omega^1(M)$, $\tau_2 \in \Omega^2_{14}(M)$ and $\tau_3 \in \Omega^3_{27}(M)$ are called the \emph{torsion forms} of the $\mathrm{G}_2$-structure.

According to \cite{FG} the covariant derivative of $\varphi$ can be decomposed into four irreducible components, namely $X_1, X_2, X_3$ and $X_4$.
Thus, a $\mathrm{G}_2$-structure is said to be of type $\mathcal{P}, \mathcal{X}_i, \mathcal{X}_i \oplus \mathcal{X}_j, \mathcal{X}_i \oplus \mathcal{X}_j \oplus \mathcal{X}_k $ or $\mathcal{X}$ if the covariant derivative $\nabla^{g_\varphi} \varphi$ lies in $\{0\}, X_i, X_i \oplus X_j, X_i \oplus X_j \oplus X_k $ or $X=X_1 \oplus X_2 \oplus X_3 \oplus X_4$, respectively.
Hence, there exist 16 different classes of $\mathrm{G}_2$-structures.
These classes can be described in terms of the behavior of the torsion forms $\tau_0, \tau_1, \tau_2, \tau_3$~\cite{CS}.
In Table~\ref{classes-G2}
the principal Fern\'andez-Gray classes of $\mathrm{G}_2$-structures are given.

\bigskip

\begin{table}[h]\label{classes-G2}
\caption{\textbf{Principal classes of $\mathrm{G}_2$-structures}}
\begin{center}
  \begin{tabular}{ |c | c | l |}
    \hline
    Class & Torsion forms & Structure\\ \hline \hline
    $\mathcal{P}$ & $\tau_0= \tau_1= \tau_2 = \tau_3 = 0$ & Parallel \\ \hline
    $\mathcal{X}_1$ & $\tau_1= \tau_2 = \tau_3 = 0$ & Nearly parallel \\ \hline
    $\mathcal{X}_2$ & $\tau_0= \tau_1 = \tau_3 = 0$ & Closed \\ \hline
    $\mathcal{X}_3$ & $\tau_0= \tau_1 = \tau_2 = 0$ & Coclosed of pure type \\ \hline
    $\mathcal{X}_4$ & $\tau_0= \tau_2 = \tau_3 = 0$ & Locally conformal parallel \\ \hline
    $\mathcal{X}_1 \oplus \mathcal{X}_3$ & $ \tau_1 = \tau_2 = 0 $ & Coclosed  \\
    \hline
  \end{tabular}
\end{center}
\end{table}

Hence, torsion forms constitute a useful tool to describe different $\mathrm{G}_2$-structures.
Moreover, as it was shown by Bryant in \cite{Br}, one can also describe the scalar curvature
of a $\mathrm{G}_2$ manifold in terms of its torsion forms~by
\begin{equation}\label{scal7}
Scal(g_{\varphi})=12 \, d^{\ast_7} \tau_1 + \frac{21}{8}\, \tau^2_0 + 30\, |\tau_1|^2 -\frac{1}{2}\, |\tau_2|^2-\frac{1}{2}\, |\tau_3|^2,
\end{equation}
where $d^{\ast_7}$ is the codifferential with respect to the metric $g_{\varphi}$ on $M$.

The geometry of $\mathrm{G}_2$-structures in the different classes above has been studied by many authors.
Parallel $\mathrm{G}_2$ manifolds have holonomy in $\mathrm{G}_2$ and they are Ricci-flat. Examples of
manifolds with $\mathrm{G}_2$ holonomy are constructed in \cite{Br0,BS,Joyce1}.
On the other hand, any (strict) nearly parallel $\mathrm{G}_2$ manifold is Einstein with positive scalar
curvature \cite{Friedrich-K-M-S}. The classification of $\mathrm{G}_2$ manifolds, initiated in~\cite{FG},
was completed in~\cite{CMS} both in the non-compact and compact cases.
In Section~\ref{sec-classification-G2}  we realize most of the $\mathrm{G}_2$-classes
in the Einstein setting with scalar curvature of different signs.

\end{section}


\begin{section}{Warped $\mathrm{G}_2$-structures}\label{subsec-warped-G2}

\noindent In this section we consider a class of $\mathrm{G}_2$-structures on warped products with fiber an $\mathrm{SU}(3)$ manifold, and
we obtain an explicit description of the torsion forms
of the warped $\mathrm{G}_2$-structure in terms of the torsion forms of the $\mathrm{SU}(3)$-structure.

The presence of an $\mathrm{SU}(3)$-structure on a 6-dimensional manifold provides a way to obtain 7-dimensional manifolds endowed with $\mathrm{G}_2$-structures. Indeed, consider $L$ a 6-dimensional manifold endowed
with an $\mathrm{SU}(3)$-structure $(\omega, \psi_+, \psi_-)$.
Let $M$ be the Riemannian product $M=\mathbb{R} \times L$, and denote by
$$
p\colon M \longrightarrow \mathbb{R},\quad\quad q\colon M \longrightarrow L,
$$
the projections. Then, the 3-form
$$
\varphi =  q^*(\omega) \wedge p^*(dt) + q^*(\psi_+),
$$
where $t$ is the coordinate on $\mathbb{R}$, defines a $\mathrm{G}_2$-structure on $M$.
In the following, we will identify $\omega, \psi_+$ and $\psi_-$ with their pullbacks onto $M$.

We will consider a slightly more general class of $\mathrm{G}_2$-structures
given by the warped product construction.
Let $(B, g_B)$ and $(F, g_F)$ be two Riemannian manifolds, and let $f$ be a nowhere vanishing
differentiable function on $B$.
In this paper we suppose that $f$ is never a constant function.
Denote by $p$ and $q$ the projections of $B \times F$ onto $B$ and $F$,
respectively. Recall that the warped product, namely $M = B \times_f F$, is the product manifold $B \times F$
endowed with the metric $g$ given by
\begin{equation*}
g = f^2\, q^*(g_F) +  p^*(g_B).
\end{equation*}
The manifold $B$ is called the base of $M$, $F$ the fiber, and the warped product is called trivial
if $f$ is a constant function.

In what follows, we consider $F=L$ and a 1-dimensional base $B$. More concretely, $B=I_f \subset \mathbb{R}$ is an open interval
where the function $f(t)$ does not vanish. In the next result we introduce the class of $\mathrm{G}_2$-structures
that will be studied.

\begin{proposition}\label{prop-3-form}
Let $(L, \omega, \psi_+, \psi_-)$ be an $\mathrm{SU}(3)$ manifold and consider functions
$f, \alpha, \beta \colon I_f \longrightarrow \mathbb{R}$, with $\alpha^2(t) + \beta^2(t) =1$.
Then, the form on $M = I_f \times L$ given by
\begin{equation}\label{3-form}
\varphi=   f^2(t)\, \omega \wedge dt + f^3(t)\big( \alpha(t) \psi_+ - \beta(t) \psi_-  \big)\\
\end{equation}
defines a family of $\mathrm{G}_2$-structures whose induced metric is
\begin{equation*}
g_{\varphi}=f^2(t)\,g_{\omega,\psi_+}+dt^2.
\end{equation*}
\end{proposition}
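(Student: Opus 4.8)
The plan is to prove the statement pointwise and locally: I will exhibit an explicit local coframe $\{\bar e^{1},\dots,\bar e^{7}\}$ on $M=I_f\times L$ in which $\varphi$ takes exactly the canonical shape \eqref{G2can}. This simultaneously shows that $\varphi$ defines a reduction of the structure group to $\mathrm{G}_2$ (since \eqref{G2can} is a stable $3$-form) and, by the formula for $g_\varphi$, that the induced metric is the asserted warped metric.

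First I would fix, around any point of $L$, a local orthonormal coframe $\{e^{1},\dots,e^{6}\}$ adapted to the $\mathrm{SU}(3)$-structure, so that $\omega=e^{12}+e^{34}+e^{56}$, $\psi_+=e^{135}-e^{146}-e^{236}-e^{245}$ and $\psi_-=e^{136}+e^{145}+e^{235}-e^{246}$, exactly as in the proof of Lemma~\ref{lemmaprop}. The key point is that the condition $\alpha^{2}(t)+\beta^{2}(t)=1$ lets a rotation in the $(e^{1},e^{2})$-plane absorb the coefficients $\alpha,\beta$: setting
\[
\tilde e^{1}=\alpha\, e^{1}-\beta\, e^{2},\qquad \tilde e^{2}=\beta\, e^{1}+\alpha\, e^{2},\qquad \tilde e^{i}=e^{i}\quad(3\le i\le 6),
\]
a direct expansion gives $\tilde e^{12}=e^{12}$ (so $\omega=\tilde e^{12}+\tilde e^{34}+\tilde e^{56}$ is unchanged) and
\[
\tilde e^{135}-\tilde e^{146}-\tilde e^{236}-\tilde e^{245}=\alpha\,\psi_+-\beta\,\psi_- .
\]
Conceptually this is the statement that multiplying the complex volume form $\Psi=\psi_++i\psi_-$ by the unit phase $\alpha+i\beta$, which is a $\mathrm{U}(1)\subset\mathrm{SU}(3)$ rotation, produces again an $\mathrm{SU}(3)$-structure on $L$ inducing the same metric $g_{\omega,\psi_+}$; but I would simply verify the displayed identity by splitting each monomial into its $\alpha$-part (which reconstructs a term of $\psi_+$) and its $\beta$-part (which reconstructs a term of $\psi_-$).

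Next I would introduce the warping: set $\bar e^{i}=f(t)\,\tilde e^{i}$ for $1\le i\le 6$ and $\bar e^{7}=dt$. Since $f$ is nowhere vanishing, $\{\bar e^{1},\dots,\bar e^{7}\}$ is a genuine local coframe on $M$. Substituting the above, $f^{2}(t)\,\omega\wedge dt=\bar e^{127}+\bar e^{347}+\bar e^{567}$ and $f^{3}(t)\big(\alpha\psi_+-\beta\psi_-\big)=\bar e^{135}-\bar e^{146}-\bar e^{236}-\bar e^{245}$, so
\[
\varphi=\bar e^{127}+\bar e^{347}+\bar e^{567}+\bar e^{135}-\bar e^{146}-\bar e^{236}-\bar e^{245},
\]
which is precisely \eqref{G2can} in the coframe $\{\bar e^{i}\}$. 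Hence $\varphi$ defines a $\mathrm{G}_2$-structure on $M$. Finally, since the canonical $3$-form induces via $g_{\varphi}(e_i,e_j)=\tfrac16\sum_{k,l}\varphi(e_i,e_k,e_l)\varphi(e_j,e_k,e_l)$ the metric for which its adapted coframe is orthonormal, one reads off $g_{\varphi}=\sum_{i=1}^{7}(\bar e^{i})^{2}=f^{2}(t)\sum_{i=1}^{6}(\tilde e^{i})^{2}+dt^{2}=f^{2}(t)\,g_{\omega,\psi_+}+dt^{2}$, using that the passage from $\{e^{i}\}$ to $\{\tilde e^{i}\}$ is orthogonal.

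There is no real obstacle here beyond bookkeeping: the one computation that needs care is the identity $\tilde e^{135}-\tilde e^{146}-\tilde e^{236}-\tilde e^{245}=\alpha\psi_+-\beta\psi_-$ (and its companion $\tilde e^{12}=e^{12}$), which is a short explicit check. I would also remark that the adapted coframe on $L$ is only locally defined, but since non-degeneracy of $\varphi$ and the formula for $g_\varphi$ are local assertions, working locally is enough to conclude.
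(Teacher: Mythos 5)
Your proof is correct and follows essentially the same route as the paper's: one exhibits an adapted orthonormal coframe obtained by rescaling by $f$ and absorbing $(\alpha,\beta)$ into a rotation (the paper rotates in the $(e^5,e^6)$-plane rather than the $(e^1,e^2)$-plane, which is immaterial). The only quibble is your parenthetical aside: the phase rotation $\Psi\mapsto(\alpha+i\beta)\Psi$ is induced by an element of $\mathrm{U}(3)$ stabilizing $\omega$ but not by one of $\mathrm{SU}(3)$ (which by definition fixes $\Psi$); since you verify the displayed identity directly anyway, this does not affect the argument.
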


\begin{proof}
Consider $\{e^1,\dots, e^6\}$ a local orthonormal basis of 1-forms for which the $\mathrm{SU}(3)$-structure has its canonical expression.
Then, with respect to the basis
\begin{equation*}
\{h^1,\dots,h^7\}=\{f(t)e^1,\dots,f(t)e^4,f(t)\big(\alpha(t)e^5-\beta(t)e^6\big),f(t)\big(\beta(t)e^5+\alpha(t)e^6\big), dt\}
\end{equation*}
the 3-form $\varphi$ can be written as in \eqref{G2can}, and therefore $\{h^1,\dots, h^7\}$ is a local orthonormal basis for
the metric~$g_{\varphi}$. Thus,
\begin{equation*}
g_{\varphi}=\sum_{i=1}^7 h^i \otimes h^i=f^2(t) \sum_{i=1}^6 e^i \otimes e^i+ dt \otimes  dt = f^2(t)\, g_{\omega,\psi_+} + dt^2.
\end{equation*}
\end{proof}

According to the previous proposition, if $(L,\omega, \psi_+, \psi_-)$ is an $\mathrm{SU}(3)$ manifold,
then the $\mathrm{G}_2$ manifold $M=I_f \times L$ with $\varphi$ described in \eqref{3-form} is precisely the warped product manifold
$M = I_f \times_{f} L$.
In what follows, any such $\mathrm{G}_2$-structure $\varphi$ 
will be called \emph{warped $\mathrm{G}_2$-structure},
and we will refer to the pair $(M = I_f \times L, \varphi)$ as a \emph{warped $\mathrm{G}_2$ manifold}.
Notice that the warped $\mathrm{G}_2$-structure generalizes the well-known ideas of cone and sine-cone that appear in the
literature.

\smallskip

Next we will obtain an explicit description of the torsion forms
of the warped $\mathrm{G}_2$-structure on $M=I_f \times L$ in terms of the torsion forms of the $\mathrm{SU}(3)$-structure on $L$,
the warping function $f$,
and the functions $\alpha, \beta$.
For the sake of simplicity, in the next results we will not write the $t$-dependence of the functions $f, \alpha$ and~$\beta$.

The following lemma will be useful to relate the Hodge star operators $\ast_6$ and $\ast_7$ induced
by the $\mathrm{SU}(3)$ and $\mathrm{G}_2$ structures, respectively.

\begin{lemma}\label{lemma1}
Let $\gamma \in \Omega^p(L)$ be a differential $p$-form on $L$, and let $\ast_6$ and $\ast_7$ be the Hodge star operators induced by
the structures $(\omega, \psi_+, \psi_-)$ and $\varphi$, respectively. Then,
$$
\ast_7\gamma=f^{6-2p} \ast_6\!\gamma \wedge dt,\quad\quad
\ast_7(\gamma \wedge dt)=(-1)^pf^{6-2p}\ast_6\!\gamma.
$$
\end{lemma}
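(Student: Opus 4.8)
The plan is to work in the local orthonormal coframe that adapts simultaneously to both structures. Recall from the proof of Proposition~\ref{prop-3-form} that if $\{e^1,\dots,e^6\}$ is a local orthonormal coframe on $L$ in which $\omega,\psi_+,\psi_-$ take their canonical form, then
$$
\{h^1,\dots,h^7\}=\{f e^1,\dots,f e^4,\ f(\alpha e^5-\beta e^6),\ f(\beta e^5+\alpha e^6),\ dt\}
$$
is a local orthonormal coframe for $g_\varphi$, and $\ast_7$ is the Hodge operator of $g_\varphi$ with respect to the orientation fixed by $h^1\wedge\cdots\wedge h^7$. The key point is that $\{h^1,\dots,h^6\}$ and $\{f e^1,\dots,f e^6\}$ span the same subspace of $\Omega^1$ at each point and differ by an $\mathrm{SO}(2)$ rotation in the $(e^5,e^6)$-plane, so $h^1\wedge\cdots\wedge h^6=f^6\, e^1\wedge\cdots\wedge e^6$ and the orientation induced on the $L$-factor is the standard one. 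Hence it suffices to verify the two formulas on decomposable forms $\gamma=e^{i_1\cdots i_p}$ with $i_1<\dots<i_p$, since these span $\Omega^p(L)$ pointwise.

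First I would compute $\ast_7$ on such a $\gamma$. Writing $e^{i_1\cdots i_p}=f^{-p}\,h^{i_1}\wedge\cdots\wedge h^{i_p}$ — valid after reordering, because replacing $e^5,e^6$ by their rotated versions does not change a wedge of basis $1$-forms up to the same rotation, and one checks directly that any monomial in the $e^{i}$ equals $f^{-p}$ times the corresponding monomial in the $h^{i}$ (the $\mathrm{SO}(2)$ rotation acts trivially on $e^5\wedge e^6$ and on scalars, and on a single $e^5$ or $e^6$ it mixes them, but in all cases the span and the induced volume agree) — we get, by the definition of the Hodge star in the orthonormal coframe $\{h^i\}$,
$$
\ast_7 (h^{i_1}\wedge\cdots\wedge h^{i_p})=\pm\, h^{j_1}\wedge\cdots\wedge h^{j_{6-p}}\wedge dt,
$$
where $\{j_1,\dots,j_{6-p}\}$ is the complement of $\{i_1,\dots,i_p\}$ in $\{1,\dots,6\}$ and the sign is exactly the one occurring in $\ast_6(e^{i_1\cdots i_p})=\pm\,e^{j_1\cdots j_{6-p}}$, since the $dt=h^7$ factor always sits in the last slot and contributes no extra sign relative to the six-dimensional computation. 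Converting back via $h^{j}=f\,e^{j}$ on the $j$'s gives $\ast_7\gamma=f^{-p}\cdot f^{6-p}\,(\ast_6\gamma)\wedge dt=f^{6-2p}\,\ast_6\gamma\wedge dt$, which is the first identity.

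For the second identity I would apply $\ast_7$ to $\gamma\wedge dt=f^{-p}\,h^{i_1}\wedge\cdots\wedge h^{i_p}\wedge h^7$; its Hodge dual is $\pm\, h^{j_1}\wedge\cdots\wedge h^{j_{6-p}}$, and moving the $dt=h^7$ factor from the $p$-th$+1$ position to the end introduces the sign $(-1)^{p}$ relative to the previous case, while $\ast_6\ast_6=(-1)^{p(6-p)}=\mathrm{id}$ on $\Omega^p(L)$ so no further sign appears. Collecting powers of $f$ as before yields $\ast_7(\gamma\wedge dt)=(-1)^{p}f^{6-2p}\ast_6\gamma$. Alternatively, and perhaps more cleanly, one can derive the second formula from the first by applying $\ast_7$ to both sides of $\ast_7\gamma=f^{6-2p}\ast_6\gamma\wedge dt$, using $\ast_7\ast_7=\mathrm{id}$ on $\Omega^{p+1}$ of an oriented $7$-manifold and $\ast_6\ast_6=\mathrm{id}$ on $\Omega^p(L)$, together with the fact that $f=f(t)$ is constant along $L$ so it passes through $\ast_7$ acting on forms pulled back from $L$. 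The only mild subtlety — the point I would be most careful about — is bookkeeping the signs produced by permuting the $dt$ factor past the degree-$p$ part and checking that the $\mathrm{SO}(2)$ change of coframe between the $e^i$ and $h^i$ genuinely introduces nothing; both are resolved by the observation that a rotation in the $(e^5,e^6)$-plane lies in $\mathrm{SO}(2)$, hence preserves both the metric and the orientation on the $L$-factor, so Hodge duality is insensitive to it.
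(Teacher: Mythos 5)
Your main argument is correct and is essentially the computation the paper leaves implicit: its proof is a one-liner observing that $\ast_6$ and $\ast_7$ are determined by $(g_{\omega,\psi_+},vol_6)$ and $(g_\varphi,vol_7)$ with $vol_7=f^6\,vol_6\wedge dt$, and your orthonormal-coframe calculation (including the $\mathrm{SO}(2)$-rotation remark and the sign $(-1)^{6-p}=(-1)^p$ from moving $dt$ past the $6-p$ complementary factors) just carries that out explicitly.

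One correction, though: your claim that $\ast_6\ast_6=(-1)^{p(6-p)}=\mathrm{id}$ on $\Omega^p(L)$ is false for odd $p$; on a $6$-manifold $p(6-p)\equiv p \pmod 2$, so $\ast_6\ast_6=(-1)^p$. In your direct computation this identity is not actually used, so the conclusion stands; but your proposed ``cleaner'' alternative derivation of the second formula (applying $\ast_7$ to the first identity) genuinely needs $\ast_6\ast_6=(-1)^p$ — with $\ast_6\ast_6=\mathrm{id}$ that route would drop the factor $(-1)^p$ and produce the wrong sign. Keep the direct computation as the proof and fix the parenthetical.
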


\begin{proof}
It is an immediate consequence of the definition of the Hodge star operator and the fact that $\ast_6$ and $\ast_7$ are determined,
respectively, by
$(g_{\omega,\psi_+}, vol_6=\frac{1}{6} \omega^3)$ and $(g_{\varphi}, vol_7)$, with $vol_7=f^6 vol_6\wedge dt$.
\end{proof}

\begin{proposition}\label{proposition1}
Let $\varphi$ be a warped $\mathrm{G}_2$-structure on $M= I_f \times L$.
Then,
\begin{equation*}\label{varphi}
\begin{aligned}
d  \varphi  = &  \, -f^2 \Big( \frac{3}{2} \sigma_0 + 3f' \alpha + f \alpha' \Big) \psi_+ \wedge dt  +f^2 \Big( \frac{3}{2} \pi_0 + 3f' \beta + f \beta' \Big) \psi_- \wedge dt \\
& \, +f^3 \big( \alpha\, \pi_0 - \beta\, \sigma_0 \big) \omega^2 + f^2 \nu_1 \wedge \omega \wedge dt + f^2 \nu_3 \wedge dt\\
& \, +f^3 \pi_1 \wedge (\alpha\, \psi_+ - \beta\, \psi_-) -f^3 \big( \alpha\, \pi_2 -  \beta\, \sigma_2 \big) \wedge \omega, \\[4pt]
d \ast_{7} \varphi  =  & \, f^3 \big(  2f' + \beta\, \pi_0 + \alpha\, \sigma_0 \big) \omega^2 \wedge dt + f^4 \nu_1 \wedge \omega^2 \\
& \, -f^3 \pi_1 \wedge \big( \beta\, \psi_+ - \alpha\, \psi_- \big) \wedge dt + f^3 \big( \beta\, \pi_2 + \alpha\, \sigma_2 \big) \wedge \omega \wedge dt,
\end{aligned}
\end{equation*}
where we denote by $\pi_0, \sigma_0, \pi_1, \nu_1, \pi_2, \sigma_2$ and $\nu_3$ the torsion forms
of the $\mathrm{SU}(3)$-structure $(\omega, \psi_+, \psi_+)$ on $L$.
\end{proposition}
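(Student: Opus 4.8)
The plan is to compute $d\varphi$ and $d\ast_7\varphi$ directly from the expression \eqref{3-form} for $\varphi$, substituting the structure equations \eqref{SU3torsion} for $d\omega$, $d\psi_+$, $d\psi_-$ and using Lemma~\ref{lemma1} to handle the Hodge star $\ast_7$. First I would write $\varphi = f^2\,\omega\wedge dt + f^3(\alpha\,\psi_+ - \beta\,\psi_-)$ and differentiate term by term. The key point is that $f,\alpha,\beta$ depend only on $t$, so $d(f^2) = 2ff'\,dt$, $d(f^3\alpha) = (3f^2f'\alpha + f^3\alpha')\,dt$, and $d(f^3\beta) = (3f^2f'\beta + f^3\beta')\,dt$; in every such term the factor $dt$ kills the $dt$ already present in $\omega\wedge dt$, so only the pieces $d(f^3\alpha)\wedge\psi_+$ and $-d(f^3\beta)\wedge\psi_-$ survive from the coefficient differentiation. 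For the remaining pieces I substitute $d\omega$, $d\psi_+$, $d\psi_-$ from \eqref{SU3torsion} and collect terms. Writing $\psi_\pm\wedge dt$, $\omega^2$, $\nu_1\wedge\omega\wedge dt$, $\nu_3\wedge dt$, $\pi_1\wedge\psi_\pm$, $\pi_2\wedge\omega$, $\sigma_2\wedge\omega$ as the independent "building blocks," the coefficients of $\psi_+\wedge dt$ and $\psi_-\wedge dt$ combine the $-\tfrac32\sigma_0$, $\tfrac32\pi_0$ from $d\omega$ with the $3f'\alpha + f\alpha'$, $3f'\beta + f\beta'$ from coefficient differentiation (after dividing out the overall $f^2$), exactly matching the stated formula.

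For $d\ast_7\varphi$, I first need an explicit formula for $\ast_7\varphi$. Using Lemma~\ref{lemma1} with $p=2$ on $\omega$ and with $p=3$ on $\psi_\pm$, together with the $\mathrm{SU}(3)$ identities $\ast_6\omega = \tfrac12\omega^2$ and $\ast_6\psi_+ = \psi_-$, $\ast_6\psi_- = -\psi_+$ (equivalently $\ast_6\Psi = i\bar\Psi$, consistent with the conventions in Section~\ref{sec-SU(3)}), one gets
\begin{equation*}
\ast_7\varphi = \tfrac12 f^4\,\omega^2 - f^4(\beta\,\psi_+ + \alpha\,\psi_-)\wedge dt,
\end{equation*}
up to the signs dictated by the orientation $vol_7 = f^6\,vol_6\wedge dt$ and the precise sign in the second relation of Lemma~\ref{lemma1}. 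Then I differentiate: $d(\tfrac12 f^4\omega^2) = 2f^3f'\,dt\wedge\omega^2 + f^4\omega\wedge d\omega$, and the coefficient-differentiation term from $-f^4(\beta\psi_+ + \alpha\psi_-)\wedge dt$ again drops because $d(f^4\beta)\wedge dt = 0$ modulo the $dt$ already present, leaving $-f^4(\beta\,d\psi_+ + \alpha\,d\psi_-)\wedge dt$. Substituting \eqref{SU3torsion}, using $\omega\wedge\psi_\pm = 0$, $\omega\wedge\nu_3 = 0$ (since $\nu_3\in\Omega^3_{12}$), $\psi_\pm\wedge\omega = 0$, and collecting on the blocks $\omega^2\wedge dt$, $\nu_1\wedge\omega^2$, $\pi_1\wedge\psi_\pm\wedge dt$, $\pi_2\wedge\omega\wedge dt$, $\sigma_2\wedge\omega\wedge dt$, one reads off the claimed expression; the $\omega^2\wedge dt$ coefficient $2f' + \beta\pi_0 + \alpha\sigma_0$ picks up $2f'$ from $d(f^4)$, $\beta\pi_0$ from $-\beta\,d\psi_+$ (the $\pi_0\,\omega^2$ term), and $\alpha\sigma_0$ from $-\alpha\,d\psi_-$ (the $\sigma_0\,\omega^2$ term), after dividing out $f^3$.

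The main obstacle is bookkeeping rather than anything conceptual: one must be scrupulous about the signs coming from moving $dt$ past $p$-forms (each transposition past a $\psi_\pm$ contributes $(-1)^3$), about the sign in $\ast_7(\gamma\wedge dt) = (-1)^p f^{6-2p}\ast_6\gamma$, and about the identity $\pi_1\wedge\psi_- = J\pi_1\wedge\psi_+$ noted after \eqref{SU3torsion}, which must be used consistently so that the $\pi_1$-terms group as $\pi_1\wedge(\alpha\psi_+ - \beta\psi_-)$ and $\pi_1\wedge(\beta\psi_+ - \alpha\psi_-)\wedge dt$ rather than producing spurious $J\pi_1$ contributions. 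Once the orientation convention and these sign conventions are fixed to agree with Lemma~\ref{lemma1} and with Proposition~\ref{prop-3-form}, the computation is a direct expansion with no further input.
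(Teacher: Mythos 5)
Your plan is exactly the paper's proof: differentiate \eqref{3-form} term by term, substitute the structure equations \eqref{SU3torsion}, and use Lemma~\ref{lemma1} to compute $\ast_7\varphi$ explicitly before differentiating it in the same way. The one concrete error is your displayed formula for $\ast_7\varphi$. Applying Lemma~\ref{lemma1} to the $3$-forms $\psi_\pm$ gives the scaling factor $f^{6-2\cdot 3}=1$, and with $\ast_6\psi_+=\psi_-$, $\ast_6\psi_-=-\psi_+$ one gets
\begin{equation*}
\ast_7\varphi=\tfrac12 f^4\,\omega^2+f^3\big(\beta\,\psi_++\alpha\,\psi_-\big)\wedge dt,
\end{equation*}
i.e. with $+f^3$ rather than your $-f^4$. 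This is not merely the sign ambiguity you flag: if one carries your version through, the $\omega^2\wedge dt$ coefficient of $d\ast_7\varphi$ comes out as $2f^3f'-f^4(\beta\,\pi_0+\alpha\,\sigma_0)$ instead of the stated $f^3(2f'+\beta\,\pi_0+\alpha\,\sigma_0)$, and the remaining torsion terms acquire a spurious factor of $f$. With the corrected $\ast_7\varphi$ the rest of the expansion goes through exactly as you describe and reproduces the proposition.
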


\begin{proof}
For $d\varphi$, the result is a direct consequence of equations \eqref{SU3torsion} and Proposition \ref{prop-3-form}.
On the other hand, from Lemma~\ref{lemma1} it follows that
\begin{equation*}
\ast_{7} \varphi= \frac{1}{2} f^4\, \omega \wedge \omega + f^3\big( \beta\, \psi_+ + \alpha\, \psi_-  \big) \wedge dt,
\end{equation*}
and the result for $d\ast_7 \varphi$ is obtained also as a direct consequence of~\eqref{SU3torsion} and Proposition~\ref{prop-3-form}.
\end{proof}

\begin{theorem}\label{torsiones}
Let $(L, \omega, \psi_+, \psi_-)$ be an $\mathrm{SU}(3)$ manifold with torsion forms $\pi_0, \sigma_0, \pi_1, \nu_1, \pi_2, \sigma_2$ and $\nu_3$.
Then, the torsion forms of a warped $\mathrm{G}_2$ manifold $(M=I_f \times L, \varphi)$ are given by
\begin{equation*}\label{torsion}
\begin{aligned}
\tau_0   = &  \, \frac{4}{7f} \big(3\,\pi_0\,\alpha -3\,\sigma_0\,\beta + f \alpha \beta' - f \beta \alpha' \big),\\[4pt]
\tau_1  = &  \, \frac{1}{2f} \big(\pi_0\,\beta + \sigma_0\,\alpha +2f'\big) dt + \frac{\nu_1}{6}+\frac{\pi_1}{6}, \\[4pt]
\tau_2  = &  \, - \frac23\ast_6\!(\nu_1\wedge \omega^2)\wedge dt+\frac13\ast_6\!(\pi_1\wedge\omega^2)\wedge dt
-\frac13 f \beta \ast_6\!(\pi_1\wedge \psi_+) -\frac13 f \alpha \ast_6\!(\pi_1\wedge \psi_-)\\
& \, +\frac23 f \beta \ast_6\!(\nu_1\wedge \psi_+)
+\frac23 f \alpha \ast_6\!(\nu_1\wedge \psi_-)
-f \beta\, \pi_2 - f \alpha\, \sigma_2, \\[4pt]
\tau_3 = & \, - \frac{3}{14} f^2 \left(\pi_0\,\alpha^2 - \sigma_0\,\alpha\beta - 2 f \beta' \right)\psi_+
+ \frac{3}{14} f^2\left(\pi_0\,\alpha\beta- \sigma_0\,\beta^2 +2f \alpha'\right)\psi_-\\
& \, + \frac{2}{7} f\left( \pi_0\,\alpha - \sigma_0\,\beta -2f\alpha \beta'+2f\beta\alpha' \right)\omega \wedge dt
-\frac12 \ast_6\!(\nu_1\wedge\omega)+\frac12 \ast_6\!(\pi_1\wedge\omega)\\
& \, +\frac12 f \alpha \ast_6\!(\pi_1\wedge\psi_+)\wedge dt
-\frac12 f \beta \ast_6\!(\pi_1\wedge\psi_-)\wedge dt -\frac12 f \alpha \ast_6\!(\nu_1\wedge\psi_+)\wedge dt \\
& \, +\frac12 f \beta \ast_6\!(\nu_1\wedge\psi_-)\wedge dt + f (\alpha\, \pi_2- \beta\, \sigma_2) \wedge dt - f^2\ast_6\!\nu_3. \\
           \end{aligned}
\end{equation*}
\end{theorem}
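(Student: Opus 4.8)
The plan is to extract the four torsion forms $\tau_0,\tau_1,\tau_2,\tau_3$ from the two expressions for $d\varphi$ and $d\ast_7\varphi$ obtained in Proposition~\ref{proposition1}, by matching them against the defining equations \eqref{torsionforms}. The key point is that the $\mathrm{G}_2$ irreducible decompositions of $\Omega^2(M)$ and $\Omega^3(M)$ split each of $d\varphi$ and $d\ast_7\varphi$ into components of pure $\mathrm{G}_2$-type, so each torsion form can be read off by projecting onto the appropriate summand. I would organize the seven $\mathrm{SU}(3)$ torsion forms of the fiber into two groups: the ``scalar-type'' data $\pi_0,\sigma_0$ together with $f,\alpha,\beta$ and their derivatives (which will feed $\tau_0$, the $dt$-part of $\tau_1$, and the $\psi_\pm$- and $\omega\wedge dt$-parts of $\tau_3$), and the ``form-type'' data $\pi_1,\nu_1,\pi_2,\sigma_2,\nu_3$ (which will feed the $1$-form part of $\tau_1$, all of $\tau_2$, and the remaining terms of $\tau_3$).

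First I would compute $\tau_0$. Pairing $d\varphi$ with $\ast_7\varphi$ (equivalently, extracting the $\Omega^3_1(M)$-component $\tfrac17\langle d\varphi,\ast_7\varphi\rangle\,\varphi$, or using $d\varphi\wedge\varphi=\tau_0\,\mathrm{vol}_7$ up to the known constant), one isolates the coefficient $\tau_0$; only the terms $-\tfrac32\sigma_0-3f'\alpha-f\alpha'$, $\tfrac32\pi_0+3f'\beta+f\beta'$ and $f(\alpha\pi_0-\beta\sigma_0)$ of Proposition~\ref{proposition1} contribute, and using $\alpha^2+\beta^2=1$ (hence $\alpha\alpha'+\beta\beta'=0$) the cross terms in $f'$ collapse, giving the stated $\tau_0=\tfrac{4}{7f}(3\pi_0\alpha-3\sigma_0\beta+f\alpha\beta'-f\beta\alpha')$. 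Next, $\tau_1$ is obtained from the $\Omega^2_7(M)$-part of $d\ast_7\varphi$, using the scaling \eqref{G2prop}: the term $f^3(2f'+\beta\pi_0+\alpha\sigma_0)\,\omega^2\wedge dt$ produces the $dt$-coefficient, while $f^4\nu_1\wedge\omega^2$ together with the $\pi_1\wedge\psi_\pm\wedge dt$ terms — rewritten via Lemma~\ref{lemmaprop} in terms of $\ast_6\nu_1$, $\ast_6\pi_1$ — yield the $\tfrac16(\nu_1+\pi_1)$ part; one should double-check the coefficient $\tfrac16$ against the factor $4$ in the first line of \eqref{torsionforms} and the identities $\ast_6(\tau\wedge\omega)\wedge\omega=2\ast_6\tau$ of Lemma~\ref{lemmaprop}.

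With $\tau_0$ and $\tau_1$ in hand, $\tau_2$ follows from $d\ast_7\varphi-4\tau_1\wedge\ast_7\varphi=\tau_2\wedge\varphi$ by subtracting the already-identified $\tau_1$-contribution and projecting onto $\Omega^2_{14}(M)$; and $\tau_3$ follows from $d\varphi-\tau_0\ast_7\varphi-3\tau_1\wedge\varphi=\ast_7\tau_3$ by applying $\ast_7$ and using Lemma~\ref{lemma1} to convert $\ast_7$ of fiber-forms and fiber-forms-wedge-$dt$ back into $\ast_6$ of fiber data. The main obstacle is purely bookkeeping: one must carefully track (i) the powers of $f$ introduced by Lemma~\ref{lemma1} when toggling between $\ast_6$ and $\ast_7$, (ii) the several $\mathrm{SU}(3)$-specific scaling identities of Lemma~\ref{lemmaprop} used to re-express $\ast_6(\pi_1\wedge\psi_\pm)$, $\ast_6(\nu_1\wedge\psi_\pm)$, and $\ast_6(\tau\wedge\omega)$, and (iii) the numerical normalizations ($\tfrac{21}{8}$, the factors $3$ and $4$) built into \eqref{torsionforms}. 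A useful consistency check at the end is to substitute the resulting $\tau_0,\tau_1,\tau_2,\tau_3$ into Bryant's scalar-curvature formula \eqref{scal7} and verify it reproduces $Scal(g_\varphi)$ computed from $Scal(g_{\omega,\psi_+})$ via \eqref{scal6} and the standard warped-product curvature formula, which also pins down any stray sign.
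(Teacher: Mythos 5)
Your proposal is correct and follows essentially the same route as the paper: the authors also invert the defining equations \eqref{torsionforms} via the projection formulas $\tau_0=\tfrac17\ast_7(d\varphi\wedge\varphi)$, $\tau_1=-\tfrac1{12}\ast_7(\ast_7 d\varphi\wedge\varphi)$, $\tau_2=-\ast_7 d\ast_7\varphi+4\ast_7(\tau_1\wedge\ast_7\varphi)$, $\tau_3=\ast_7 d\varphi-\tau_0\varphi-3\ast_7(\tau_1\wedge\varphi)$, then substitute Proposition~\ref{proposition1} and grind through using Lemmas~\ref{lemmaprop} and~\ref{lemma1}. The only (immaterial) divergence is that you read $\tau_1$ off the $\Omega^5_7$-component of $d\ast_7\varphi$ while the paper takes the $\Omega^3_7$-component of $d\varphi$; both determine $\tau_1$ equally well.
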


\begin{proof}
From \eqref{torsionforms} it can be easily obtained that
\begin{equation*}
\begin{array}{lll}
& \tau_0  = \frac{1}{7} \ast_7\!(d\, \varphi \wedge \varphi),\quad
& \tau_2  = -\ast_7 d \ast_7 \varphi + 4\ast_7\!(\tau_1 \wedge \ast_7 \varphi), \\[5pt]
& \tau_1  = -\frac{1}{12} \ast_7\!(\ast_7 d \, \varphi \wedge \varphi), \quad
& \tau_3 = \ast_7 d \varphi -\tau_0\, \varphi -3 \ast_7\!(\tau_1 \wedge \varphi).
\end{array}
\end{equation*}
Let us detail the computations for $\tau_0$. By Proposition~\ref{proposition1} we have
\begin{equation*}
\begin{aligned}
d\varphi \wedge \varphi = & \, \Big[\!-f^2 \Big( \frac{3}{2} \sigma_0 + 3f' \alpha + f \alpha' \Big) \psi_+ \wedge dt
+f^2 \Big( \frac{3}{2} \pi_0 + 3f' \beta + f \beta' \Big) \psi_- \wedge dt \\
                                & \, +f^3 \big( \pi_0\, \alpha - \sigma_0\, \beta \big)\, \omega^2 + f^2 \nu_1 \wedge \omega \wedge dt + f^2 \nu_3 \wedge dt\\
                                & \, +f^3 \pi_1 \wedge (\alpha\, \psi_+ - \beta\, \psi_-) -f^3 \big( \alpha\, \pi_2 - \beta\, \sigma_2 \big) \wedge \omega\Big] \wedge \Big[ f^2 \omega \wedge dt + f^3\big( \alpha\, \psi_+ - \beta\, \psi_-  \big) \Big]\\
                                  = & \, f^5 ( \pi_0\, \alpha - \sigma_0\, \beta )\, \omega^3 \wedge dt
                                + \alpha f^5 \Big( \frac32 \pi_0 + 3f' \beta + f \beta' \Big) \psi_+ \wedge \psi_- \wedge dt \\
                                & \,   - \beta f^5 \Big( \frac32 \sigma_0 + 3f' \alpha + f \alpha' \Big) \psi_+ \wedge \psi_- \wedge dt \\
                                  = & \, f^5 ( \pi_0\, \alpha - \sigma_0\, \beta )\, \omega^3 \wedge dt
                                  + f^5 \big( \pi_0\, \alpha +\frac23 f \alpha \beta' - \sigma_0\, \beta - \frac{2}{3} f \beta \alpha' \big)\, \omega^3 \wedge dt \\
                                 = & \, f^5 \big( 2\pi_0\, \alpha -2 \sigma_0\, \beta + \frac23 f \alpha \beta' -\frac23 f \beta \alpha' \big) \, \omega^3 \wedge dt.  
                                   \end{aligned}
\end{equation*}
Therefore, using Lemma~\ref{lemma1} we get
\begin{equation*}
\tau_0 = \frac{1}{7} \ast_7\!(d\, \varphi \wedge \varphi)
= \, \frac{4}{7f} \big(3\pi_0\, \alpha -3\sigma_0\, \beta + f \alpha \beta' - f \beta \alpha' \big).
\end{equation*}

Similarly, the results for $\tau_1, \tau_2$ and $\tau_3$ follow as a long but standard computation
taking into account Proposition~\ref{proposition1} and Lemmas~\ref{lemmaprop} and~\ref{lemma1}.
\end{proof}

An immediate consequence of the previous theorem is the following

\begin{corollary}\label{corolary1}
The torsion forms of a warped $\mathrm{G}_2$-structure satisfy:
\begin{equation*}
\begin{aligned}
\tau_0   = 0  & \iff \left \{  \begin{array}{ll} i) \, \, \, \, \,  \, \, & 3\pi_0\, \alpha -3 \sigma_0\, \beta+ f \alpha \beta' - f \beta \alpha'=0;
\end{array}
\right.
\\
\tau_1  = 0  & \iff    \left \{  \begin{array}{ll} ii)   \, \, \, & \sigma_0\, \alpha + \pi_0\, \beta + 2f'=0,\\[3pt]
                                       iii)  \,  \, \, &\pi_1 = -\nu_1;
\end{array}
\right.
\\
\tau_2  = 0  & \iff \left \{  \begin{array}{ll} iv)  \, \, \, \, & \pi_1 = 2 \nu_1,\\[3pt]
                                       v)  \,  \, \, \, &\beta \pi_2 + \alpha \sigma_2 =0;
\end{array}
\right.
  \\
 \tau_3 = 0 & \iff \left \{  \begin{array}{ll} vi)   & \pi_0\,\alpha -  \sigma_0\,\beta -2 f \alpha \beta'  +2 f \beta \alpha' =0, \\[3pt]
 vii)   & \pi_1= \nu_1, \\[3pt]
 viii) & \alpha \pi_2 - \beta \sigma_2 =0,\\[3pt]
 ix)  & \nu_3=0.
\end{array}
\right.
 \\
\end{aligned}
\end{equation*}
\end{corollary}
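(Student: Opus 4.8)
The plan is to read off each equivalence directly from the explicit formulas for $\tau_0,\tau_1,\tau_2,\tau_3$ given in Theorem~\ref{torsiones}, using the fact that the various summands in those expressions live in distinct $\mathrm{SU}(3)$-irreducible subspaces of $\Omega^\bullet(M)$ and hence must vanish separately. For $\tau_0$, which is a function, vanishing is literally the single scalar equation obtained by setting the displayed expression (times $7f/4$) equal to zero, which is (i).

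For $\tau_1$, I would split the $1$-form $\tau_1 = \frac{1}{2f}(\pi_0\beta+\sigma_0\alpha+2f')\,dt + \frac16(\nu_1+\pi_1)$ into its $dt$-component and its component along $\Omega^1(L)$; since $dt$ is orthogonal to the pullback of $\Omega^1(L)$, the two pieces vanish independently, giving (ii) $\sigma_0\alpha+\pi_0\beta+2f'=0$ and $\nu_1+\pi_1=0$, i.e. (iii). For $\tau_3$ I would argue similarly but with more components: the terms $f^2(\cdots)\psi_+$, $f^2(\cdots)\psi_-$, $f(\cdots)\,\omega\wedge dt$, the purely $6$-dimensional piece $-\tfrac12\ast_6(\nu_1\wedge\omega)+\tfrac12\ast_6(\pi_1\wedge\omega)$, the $(\cdots)\wedge dt$ pieces involving $\pi_1,\nu_1,\pi_2,\sigma_2$, and $-f^2\ast_6\nu_3$ all lie in mutually distinct $\mathrm{SU}(3)$-types (e.g. $\ast_6(\tau\wedge\omega)\in\Omega^3_6(L)$ while $\ast_6\nu_3\in\Omega^3_{12}(L)$ and $\psi_\pm\in\Omega^3_{1_\pm}(L)$), so each vanishes separately. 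The $\psi_+$ and $\psi_-$ coefficients give $\pi_0\alpha^2-\sigma_0\alpha\beta-2f\beta'=0$ and $\pi_0\alpha\beta-\sigma_0\beta^2+2f\alpha'=0$; multiplying the first by $\alpha$, the second by $\beta$ and adding, and using $\alpha^2+\beta^2=1$, yields $\pi_0\alpha-\sigma_0\beta-2f(\alpha\beta'-\beta\alpha')\cdot(\text{something})$; combined with the $\omega\wedge dt$ coefficient one obtains precisely (vi) together with $\alpha\beta'-\beta\alpha'=0$ being absorbed, and (after a short linear-algebra manipulation) these reduce to the single condition (vi). The $6$-dimensional piece forces $\nu_1=\pi_1$, which is (vii); the $dt$-pieces then force $\alpha\pi_2-\beta\sigma_2=0$, which is (viii); and $-f^2\ast_6\nu_3=0$ forces (ix).

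For $\tau_2$ the argument is the same in spirit: $\tau_2$ is a $2$-form on $M$, written as a sum of a $(\Omega^1(L)\wedge dt)$-type piece built from $\nu_1,\pi_1$, a piece $\ast_6(\tau\wedge\psi_\pm)$ which by the decomposition of $\Omega^4(L)$ lies in $\Omega^2_6(L)$, and the piece $-f\beta\pi_2-f\alpha\sigma_2\in\Omega^2_8(L)$. These three types are $\mathrm{SU}(3)$-orthogonal, so vanish independently. The $dt$-type piece $-\tfrac23\ast_6(\nu_1\wedge\omega^2)\wedge dt+\tfrac13\ast_6(\pi_1\wedge\omega^2)\wedge dt$, using $\ast_6(\tau\wedge\omega^2)=2\ast_6\ast_6\tau\wedge\cdots$ — more precisely using Lemma~\ref{lemmaprop} which identifies $\ast_6(\tau\wedge\omega)\wedge\omega=2\ast_6\tau$, hence $\ast_6(\tau\wedge\omega^2)$ is a nonzero multiple of $\tau$ — gives $-\tfrac23\cdot 2\nu_1+\tfrac13\cdot 2\pi_1=0$, i.e. $\pi_1=2\nu_1$, which is (iv). The $\Omega^2_6(L)$-piece $-\tfrac13 f\beta\ast_6(\pi_1\wedge\psi_+)-\tfrac13 f\alpha\ast_6(\pi_1\wedge\psi_-)+\tfrac23 f\beta\ast_6(\nu_1\wedge\psi_+)+\tfrac23 f\alpha\ast_6(\nu_1\wedge\psi_-)$ vanishes automatically once $\pi_1=2\nu_1$ holds (so it contributes no new condition), and the $\Omega^2_8(L)$-piece gives $\beta\pi_2+\alpha\sigma_2=0$, which is (v).

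The main obstacle — really the only nontrivial point — is verifying that the several summands appearing in each $\tau_i$ genuinely lie in pairwise distinct (hence orthogonal) $\mathrm{SU}(3)$-irreducible subspaces of the relevant $\Omega^p(M)$, so that the vanishing of $\tau_i$ decouples into the vanishing of each summand; this uses the decompositions of $\Omega^2(L)$ and $\Omega^3(L)$ recalled in Section~\ref{sec-SU(3)} together with Lemma~\ref{lemma1} to transport them to $M$, and the identities of Lemma~\ref{lemmaprop} to evaluate expressions like $\ast_6(\tau\wedge\omega^2)$. Once that bookkeeping is in place, each stated equivalence is immediate from Theorem~\ref{torsiones}, with only the elementary linear-algebra reductions (using $\alpha^2+\beta^2=1$) needed to bring the $\tau_3$ conditions to the compact form (vi)--(ix). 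I will present this by treating $\tau_0,\tau_1,\tau_2,\tau_3$ in turn.
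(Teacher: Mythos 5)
Your proposal is correct and follows essentially the same route as the paper: read each equivalence off from Theorem~\ref{torsiones} using the independence of the $\mathrm{SU}(3)$-irreducible summands, with the only nontrivial step being the reduction of the three $\psi_+$, $\psi_-$, $\omega\wedge dt$ coefficients of $\tau_3$ to the single condition (vi). The paper does that last step slightly more cleanly than your ``multiply by $\alpha$, by $\beta$, and add'' sketch, by factoring each of the first two coefficients directly as $\alpha$ (resp.\ $\beta$) times the expression in (vi), using $\alpha\alpha'=-\beta\beta'$ — which gives the equivalence in both directions at once.
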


\begin{proof}
The result is obvious for $\tau_0, \tau_1$ and $\tau_2$ in view of Theorem~\ref{torsiones}.
For $\tau_3$, the vanishing of the first three summands (see Theorem~\ref{torsiones}) is equivalent to $vi)$. Indeed,
\begin{equation*}
\pi_0\, \alpha^2 - \sigma_0\, \alpha \beta - 2f \beta'  = \, \alpha \Big(\pi_0\,\alpha - \sigma_0\, \beta -2 f \alpha \beta'  +2 f \beta \alpha'\Big)
\end{equation*}
and
\begin{equation*}
\pi_0\, \alpha \beta - \sigma_0\, \beta^2 + 2 f \alpha' =  \, \beta \Big(\pi_0\,\alpha - \sigma_0\, \beta -2 f \alpha \beta'  +2 f \beta \alpha'\Big),
\end{equation*}
where we are using the fact that $\alpha \alpha'=-\beta \beta'$, which follows from the identity
$\alpha^2+\beta^2=1$. The other conditions $vii), viii)$ and $ix)$ are clear from Theorem~\ref{torsiones}.
\end{proof}

\end{section}


\begin{section}{Einstein warped $\mathrm{G}_2$ manifolds}\label{sec-Einstein-G2}

\noindent Our goal in this section is to construct Einstein $7$-manifolds in the different $\mathrm{G}_2$-classes
by means of warped products of certain Einstein $\mathrm{SU}(3)$ manifolds.
The $\mathrm{G}_2$-structures 
are of the form \eqref{3-form}, i.e. what we called warped $\mathrm{G}_2$-structures. In this way we will obtain explicit Einstein examples with scalar curvature of different signs. In Section~\ref{sec-main-classes} we study the principal classes of $\mathrm{G}_2$ manifolds,
Section~\ref{sec-coclosed} is devoted to 
coclosed $\mathrm{G}_2$-structures,
in Section~\ref{sec-coupled} warped products of 
coupled $\mathrm{SU}(3)$-structures are considered, and
in Section~\ref{sec-solvmanifolds} we obtain 
$\mathrm{G}_2$ structures on
the hyperbolic cosine cone of Einstein solvmanifolds.

Let us consider the warped product $M = B \times_f F$, i.e. the product manifold $B \times F$
endowed with the metric $g$ given by
$g = f^2 q^*(g_F) +  p^*(g_B)$,
with $p$ and $q$ the projections of $B \times F$ onto $B$ and $F$, respectively, and $f$ a nowhere vanishing
differentiable function on $B$.
We denote by $Ric^B$ the lift to $M$ (i.e. the pullback by $p$) of the Ricci curvature of $B$,
similarly for $Ric^F$, and let $Hess(f)$ be the lift to $M$ of the Hessian of  $f$.
By \cite[p. 211]{ON} the warped product  $M = B \times _f F$  is Einstein with constant $\lambda$ (i.e. $Ric =\lambda\, g$)
if and only if $(F, g_F)$ is Einstein with constant $\mu$ (i.e. $Ric^F = \mu\, g_F)$ and the following conditions are satisfied:
$$
\lambda\, g_B =  Ric^B - \frac{d}{f} Hess (f),\quad\quad
\lambda =  \frac{\mu}{f^2} -  \frac{\Delta f}{f} - (d -1) \left | \frac{\nabla f } {f}  \right |_{g_B}^2,
$$
where $d=\dim F \geq 2$, $\Delta f ={\rm tr}\, \big(Hess(f)\big)$, and $\nabla f$ denotes the gradient of $f$.

Moreover, when the base space $B$ has dimension 1, these equations 
reduce to
\begin{equation}\label{q=1}
(f')^2+\frac{\lambda}{d}f^2=\frac{\mu}{d-1}.
\end{equation}
The behavior of the solutions of \eqref{q=1} depends on the signs of the Einstein constants $\lambda$ and $\mu$.
Nevertheless, up to homotheties, those solutions (besides the constant case) are given in
Table~\ref{tableBesse} (see also \cite{Besse}).

\begin{table}[h]
\caption{\textbf{Solutions of the equation \eqref{q=1}}}
\begin{center}
\begin{tabular}{|c|c|c|c|c|c|}\hline
\hspace{0.2 cm} $\mu$ \hspace{0.2 cm} & \hspace{0.2 cm} $-(d-1)$ \hspace{0.2 cm} & \hspace{0.2 cm} $0$ \hspace{0.2 cm} & \hspace{0.2 cm} $d-1$ \hspace{0.2 cm} & \hspace{0.2 cm} $d-1$ \hspace{0.2 cm} & \hspace{0.2 cm} $d-1$ \hspace{0.2 cm} \\\hline
\hspace{0.2 cm} $\lambda$ \hspace{0.2 cm} & \hspace{0.2 cm} $-d$ \hspace{0.2 cm} & $-d$ & $-d$ &$0$&$d$ \\\hline \hline
\hspace{0.2 cm} $f(t)$ \hspace{0.2 cm} & $\cosh t$ & $e^t$ & $\sinh t$ &$t$&$\sin t$ \\\hline
\end{tabular}
\label{tableBesse}
\end{center}
\end{table}

From this table the next result  follows
\begin{theorem}\label{teorema-del-Besse}\cite[Theorem 9.110]{Besse}
Let $M = B \times_f F$ be a warped product, with $\dim B =1$ and $\dim F =d>1$.
If $M$ is a complete Einstein manifold, then either $M$ is a Ricci-flat Riemannian product, or
$B=\mathbb{R}$, $F$ is Einstein with non-positive scalar curvature and $M$ has negative scalar curvature.
\end{theorem}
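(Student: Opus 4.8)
The plan is to reduce everything to the ordinary differential equation \eqref{q=1} together with its companion $f''=-\tfrac{\lambda}{d}f$ (this is the Hessian equation in the characterization of \cite{ON} recalled above, specialized to $\dim B=1$ where $Ric^B=0$; it also follows by differentiating \eqref{q=1}), and then to read off which solutions survive the completeness hypothesis. Since $M$ is Einstein, that characterization already gives that $F$ is Einstein, say $Ric^F=\mu\,g_F$, and that $f$ solves the two equations; moreover $Scal(M)=(d+1)\lambda$ and $Scal(F)=d\mu$, so the two alternatives in the statement correspond to the cases $\lambda=0$ (with $f$ then forced to be constant) and $\lambda<0$ (with $\mu\le 0$).

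First I would pin down the role of completeness. Each leaf $B\times\{q\}$ of the warped product is totally geodesic and isometric to $(B,g_B)$, so $t\mapsto(t,q)$ is a unit-speed geodesic of $M$; hence if $B$ were a proper open subinterval of $\mathbb{R}$ it would be an inextensible geodesic of finite length, contradicting completeness. Therefore $B$ is a complete connected $1$-manifold, namely $\mathbb{R}$ or $S^1$, and if $B=S^1$ then $f$ is a periodic nowhere-vanishing solution of $f''=-\tfrac{\lambda}{d}f$, which forces $\lambda=0$ and $f$ constant. When $f$ is constant, $M=B\times F$ is a Riemannian product, its Ricci tensor splits, and $\dim B=1$ forces $0=Ric^B=\lambda\,g_B$; hence $\lambda=\mu=0$ and $M$ is a Ricci-flat Riemannian product, which is the first alternative.

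It then remains to analyze $B=\mathbb{R}$ with $f$ a nowhere-vanishing non-constant solution of $f''=-\tfrac{\lambda}{d}f$ on all of $\mathbb{R}$. If $\lambda>0$ then $f(t)=A\sin(\sqrt{\lambda/d}\,t+\theta_0)$ has zeros, which is impossible; if $\lambda=0$ then $f$ is affine and non-constant, hence again has a zero, impossible. Thus $\lambda<0$, so $Scal(M)=(d+1)\lambda<0$; writing $k=\sqrt{-\lambda/d}$ and $f=c_1e^{kt}+c_2e^{-kt}$ and substituting into \eqref{q=1} yields $\tfrac{\mu}{d-1}=(f')^2+\tfrac{\lambda}{d}f^2=-4k^2c_1c_2$, so $\mu$ has the sign opposite to $c_1c_2$. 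If $c_1c_2<0$ then $f$ vanishes at the point where $e^{2kt}=-c_2/c_1$, impossible; hence $c_1c_2\ge 0$, i.e. $\mu\le 0$, and then $f$ (after possibly replacing it by $-f$) is positive everywhere, consistent with $B=\mathbb{R}$. So $F$ is Einstein with $Scal(F)=d\mu\le 0$ and $M$ has negative scalar curvature, which is the second alternative. Equivalently, this amounts to inspecting Table~\ref{tableBesse}: the constant solution gives the Ricci-flat product, $f=\cosh t$ and $f=e^t$ give the second alternative, while $f=\sinh t$, $f=t$ and $f=\sin t$ each acquire a zero and are therefore excluded by completeness.

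I expect the only delicate point to be the completeness step: recognizing, via the totally geodesic leaves, that the base must be all of $\mathbb{R}$ (or $S^1$, which collapses back to the product case), so that $f$ is a nowhere-vanishing solution of the linear equation on the whole line. Once that is in place, the remainder is a routine classification of the solutions of \eqref{q=1}.
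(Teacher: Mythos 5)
Your proposal is correct and follows essentially the same route as the paper, which simply classifies the solutions of \eqref{q=1} (Table~\ref{tableBesse}) and discards, by completeness, those warping functions that acquire a zero; your ODE analysis via $f''=-\tfrac{\lambda}{d}f$ just makes that classification and the exclusion step explicit. The only slip is the phrase ``inextensible geodesic of finite length'': for a base such as $(a,\infty)$ the length is infinite, and the correct point is that the base geodesic fails to be defined for all time, which still contradicts completeness, so the argument is unaffected.
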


We consider $B=I_f \subset \mathbb{R}$ an open interval
where the function $f(t)$ does not vanish.
For the functions in Table~\ref{tableBesse} we will take generically $I_f=\mathbb{R}$ for $f(t)=\cosh t$ or $e^t$, $I_f=(0,\infty)$ for $f(t)=\sinh t$ or $t$,
and $I_f=(0,\pi)$ for $f(t)=\sin t$. Following~\cite{FIMU}, for the latter case, if $F$ is compact then the
product manifold $[0,\pi] \times F$ is a compact manifold endowed with a metric
$g = dt^2 + \sin^2 t \, q^*(g_F)$ having two
conical singularities at $t=0$ and  $t=\pi$.

In order to use directly Table \ref{tableBesse}, we will consider the Einstein metric on the fiber $F$ to be ``normalized'',
that is, its Einstein constant is
$$-(d-1), \quad 0, \quad \text{ or } \quad d-1,$$
where $d$ denotes the dimension of $F$, or equivalently, the scalar curvature is
$$-d\, (d-1), \quad 0, \quad \text{ or } \quad d\, (d-1),$$
respectively.
There is no loss of generality in assuming this condition since every Einstein metric can be normalized via a rescaling.
Similar considerations are applied to Einstein metrics on the total space $M$ of the warped product.

\begin{subsection}{Principal classes of $\mathrm{G}_2$ manifolds}\label{sec-main-classes}

\noindent In this section we focus on Einstein $7$-manifolds in the principal classes
of $\mathrm{G}_2$ manifolds, i.e. in the classes $\mathcal{P}$, $\mathcal{X}_1$, $\mathcal{X}_2$, $\mathcal{X}_3$
and $\mathcal{X}_4$. Whereas one can construct Einstein manifolds in the classes $\mathcal{P}$, $\mathcal{X}_1$
and $\mathcal{X}_4$ by means of warped $\mathrm{G}_2$-structures,
however we will prove in Proposition~\ref{2+3-G2} that such a manifold in the class
$\mathcal{X}_2 \oplus \mathcal{X}_3$ is necessarily parallel.

Next, several characterizations will be given for the classes $\mathcal{P}$, $\mathcal{X}_1$ and $\mathcal{X}_4$.
We begin with parallel $\mathrm{G}_2$ manifolds.

\begin{proposition}\label{sec-P}
There exists a parallel  warped $\mathrm{G}_2$-structure on $M= I_f \times L$
if and only if
the fiber $(L, \omega, \psi_+, \psi_-)$ belongs to $\mathcal{W}_1^+ \oplus \mathcal{W}_1^-$ and is Einstein with $Scal(g_{\omega, \psi_+})=30$.

Furthermore, in that case $M=(0,\infty) \times L$ is the $t$-cone with the $\mathrm{G}_2$-structure
\begin{equation}\label{t-cone-P}
\varphi= t^2\, \omega \wedge dt + t^3 \left(-\frac{\sigma_0}{2} \psi_+ + \frac{\pi_0}{2}\psi_-\right),
\end{equation}
where $\sigma_0,\pi_0$ are the (constant) torsion functions of the $\mathrm{SU}(3)$-structure, which satisfy $\pi_0^2+\sigma_0^2 = 4$.
\end{proposition}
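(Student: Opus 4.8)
The plan is to use Theorem~\ref{torsiones} together with Corollary~\ref{corolary1} to characterize when all four torsion forms $\tau_0,\tau_1,\tau_2,\tau_3$ vanish, and then feed the resulting conditions on the $\mathrm{SU}(3)$ torsion forms into the scalar curvature formula \eqref{scal6} and the Einstein equation \eqref{q=1}. First I would observe that a warped $\mathrm{G}_2$-structure is parallel exactly when conditions $i)$--$ix)$ of Corollary~\ref{corolary1} all hold. Combining $iii)$ ($\pi_1=-\nu_1$) with $vii)$ ($\pi_1=\nu_1$) forces $\pi_1=\nu_1=0$; then $iv)$ and $v)$ give $\beta\pi_2+\alpha\sigma_2=0$, while $viii)$ gives $\alpha\pi_2-\beta\sigma_2=0$, and since $\alpha^2+\beta^2=1$ this linear system forces $\pi_2=\sigma_2=0$; finally $ix)$ gives $\nu_3=0$. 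So the only possibly nonzero $\mathrm{SU}(3)$ torsion forms are $\pi_0$ and $\sigma_0$, i.e.\ the fiber lies in $\mathcal{W}_1^+\oplus\mathcal{W}_1^-$.

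Next I would extract the differential equations on $f,\alpha,\beta$. With $\pi_1=\nu_1=\pi_2=\sigma_2=\nu_3=0$, the remaining conditions are $i)$: $3\pi_0\alpha-3\sigma_0\beta+f\alpha\beta'-f\beta\alpha'=0$; $ii)$: $\sigma_0\alpha+\pi_0\beta+2f'=0$; and $vi)$: $\pi_0\alpha-\sigma_0\beta-2f\alpha\beta'+2f\beta\alpha'=0$. Adding $2/3$ times $i)$ to $vi)$ (using $\alpha\alpha'=-\beta\beta'$, hence $\alpha\beta'-\beta\alpha'$ is the relevant combination) eliminates the derivative term and yields $\pi_0\alpha-\sigma_0\beta=0$ — wait, more carefully: $i)$ and $vi)$ together are a $2\times2$ system in the unknowns $\pi_0\alpha-\sigma_0\beta$ and $f(\alpha\beta'-\beta\alpha')$, which forces both to vanish. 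Thus $\pi_0\alpha=\sigma_0\beta$ and $\alpha\beta'=\beta\alpha'$; combined with $\alpha\alpha'=-\beta\beta'$ this gives $\alpha'=\beta'=0$, so $\alpha,\beta$ are constants, and then $\pi_0,\sigma_0$ are constants too (the fiber being fixed). Condition $ii)$ becomes $f'=-\tfrac12(\sigma_0\alpha+\pi_0\beta)$, a constant, so $f$ is affine in $t$; rescaling and translating we may take $f(t)=t$ on $(0,\infty)$, which forces $\sigma_0\alpha+\pi_0\beta=-2$. Using $\pi_0\alpha=\sigma_0\beta$ and $\alpha^2+\beta^2=1$ one solves $\alpha=-\sigma_0/2$, $\beta=\pi_0/2$ (up to sign), giving exactly \eqref{t-cone-P} and the relation $\pi_0^2+\sigma_0^2=4$.

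It remains to identify the Einstein/scalar-curvature condition on the fiber. By \eqref{scal6}, with only $\pi_0,\sigma_0$ nonzero, $Scal(g_{\omega,\psi_+})=\tfrac{15}{2}(\pi_0^2+\sigma_0^2)=\tfrac{15}{2}\cdot 4=30$, matching the claim; the Einstein condition on $(L,g_{\omega,\psi_+})$ is exactly what \cite[Theorem~3.4]{BV} / the analysis of warped Einstein metrics demands, and with $f(t)=t$ the second Besse equation (the $\lambda=0$ column of Table~\ref{tableBesse} with $d=6$) reads $\mu=d-1=5$, i.e.\ $Ric^F=5\,g_F$, equivalently $Scal=30$. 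Conversely, given any Einstein $\mathrm{SU}(3)$ manifold in $\mathcal{W}_1^+\oplus\mathcal{W}_1^-$ with $Scal=30$, one checks the $t$-cone \eqref{t-cone-P} has all $\tau_i=0$ by plugging back into Theorem~\ref{torsiones}, and it is Ricci-flat by \cite[p.~211]{ON}. The main obstacle — really just bookkeeping — is making sure the $2\times2$ linear-algebra argument that kills the derivative terms is airtight, i.e.\ that the coefficient matrix of the system $\{i),vi)\}$ is invertible; since its determinant is (up to sign and a factor $f$) a nonzero multiple of $(\alpha^2+\beta^2)=1$, this causes no trouble, and one must additionally note $f\neq 0$ on $I_f$, which holds by hypothesis.
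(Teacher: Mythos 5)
Your argument follows essentially the same route as the paper's proof: conditions $i)$--$ix)$ of Corollary~\ref{corolary1} kill all torsion except $\pi_0,\sigma_0$, the linear system $\{i),vi)\}$ forces $\pi_0\alpha-\sigma_0\beta=0$ and $\alpha\beta'-\beta\alpha'=0$ (hence $\alpha,\beta$ constant), condition $ii)$ pins down $f(t)=t$ and $(\alpha,\beta)$, and Ricci-flatness of a parallel structure together with the O'Neill/Besse criterion gives $\mu=5$, i.e.\ $Scal(g_{\omega,\psi_+})=30$. The only slips are minor: solving $\sigma_0\alpha+\pi_0\beta=-2$, $\pi_0\alpha-\sigma_0\beta=0$ with $\pi_0^2+\sigma_0^2=4$ gives $\beta=-\pi_0/2$ (not $+\pi_0/2$), which is what produces the coefficient $+\tfrac{\pi_0}{2}$ of $\psi_-$ in \eqref{t-cone-P} since the ansatz is $f^3(\alpha\psi_+-\beta\psi_-)$; and the determinant of the $2\times 2$ system $\{i),vi)\}$ in the unknowns $\pi_0\alpha-\sigma_0\beta$ and $f(\alpha\beta'-\beta\alpha')$ is simply $-7$, not a multiple of $\alpha^2+\beta^2$.
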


\begin{proof}
Let us suppose that the $\mathrm{SU}(3)$ manifold $(L, \omega, \psi_+, \psi_-)$ belongs to $\mathcal{W}_1^+ \oplus \mathcal{W}_1^-$ and is Einstein with constant~5.
Hence, the torsion reduces to $\pi_0$ and $\sigma_0$, and
the equations \eqref{SU3torsion} are given by
$$
d\omega  = -\frac{3}{2}\sigma_0\, \psi_++\frac{3}{2}\pi_0\, \psi_-, \quad\quad
d\psi_+  = \pi_0\, \omega^2, \quad\quad
d\psi_-  = \sigma_0\, \omega^2.
$$
These equations imply that the wedge product of the 1-forms $d\pi_0,d\sigma_0$ by $\omega^2$ is zero, so
$\pi_0,\sigma_0$ are constant.
Moreover, from \eqref{scal6} we get
$30 = Scal(g_{\omega, \psi_+}) = \frac{15}{2} (\pi_0^2+  \sigma_0^2)$,
which implies $\pi_0^2+\sigma_0^2 = 4$. Now, the
warped $\mathrm{G}_2$-structure  with $f(t)=t$, $\alpha=-\frac{\sigma_0}{2}$ and  $\beta=-\frac{\pi_0}{2}$
satisfies the equations $i)-ix)$ in Corollary~\ref{corolary1}, so it is parallel.

Conversely, let us suppose that there exists a warped $\mathrm{G}_2$-structure that is parallel,
i.e. the equations $i)-ix)$ in Corollary~\ref{corolary1} are satisfied. From \emph{iii)}, \emph{iv)} and \emph{ix)}
we have that $\pi_1=\nu_1=\nu_3=0$,
and from \emph{v)} and \emph{viii)} we get $\sigma_2=\pi_2=0$ because $\alpha^2(t)+\beta^2(t)=1$. Hence,
the manifold $(L,\omega,\psi_+,\psi_-)$ belongs to the $\mathrm{SU}(3)$-class $\mathcal{W}_1^+ \oplus \mathcal{W}_1^-$,
and by the first part of the proof we have that the torsion functions $\pi_0$ and $\sigma_0$ are constant.
Furthermore, by \eqref{scal7} any $\mathrm{G}_2$-parallel structure is Ricci-flat, so from Table~\ref{tableBesse}
we get that the warping function is necessarily $f(t)=t$ and the metric induced by the $\mathrm{SU}(3)$-structure
is Einstein with constant $\mu=5$. Notice that \eqref{scal6} implies $\pi_0^2+\sigma_0^2 = 4$.

Finally, it remains to see that the $\mathrm{G}_2$-structure on the $t$-cone is given by~\eqref{t-cone-P}.
Let us write $\alpha(t)=\cos\theta(t)$ and $\beta(t) =\sin\theta(t)$,
for some function $\theta(t)$. The equations $i)$ and $vi)$ for $f(t)=t$ are equivalent to
$$
\pi_0\, \alpha(t) - \sigma_0\, \beta(t) = 0, \qquad \theta'(t) = 0,
$$
which implies that $\alpha(t), \beta(t)$ are constant functions.
On the other hand, from the first equation above and the equation $ii)$ for $f(t)=t$,
we arrive at the following system
$$
\pi_0\, \alpha - \sigma_0\, \beta = 0,\qquad  \alpha_0\, \alpha + \pi_0\, \beta  = -2.
$$
Now, the condition $\pi_0^2+\sigma_0^2 = 4$ clearly implies that
$\alpha = -\frac{\sigma_0}{2}$ and $\beta = -\frac{\pi_0}{2}$,
and the result follows.
\end{proof}

In the following proposition we consider warped $\mathrm{G}_2$ manifolds in the class $\mathcal{X}_1$.
The result also gives another characterization of an $\mathrm{SU}(3)$ manifold
in the class $\mathcal{W}_1^+ \oplus \mathcal{W}_1^-$ in terms of a $\sin t$-cone.

\begin{proposition}\label{sec-X1}
There exists a nearly parallel warped $\mathrm{G}_2$-structure on $M=I_f \times L$ with $Scal(g_{\varphi})=42$
if and only if
the fiber $(L,\omega,\psi_+,\psi_-)$  belongs to $\mathcal{W}_1^+ \oplus \mathcal{W}_1^-$ and is Einstein with $Scal(g_{\omega, \psi_+})=30$.

Furthermore, in that case $M=(0,\pi) \times L$ is the $\sin t$-cone with the $\mathrm{G}_2$-structure
\begin{equation}\label{sine-t-cone-X1}
\varphi= \sin^2 t\, \omega \wedge dt + \sin^3 t \left(\cos (\varepsilon\,t+\rho)\, \psi_+ - \sin (\varepsilon\,t+\rho)\, \psi_-\right),
\end{equation}
where $\varepsilon= \pm 1$ and $\rho$ is given in terms of the (constant) torsion functions $\sigma_0,\pi_0$ of the $\mathrm{SU}(3)$-structure
by $\sigma_0=-2\,\cos \rho$ and $\pi_0=-2\,\sin \rho$.
\end{proposition}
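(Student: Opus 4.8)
The plan is to follow the strategy of Proposition~\ref{sec-P}, replacing the $t$-cone by the $\sin t$-cone and Ricci-flatness by the normalized positive Einstein condition $Scal(g_\varphi)=42$. For the ``if'' part I would assume $(L,\omega,\psi_+,\psi_-)$ lies in $\mathcal{W}_1^+\oplus\mathcal{W}_1^-$ and is Einstein with $Scal(g_{\omega,\psi_+})=30$. As in Proposition~\ref{sec-P}, the equations~\eqref{SU3torsion} (where now only $\pi_0,\sigma_0$ survive) force $\pi_0,\sigma_0$ to be constant, and~\eqref{scal6} gives $\pi_0^2+\sigma_0^2=4$, so one may choose $\rho$ with $\sigma_0=-2\cos\rho$, $\pi_0=-2\sin\rho$. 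On $I_f=(0,\pi)$ I would take $f(t)=\sin t$, $\alpha(t)=\cos(\varepsilon t+\rho)$, $\beta(t)=\sin(\varepsilon t+\rho)$ with $\varepsilon=\pm1$. Since $\pi_1=\nu_1=\pi_2=\sigma_2=\nu_3=0$, conditions \emph{iii)}--\emph{v)} and \emph{vii)}--\emph{ix)} of Corollary~\ref{corolary1} hold automatically, while \emph{ii)} and \emph{vi)} follow from the angle-addition formulas, the definition of $\rho$, and $\alpha\beta'-\beta\alpha'=\varepsilon$ (using $\cos(\varepsilon t)=\cos t$, $\sin(\varepsilon t)=\varepsilon\sin t$); hence $\tau_1=\tau_2=\tau_3=0$. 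Substituting into Theorem~\ref{torsiones} gives $\tau_0=4\varepsilon\neq0$, so~\eqref{sine-t-cone-X1} is strictly nearly parallel and~\eqref{scal7} yields $Scal(g_\varphi)=\frac{21}{8}\tau_0^2=42$.

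For the converse, suppose $\varphi$ is a warped $\mathrm{G}_2$-structure on $M=I_f\times L$ with $\tau_1=\tau_2=\tau_3=0$ and $Scal(g_\varphi)=42$; by~\eqref{scal7} this forces $\tau_0\neq0$, so the structure is strictly nearly parallel. In Corollary~\ref{corolary1}, comparing \emph{iii)} with \emph{vii)} gives $\pi_1=\nu_1=0$; the linear system \emph{v)}, \emph{viii)} in $(\pi_2,\sigma_2)$ has determinant $-(\alpha^2+\beta^2)=-1$, hence $\pi_2=\sigma_2=0$; and \emph{ix)} gives $\nu_3=0$. Therefore the fiber lies in $\mathcal{W}_1^+\oplus\mathcal{W}_1^-$ and, as in Proposition~\ref{sec-P}, its torsion functions $\pi_0,\sigma_0$ are constant.

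To identify the warping function and the Einstein constant of the fiber, I would use that a strict nearly parallel $\mathrm{G}_2$ manifold is Einstein with positive scalar curvature~\cite{Friedrich-K-M-S}: thus $(M,g_\varphi)$ is Einstein with normalized constant $6$, and since $\dim L=6$, equation~\eqref{q=1} together with Table~\ref{tableBesse} (the column $\lambda=d$, $\mu=d-1$) force $f(t)=\sin t$ on $(0,\pi)$ and $(L,g_{\omega,\psi_+})$ Einstein with $\mu=5$, i.e. $Scal(g_{\omega,\psi_+})=30$, equivalently $\pi_0^2+\sigma_0^2=4$ by~\eqref{scal6}. Finally, writing $\alpha=\cos\theta$, $\beta=\sin\theta$, $\sigma_0=-2\cos\rho$, $\pi_0=-2\sin\rho$, the remaining conditions \emph{ii)}, \emph{vi)} with $f=\sin t$ read $\cos(\theta-\rho)=\cos t$ and $\sin t\,\theta'=\sin(\theta-\rho)$; the first forces $\theta-\rho=\pm t$ modulo $2\pi$, hence $\theta'=\pm1$, which is automatically compatible with the second, so $\theta=\varepsilon t+\rho$ with $\varepsilon=\pm1$ and $\varphi$ is exactly~\eqref{sine-t-cone-X1}.

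The computations with Corollary~\ref{corolary1} and Theorem~\ref{torsiones} are routine. The delicate point is the rigidity of the cone in the converse: the torsion equations \emph{ii)} and \emph{vi)} alone only determine the warping function up to an amplitude (they admit a whole family $f=A\sin(\varepsilon t+\rho_0)$, with $\tau_0=\pm\tfrac{4}{A}$ once $\pi_0^2+\sigma_0^2=4$), and it is precisely the normalization $Scal(g_\varphi)=42$ on the total space---together with the normalization $Scal(g_{\omega,\psi_+})=30$ on the fiber---that pins down $A=1$; making this step precise is exactly what requires invoking the warped-product Einstein equations~\eqref{q=1} and Table~\ref{tableBesse} through the fact that a strict nearly parallel $\mathrm{G}_2$-structure is Einstein.
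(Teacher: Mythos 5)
Your proof is correct and follows essentially the same route as the paper: the forward direction verifies conditions \emph{ii)}--\emph{ix)} of Corollary~\ref{corolary1} for $f=\sin t$, $\theta=\varepsilon t+\rho$, and the converse extracts the class $\mathcal{W}_1^+\oplus\mathcal{W}_1^-$ from those conditions, pins down $f=\sin t$ and $\mu=5$ via Table~\ref{tableBesse}, and solves \emph{ii)}, \emph{vi)} to force $\theta(t)=\varepsilon t+\rho$. The only (harmless) difference is that you fix $\rho$ by $\sigma_0=-2\cos\rho$, $\pi_0=-2\sin\rho$ at the outset and deduce $\theta'=\pm1$ from $\cos(\theta-\rho)=\cos t$, whereas the paper first derives $(\theta')^2=1$ from $\alpha^2+\beta^2=1$ and then solves for $\rho$.
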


\begin{proof}
Suppose that the $\mathrm{SU}(3)$ manifold belongs to $\mathcal{W}_1^+ \oplus \mathcal{W}_1^-$ and is Einstein with constant~5.
Hence, a similar argument as in the first part of the proof of Proposition~\ref{sec-P} shows that
$\pi_0,\sigma_0$ are constant and $\pi_0^2+\sigma_0^2 = 4$. Now, the
$\mathrm{G}_2$-structure given by \eqref{sine-t-cone-X1}
satisfies the equations $ii)-ix)$ in Corollary~\ref{corolary1}.
Thus, we get a nearly parallel $\mathrm{G}_2$ manifold with Einstein constant equal to $6$.

Let us prove the converse. Suppose that there exists a warped product of $(L,\omega,\psi_+,\psi_-)$ given by~\eqref{3-form} that is
a nearly parallel $\mathrm{G}_2$ manifold with Einstein constant $6$,
i.e. the equations $ii)-ix)$ in Corollary~\ref{corolary1} are satisfied.
The equations \emph{iii)}, \emph{iv)} and \emph{ix)}
imply $\pi_1=\nu_1=\nu_3=0$, and from \emph{v)} and \emph{viii)} we get $\sigma_2=\pi_2=0$ because $\alpha^2(t)+\beta^2(t)=1$.
On the other hand, by Table~\ref{tableBesse}
we get that the warping function is necessarily $f(t)=\sin t$ and the metric induced by the $\mathrm{SU}(3)$-structure
is Einstein with constant $\mu=5$, which implies, by \eqref{scal6}, that $\pi_0^2+\sigma_0^2 = 4$.
Hence,
the manifold $(L,\omega,\psi_+,\psi_-)$ belongs to the $\mathrm{SU}(3)$-class $\mathcal{W}_1^+ \oplus \mathcal{W}_1^-$,
and the (constant) torsion functions $\pi_0,\sigma_0$ satisfy $\pi_0^2+\sigma_0^2 = 4$.

It remains to prove that the warped product $M$ must be necessarily the $\sin t$-cone given in~\eqref{sine-t-cone-X1}.
To see this, we consider the equations $ii)$ and $vi)$ for $f(t)=\sin t$
in Corollary~\ref{corolary1}. Writing $\alpha(t)=\cos\theta(t)$ and $\beta(t) =\sin\theta(t)$,
for some function $\theta(t)$, we get
$$
\sigma_0\, \alpha(t) + \pi_0\, \beta(t) = -2\, \cos t, \qquad
\pi_0\, \alpha(t) - \sigma_0\, \beta(t) = 2\, \theta'(t)\, \sin t.
$$
Using $\pi_0^2+\sigma_0^2 = 4$, we have
$$
\alpha(t) = -\frac{1}{2}\, \sigma_0\, \cos t + \frac{1}{2}\, \pi_0\, \theta'(t) \sin t, \qquad
\beta(t) = -\frac{1}{2}\, \pi_0\, \cos t - \frac{1}{2}\, \sigma_0\, \theta'(t) \sin t,
$$
and from $\alpha^2(t)+\beta^2(t)=1$ it follows that
$$
\left[ \left(\theta'(t)\right)^2 -1 \right] \sin^2 t = 0.
$$
This implies $\theta'(t)= \pm 1$ and thus $\theta(t)=\varepsilon\,t+\rho$, where $\varepsilon=\pm 1$
and $\rho$ is a constant which, as we show next, it is determined by $\sigma_0$ and $\pi_0$.
Indeed, the equations $ii)$ and $vi)$ are now written as
$$
\begin{aligned}
& (\sigma_0\,\cos\rho + \pi_0\,\sin\rho + 2) \cos t + \varepsilon (\pi_0\,\cos\rho - \sigma_0\,\sin\rho) \sin t = 0, \\[4pt]
& (\pi_0\,\cos\rho - \sigma_0\,\sin\rho) \cos t - \varepsilon (\sigma_0\,\cos\rho + \pi_0\,\sin\rho + 2) \sin t = 0.
\end{aligned}
$$
These equations imply
$$
\sigma_0\,\cos\rho + \pi_0\,\sin\rho =- 2, \qquad  \sigma_0\,\sin\rho -\pi_0\,\cos\rho =0,
$$
whose solution is $\sigma_0=-2\,\cos\rho$ and $\pi_0=-2\,\sin\rho$.
In conclusion, the $\mathrm{G}_2$-structure is given by \eqref{sine-t-cone-X1}
and the proof is complete.
\end{proof}

\begin{corollary}\label{cor}
Let $(L,\omega,\psi_+,\psi_-)$ be an $\mathrm{SU}(3)$ manifold in $\mathcal{W}_1^+ \oplus \mathcal{W}_1^-$ with $Scal(g_{\omega, \psi_+})=30$.
Then, the nearly parallel $\mathrm{G}_2$-structure on $M=I_f \times L$ given by \eqref{sine-t-cone-X1}
has torsion $\tau_0= 4\, \varepsilon$ $(\varepsilon= \pm 1)$.
\begin{proof}
It is a direct consequence of Proposition~\ref{sec-X1} and the expression of $\tau_0$ in Theorem~\ref{torsiones}, taking $f(t)= \sin t$,
$\alpha(t)= \cos(\varepsilon t +\rho)$, $\beta(t)= \sin(\varepsilon t +\rho)$, $\cos \rho = -\frac{\sigma_0}{2}$ and $\sin \rho = -\frac{\pi_0}{2}$.
\end{proof}
\end{corollary}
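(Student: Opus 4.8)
The plan is to apply Theorem~\ref{torsiones} directly, since by Proposition~\ref{sec-X1} we already know the precise form of the $\mathrm{G}_2$-structure in~\eqref{sine-t-cone-X1}. Concretely, I would substitute into the formula for $\tau_0$ the data $f(t)=\sin t$, hence $f'(t)=\cos t$; $\alpha(t)=\cos(\varepsilon t+\rho)$ and $\beta(t)=\sin(\varepsilon t+\rho)$, hence $\alpha'(t)=-\varepsilon\sin(\varepsilon t+\rho)$ and $\beta'(t)=\varepsilon\cos(\varepsilon t+\rho)$; and, since the fiber lies in $\mathcal{W}_1^+\oplus\mathcal{W}_1^-$, the only nonzero $\mathrm{SU}(3)$ torsion forms are the constants $\pi_0,\sigma_0$ with $\pi_0^2+\sigma_0^2=4$ and, from the last part of Proposition~\ref{sec-X1}, $\sigma_0=-2\cos\rho$, $\pi_0=-2\sin\rho$.

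The computation then proceeds as follows. From Theorem~\ref{torsiones},
\begin{equation*}
\tau_0=\frac{4}{7f}\bigl(3\pi_0\,\alpha-3\sigma_0\,\beta+f\alpha\beta'-f\beta\alpha'\bigr).
\end{equation*}
For the algebraic part, $3\pi_0\,\alpha-3\sigma_0\,\beta = -6\sin\rho\cos(\varepsilon t+\rho)+6\cos\rho\sin(\varepsilon t+\rho)=6\sin(\varepsilon t+\rho-\rho)=6\sin(\varepsilon t)=6\varepsilon\sin t$, using the angle-subtraction identity together with $\sin(\varepsilon t)=\varepsilon\sin t$ since $\varepsilon=\pm1$. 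For the derivative part, $f(\alpha\beta'-\beta\alpha')=\sin t\bigl(\varepsilon\cos^2(\varepsilon t+\rho)+\varepsilon\sin^2(\varepsilon t+\rho)\bigr)=\varepsilon\sin t$. Adding, the bracket equals $6\varepsilon\sin t+\varepsilon\sin t=7\varepsilon\sin t$, and since $f=\sin t$ we obtain $\tau_0=\frac{4}{7\sin t}\cdot 7\varepsilon\sin t=4\varepsilon$, as claimed.

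There is essentially no obstacle here: the statement is an immediate evaluation of an already-established formula on an already-identified structure. The only points requiring minor care are bookkeeping — keeping track of the sign conventions so that $\alpha=\cos(\varepsilon t+\rho)$, $\beta=\sin(\varepsilon t+\rho)$ rather than some variant, and correctly translating $\cos\rho,\sin\rho$ in terms of $\sigma_0,\pi_0$ as dictated by Proposition~\ref{sec-X1} — and noting that the $\sin t$ factors cancel, which is exactly what one expects since for a nearly parallel $\mathrm{G}_2$ manifold with $Scal(g_\varphi)=42$ the torsion $\tau_0$ must be the constant $\pm4$ (indeed $Scal=\frac{21}{8}\tau_0^2$ gives $\tau_0^2=16$). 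One could alternatively bypass the direct computation by quoting this scalar-curvature relation to conclude $\tau_0=\pm4$, but the sign $\varepsilon$ is most transparently pinned down by the explicit substitution above.
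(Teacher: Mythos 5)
Your proposal is correct and follows exactly the paper's argument: the paper's proof is the same one-line substitution of $f(t)=\sin t$, $\alpha(t)=\cos(\varepsilon t+\rho)$, $\beta(t)=\sin(\varepsilon t+\rho)$, $\cos\rho=-\sigma_0/2$, $\sin\rho=-\pi_0/2$ into the $\tau_0$ formula of Theorem~\ref{torsiones}, which you have simply carried out explicitly (and correctly). The closing remark about recovering $\tau_0^2=16$ from $Scal(g_\varphi)=\frac{21}{8}\tau_0^2=42$ is a sound consistency check, though as you note it does not determine the sign.
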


As a consequence of Propositions~\ref{sec-P} and \ref{sec-X1} we recover well-known
characterizations of a nearly-K\"ahler manifold $L$ given in~\cite{Bar} and in~\cite{FIMU} (see also \cite{BoyerGalicki}).
Here, and in what follows, we consider that the torsion of a nearly-K\"ahler manifold is $\sigma_0=-2$, so the Einstein constant equals 5.

\begin{corollary}\label{NK-characterizations}
Let $(L,\omega,\psi_+)$ be an $\mathrm{SU}(3)$ manifold. Then:
\begin{enumerate}
\item[{\rm (i)}] $L$ is nearly-K\"ahler if and only if the (usual) cone with the $\mathrm{G}_2$-structure
$$
\varphi= t^2\, \omega \wedge dt + t^3 \, \psi_+,
$$
is a parallel $\mathrm{G}_2$ manifold;
\item[{\rm (ii)}] $L$ is nearly-K\"ahler if and only if the sine-cone with the $\mathrm{G}_2$-structure
$$
\varphi= \sin^2 t\, \omega \wedge dt + \sin^3 t \left(\cos t\, \psi_+ - \sin t\, \psi_-\right),
$$
is a nearly parallel $\mathrm{G}_2$ manifold.
\end{enumerate}
\end{corollary}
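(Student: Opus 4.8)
The plan is to derive both equivalences as immediate specializations of Propositions~\ref{sec-P} and~\ref{sec-X1}, using the fact that a nearly-K\"ahler structure is precisely the $\mathrm{SU}(3)$-structure lying in $\mathcal{W}_1^-$ with the chosen normalization $\sigma_0 = -2$, $\pi_0 = 0$ (so that $Scal(g_{\omega,\psi_+}) = \tfrac{15}{2}\sigma_0^2 = 30$ and the Einstein constant equals $5$). In particular, a nearly-K\"ahler manifold does belong to $\mathcal{W}_1^+ \oplus \mathcal{W}_1^-$, so Propositions~\ref{sec-P} and~\ref{sec-X1} apply directly.

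For part (i): if $(L,\omega,\psi_+)$ is nearly-K\"ahler, then it lies in $\mathcal{W}_1^+ \oplus \mathcal{W}_1^-$ with $Scal(g_{\omega,\psi_+}) = 30$, so by Proposition~\ref{sec-P} the $t$-cone $M=(0,\infty)\times L$ carries a parallel warped $\mathrm{G}_2$-structure given by~\eqref{t-cone-P}. Substituting $\sigma_0 = -2$, $\pi_0 = 0$ into~\eqref{t-cone-P} yields exactly $\varphi = t^2\,\omega\wedge dt + t^3\,\psi_+$. Conversely, if this $\varphi$ is a parallel $\mathrm{G}_2$-structure, then it is in particular a parallel warped $\mathrm{G}_2$-structure (with $f(t)=t$, $\alpha=1$, $\beta=0$), so Proposition~\ref{sec-P} forces $(L,\omega,\psi_+,\psi_-)$ to lie in $\mathcal{W}_1^+ \oplus \mathcal{W}_1^-$ with constant torsion functions satisfying $\pi_0^2 + \sigma_0^2 = 4$; comparing the cone formula~\eqref{t-cone-P} with $\varphi = t^2\,\omega\wedge dt + t^3\,\psi_+$ gives $-\tfrac{\sigma_0}{2} = 1$ and $\tfrac{\pi_0}{2} = 0$, i.e. $\sigma_0 = -2$, $\pi_0 = 0$, which says precisely that $L$ is nearly-K\"ahler. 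Part (ii) runs identically, using Proposition~\ref{sec-X1} and~\eqref{sine-t-cone-X1} in place of Proposition~\ref{sec-P} and~\eqref{t-cone-P}: with $\sigma_0 = -2$, $\pi_0 = 0$ the constants are $\cos\rho = 1$, $\sin\rho = 0$, so $\rho = 0$, and choosing $\varepsilon = 1$ reduces~\eqref{sine-t-cone-X1} to $\varphi = \sin^2 t\,\omega\wedge dt + \sin^3 t(\cos t\,\psi_+ - \sin t\,\psi_-)$; conversely this form is a warped $\mathrm{G}_2$-structure with $f(t)=\sin t$, and Proposition~\ref{sec-X1} together with the matching of constants again pins down $\sigma_0 = -2$, $\pi_0 = 0$.

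There is essentially no obstacle here — the corollary is a bookkeeping consequence of the two preceding propositions once the nearly-K\"ahler normalization is fixed. The only point requiring a word of care is the converse direction: one must observe that the displayed $\varphi$ in each case is indeed of the warped form~\eqref{3-form} (with the stated $f,\alpha,\beta$ and $\alpha^2+\beta^2=1$) so that the propositions are applicable, and then read off the values of $\sigma_0,\pi_0$ from the explicit cone formulas. I would also remark, for completeness, that in part (ii) the sign choice $\varepsilon=-1$ would give the structure with $\psi_\pm$ replaced appropriately, but this does not affect the nearly-K\"ahler conclusion since only $\pi_0,\sigma_0$ and their constancy matter.
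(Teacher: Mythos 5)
Your proof is correct and follows essentially the same route as the paper: both directions of (i) and (ii) are obtained by specializing Propositions~\ref{sec-P} and~\ref{sec-X1} to the nearly-K\"ahler normalization $\sigma_0=-2$, $\pi_0=0$ (giving $\rho=0$, $\varepsilon=1$ in \eqref{sine-t-cone-X1}). The paper's own proof is just the forward substitution; your added care with the converse direction is consistent with the ``if and only if'' form of those propositions and introduces no new issues.
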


\begin{proof}
For (i), just take in \eqref{t-cone-P} the values $\sigma_0=-2$ and $\pi_0=0$.
For (ii) we take $\varepsilon=1$ in \eqref{sine-t-cone-X1} and $\rho=0$, because $-2=\sigma_0=-2\,\cos \rho$ and $0=\pi_0=-2\,\sin \rho$.
\end{proof}

Recall that $\mathrm{G}_2$ manifolds in the class $\mathcal{X}_2 \oplus \mathcal{X}_3$ are characterized
in terms of the torsion forms by the conditions $\tau_0=\tau_1=0$.

\begin{proposition}\label{2+3-G2}
A warped $\mathrm{G}_2$ manifold $M$ in the class
$\mathcal{X}_2 \oplus \mathcal{X}_3$ is Einstein if and only if it is a parallel $\mathrm{G}_2$ manifold.
\end{proposition}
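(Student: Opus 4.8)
The plan is to use the explicit torsion description of Theorem~\ref{torsiones} and Corollary~\ref{corolary1} together with the warped-product Einstein equation \eqref{q=1}. Assume $M=I_f\times L$ is a warped $\mathrm{G}_2$ manifold in the class $\mathcal{X}_2\oplus\mathcal{X}_3$, so $\tau_0=\tau_1=0$. From $\tau_1=0$ we get conditions $ii)$ and $iii)$ of Corollary~\ref{corolary1}, i.e. $\sigma_0\,\alpha+\pi_0\,\beta+2f'=0$ and $\pi_1=-\nu_1$. The parallel direction is trivial (a parallel $\mathrm{G}_2$ manifold is Ricci-flat, hence Einstein), so only the forward implication needs work. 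First I would invoke the general warped-product formula around \eqref{q=1}: since $M$ is Einstein, the fiber $(L,g_{\omega,\psi_+})$ is Einstein with some constant $\mu$, and $(f')^2+\tfrac{\lambda}{6}f^2=\tfrac{\mu}{5}$ holds. The scalar curvature identity \eqref{scal7} for a class-$\mathcal{X}_2\oplus\mathcal{X}_3$ structure reads $Scal(g_\varphi)=-\tfrac12|\tau_2|^2-\tfrac12|\tau_3|^2$, so the Einstein constant $\lambda$ satisfies $7\lambda\le 0$, and $\lambda=0$ forces $\tau_2=\tau_3=0$, i.e. parallelism. Thus the crux is to rule out $\lambda<0$.

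To exploit $\lambda<0$ I would first extract more structure from the hypotheses. Using \eqref{scal6} for the fiber together with the relations $\pi_1=-\nu_1$, $\tau_0=0$ (condition $i)$: $3\pi_0\alpha-3\sigma_0\beta+f\alpha\beta'-f\beta\alpha'=0$), one obtains an expression for $\mu=Scal(g_{\omega,\psi_+})$ purely in terms of $\pi_0,\sigma_0,\nu_1,\pi_2,\sigma_2,\nu_3$. The key point I expect is that differentiating the constraint $ii)$, i.e. $\sigma_0\alpha+\pi_0\beta=-2f'$, and combining with $i)$ will pin down how $\alpha,\beta$ — and hence the $t$-dependence — are forced, since $\sigma_0,\pi_0$ are functions only on $L$ while $f,\alpha,\beta$ depend only on $t$. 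Writing $\alpha=\cos\theta(t)$, $\beta=\sin\theta(t)$ as in the proofs of Propositions~\ref{sec-P} and~\ref{sec-X1}, conditions $i)$ and $ii)$ become a linear system in $(\sigma_0,\pi_0)$ with $t$-dependent coefficients; consistency across all $t$ (using that $\sigma_0,\pi_0$ are $t$-independent) should force either $\sigma_0=\pi_0=0$ together with $\theta'=0$, or a cone-type relation. The case $\sigma_0=\pi_0=0$ with $\theta'$ constant, combined with the Einstein equation \eqref{q=1} and the list of solutions in Table~\ref{tableBesse}, should lead to a contradiction with $\lambda<0$ unless all remaining torsion forms vanish.

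The expected main obstacle is handling the $\tau_2,\tau_3$ contributions: condition $iii)$ ($\pi_1=-\nu_1$) alone does not kill $\nu_1$, $\pi_2$, $\sigma_2$, $\nu_3$, so $\tau_2$ and $\tau_3$ are generically nonzero as forms on $L$, and one must show that the warped-product Einstein equations — which involve the Hessian of $f$ and the fiber Ricci tensor, not just scalar curvature — are incompatible with $\lambda<0$ in this class. Concretely, the $B$-direction Einstein equation $\lambda = -d\,f''/f$ (with $d=6$) together with the fiber equation forces a precise matching between $f$ and the fiber geometry; I would check this against each admissible $f$ from Table~\ref{tableBesse} ($\cosh t$, $e^t$, $\sinh t$, $t$), showing that $\tau_0=\tau_1=0$ fails unless the structure degenerates, except in the Ricci-flat $f(t)=t$ case where $\tau_2=\tau_3=0$ is forced and one recovers Proposition~\ref{sec-P}. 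If a cleaner argument is available, it would be to show directly from $\tau_0=\tau_1=0$ and Theorem~\ref{torsiones} that $\tau_2,\tau_3$ can be expressed so that $|\tau_2|^2+|\tau_3|^2$ is (up to positive constants) the fiber scalar curvature, reducing $Scal(g_\varphi)\le 0$ to $Scal(g_{\omega,\psi_+})\ge 0$ and then applying Theorem~\ref{teorema-del-Besse} to conclude the product is Ricci-flat, hence parallel.
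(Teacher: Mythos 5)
Your proposal follows essentially the same route as the paper: reduce to negative Einstein constant via \eqref{scal7} (the $\lambda=0$ case giving parallelism immediately), then exploit that $\sigma_0,\pi_0$ are functions on $L$ while $f,\alpha,\beta$ depend only on $t$ in conditions $i)$ and $ii)$ of Corollary~\ref{corolary1}, which forces $\pi_0,\sigma_0$ to be constants and yields an ODE system for $\theta$ and $f$ (namely $\theta'(t)=C_0 f(t)^{-7}$ and $f^{13}f''=C_0^2/6$, or $f'$ constant when $\theta'=0$) that is incompatible with $f(t)=\cosh t$, $e^t$, $\sinh t$. The only caveat is that the contradiction comes entirely from $i)$ and $ii)$, so the anticipated difficulties with $\tau_2,\tau_3$ and the Hessian equations never arise, and the fallback via Theorem~\ref{teorema-del-Besse} would in any case be unavailable because the warped products considered here need not be complete.
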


\begin{proof}
From Corollary~\ref{corolary1}, if the $\mathrm{G}_2$-structure
belongs to the class $\mathcal{X}_2 \oplus \mathcal{X}_3$ then the conditions $i), ii)$ and $iii)$ are satisfied. In addition,
an Einstein $\mathrm{G}_2$ manifold with $\tau_0=\tau_1=0$ has non-positive
Einstein constant by \eqref{scal7}. If such constant is zero then the $\mathrm{G}_2$-structure is parallel.
So, in what follows we suppose that the Einstein constant is negative, which after scaling we consider to be $-6$, and so by Table~\ref{tableBesse} the possible functions are $f(t)=\cosh t$, $e^t$, or $\sinh t$.
Next we will prove that there is no solution in any of
these cases.

From $\alpha^2(t)+\beta^2(t)=1$ we can write
$\alpha(t) =\cos \theta(t)$ and $\beta(t) =\sin \theta(t)$,
for some real-valued function $\theta(t)$. Thus, $\alpha(t)\beta'(t)-\beta(t) \alpha'(t)=\theta'(t)$,
and equations $i)$ and $ii)$ in Corollary~\ref{corolary1} become:
\begin{enumerate}
\item[{\rm $i)$}] $3\,\pi_0\, \alpha(t) - 3\,\sigma_0\, \beta(t) +\theta'(t)\, f(t) = 0$,
\item[{\rm $ii)$}] $\sigma_0\,\alpha(t) + \pi_0\,\beta(t) + 2f'(t) = 0$.
\end{enumerate}

Multiplying $i)$ by $\alpha(t)$, $ii)$ by $3\beta(t)$, and summing the resulting equations, we get
$$
3\,\pi_0 = 3\,\pi_0 (\alpha^2(t)+\beta^2(t)) = -\theta'(t)\,\alpha(t)\,f(t) - 6\,\beta(t)\,f'(t).
$$
Since $\pi_0$ is a function on the fiber manifold $L$ and the right hand side of the equation only depends on $t$,
necessarily there exists a constant $C_1$ such that
\begin{equation}\label{uno}
\theta'(t)\,\alpha(t)\,f(t) + 6\,\beta(t)\,f'(t) = C_1.
\end{equation}

Now, multiplying $i)$ by $-\beta(t)$, $ii)$ by $3\alpha(t)$, and summing the resulting equations, we get
$$
3\,\sigma_0 = 3\,\sigma_0 (\alpha^2(t)+\beta^2(t)) = \theta'(t)\,\beta(t)\,f(t) - 6\,\alpha(t)\,f'(t).
$$
Hence, there exists a constant $C_2$ such that
\begin{equation}\label{dos}
\theta'(t)\,\beta(t)\,f(t) - 6\,\alpha(t)\,f'(t) = C_2.
\end{equation}

Taking the product of \eqref{dos} by $\alpha(t)$, the product of \eqref{uno} by $\beta(t)$, and subtracting
the equations, we get $6\,f'(t)=C_1\,\beta(t)-C_2\,\alpha(t)$.
In a similar way, taking the product of \eqref{dos} by $\beta(t)$, the product of \eqref{uno} by $\alpha(t)$, and summing
the equations, we get $\theta'(t)\,f(t)=C_1\,\alpha(t)+C_2\,\beta(t)$.
That is, we arrive at the following system:
\begin{equation}\label{1}
6\,f'(t)=C_1\,\beta(t)-C_2\,\alpha(t),
\end{equation}
\begin{equation}\label{2}
\theta'(t)\,f(t)=C_1\,\alpha(t)+C_2\,\beta(t).
\end{equation}

Taking the derivative of \eqref{2} and using \eqref{1} we get
$\theta''(t)\,f(t)+\theta'(t)\,f'(t) = C_1\,\alpha'(t)+C_2\,\beta'(t)= -\theta'(t) (C_1\,\beta(t)-C_2\,\alpha(t))= -6\, \theta'(t) \,f'(t)$,
that is
$$
\theta''(t)\,f(t)+7\,\theta'(t)\,f'(t)=0.
$$
Notice that $\theta'(t) =0$ implies that the functions $\alpha(t)$ and $\beta(t)$ are constant, and then equation $ii)$
cannot be solved for $f(t)=\cosh t, e^t$, or $\sinh t$. Therefore, $\theta'(t) \not=0$ and we can write the previous equation as
$$
\left( \ln \theta'(t) + 7 \, \ln f(t)  \right)'=0.
$$
Hence, there exists a positive constant $C_0$ such that
\begin{equation}\label{3}
\theta'(t)= C_0 \, f(t)^{-7}.
\end{equation}

On the other hand, taking the derivative of \eqref{1} and using \eqref{2} we get
$6\,f''(t)=C_1\,\beta'(t)-C_2\,\alpha'(t) = \theta'(t) (C_1\,\alpha(t)+C_2\,\beta(t))= (\theta'(t))^2 \,f(t)$,
that is
$$
6\,f''(t) = (\theta'(t))^2 \,f(t).
$$
Now, using \eqref{3}, we have $6\,f''(t) = C_0^2 \, f(t)^{-13}$, i.e.
$$
f(t)^{13} f''(t) = C_0^2/6,
$$
which never holds for the functions $f(t)=\cosh t, e^t$, or $\sinh t$. In conclusion, the system $i)-iii)$ is never satisfied.
\end{proof}

Since the class $\mathcal{X}_2 \oplus \mathcal{X}_3$ contains the class of closed and the class of coclosed of pure type
$\mathrm{G}_2$ manifolds, from Proposition~\ref{2+3-G2} we get

\begin{corollary}\label{no-existes-2-ni-3}
There does not exist any $\mathrm{SU}(3)$ manifold $(L,\omega,\psi_+,\psi_-)$ for which the warped $\mathrm{G}_2$ manifold
$M= I_f \times L$ is Einstein closed or coclosed of pure type,
unless it is parallel.
\end{corollary}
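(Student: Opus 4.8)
The plan is to deduce this immediately from Proposition~\ref{2+3-G2}, using only the torsion-form descriptions of the relevant $\mathrm{G}_2$-classes recorded in Table~\ref{classes-G2}. First I would recall that a closed $\mathrm{G}_2$ manifold is by definition one in the class $\mathcal{X}_2$ (characterized by $\tau_0 = \tau_1 = \tau_3 = 0$), while a coclosed $\mathrm{G}_2$ manifold of pure type is one in the class $\mathcal{X}_3$ (characterized by $\tau_0 = \tau_1 = \tau_2 = 0$). Since $\mathcal{X}_2 \subseteq \mathcal{X}_2 \oplus \mathcal{X}_3$ and $\mathcal{X}_3 \subseteq \mathcal{X}_2 \oplus \mathcal{X}_3$, in either situation the warped $\mathrm{G}_2$ manifold $M = I_f \times L$ lies in the class $\mathcal{X}_2 \oplus \mathcal{X}_3$.

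I would then simply invoke Proposition~\ref{2+3-G2}, which asserts that a warped $\mathrm{G}_2$ manifold in the class $\mathcal{X}_2 \oplus \mathcal{X}_3$ is Einstein precisely when it is parallel. Hence if $M = I_f \times L$ is Einstein and closed, or Einstein and coclosed of pure type, it must be parallel, which is the claim. The exceptional clause is sharp: a parallel $\mathrm{G}_2$-structure has all torsion forms zero, so it belongs trivially to each of $\mathcal{X}_2$ and $\mathcal{X}_3$.

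At this level there is essentially no obstacle; the entire content sits in Proposition~\ref{2+3-G2}, where the Einstein condition in the class $\mathcal{X}_2 \oplus \mathcal{X}_3$ is reduced, via conditions $i)$--$iii)$ of Corollary~\ref{corolary1} together with the list of warping functions in Table~\ref{tableBesse}, to the differential equation $f(t)^{13} f''(t) = C_0^2/6$, which admits no solution among $\cosh t$, $e^t$ and $\sinh t$. The only points requiring (trivial) care are the two inclusions of classes above and the observation that the Ricci-flat parallel case is not being excluded but is exactly the exception.
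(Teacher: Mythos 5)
Your proposal is correct and is exactly the paper's argument: the corollary is stated there as an immediate consequence of Proposition~\ref{2+3-G2}, since the classes $\mathcal{X}_2$ (closed) and $\mathcal{X}_3$ (coclosed of pure type) are both contained in $\mathcal{X}_2\oplus\mathcal{X}_3$. Nothing further is needed.
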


\begin{remark}\label{conjetura}
{\rm
As we recall in the introduction, it is an open question if an Einstein closed $\mathrm{G}_2$ manifold must be parallel.
Several authors have proved that this question has an affirmative answer in different particular situations:
for compact (and more generally, for $\ast$-Einstein) manifolds in \cite{CI1,CI},
for non-negative scalar curvature in \cite{Br}, and
for solvmanifolds with left invariant $\mathrm{G}_2$-structure in \cite{FFM}.
The corollary above shows that the answer is also affirmative in the class of warped $\mathrm{G}_2$ manifolds.
}
\end{remark}

Now, we turn our attention to Einstein locally conformal parallel $\mathrm{G}_2$ manifolds, i.e. Einstein manifolds
in the class $\mathcal{X}_4$.

\begin{proposition}\label{sec-X4-negative}
There exists an Einstein locally conformal parallel warped $\mathrm{G}_2$-structure on $M=I_f\times L$ with $Scal(g_{\varphi})=-42$
if and only the fiber $(L,\omega,\psi_+,\psi_-)$  is one of the following:

\smallskip

\noindent $\bullet$ $L$ is Calabi-Yau, and then $M=\mathbb{R}\times L$ is the exponential-cone with $\mathrm{G}_2$-structure
$\varphi= e^{2t} \omega \wedge dt + e^{3t} \psi_+$, or

\smallskip

\noindent $\bullet$ $L$ belongs to $\mathcal{W}_1^+ \oplus \mathcal{W}_1^-$ with $Scal(g_{\omega,\psi_+})=30$,
and then $M= (0,\infty)\times L$ is the hyperbolic sine-cone with $\mathrm{G}_2$-structure
$\varphi = \sinh^2 t\, \omega \wedge dt + \sinh^3 t \left(\varepsilon\frac{\sigma_0}{2} \, \psi_+ - \varepsilon\frac{\pi_0}{2} \, \psi_-\right)$,
where $\varepsilon=\pm 1$ and $\sigma_0,\pi_0$ are the (constant) torsion functions of the $\mathrm{SU}(3)$-structure,
which satisfy $\pi_0^2+\sigma_0^2 = 4$.
\end{proposition}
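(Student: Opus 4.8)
The strategy is to pass everything through Corollary~\ref{corolary1}, which rewrites the defining conditions $\tau_0=\tau_2=\tau_3=0$ of the class $\mathcal{X}_4$ as a system in $f$, $\alpha$, $\beta$ and the $\mathrm{SU}(3)$-torsion forms of the fibre. For the ``only if'' direction, suppose $(M=I_f\times L,\varphi)$ is an Einstein warped $\mathrm{G}_2$ manifold of class $\mathcal{X}_4$ with $Scal(g_\varphi)=-42$. Comparing \emph{iv)} and \emph{vii)} gives $2\nu_1=\nu_1$, hence $\nu_1=0$ and $\pi_1=0$; the pair \emph{v)}, \emph{viii)} is a linear system for $(\pi_2,\sigma_2)$ with coefficient determinant $-(\alpha^2+\beta^2)=-1$, so $\pi_2=\sigma_2=0$; and \emph{ix)} gives $\nu_3=0$. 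Thus the only possibly non-zero $\mathrm{SU}(3)$-torsion forms are $\pi_0,\sigma_0$, i.e. $L\in\mathcal{W}_1^+\oplus\mathcal{W}_1^-$ (with $L$ Calabi--Yau exactly when $\pi_0=\sigma_0=0$), and the reduced structure equations \eqref{SU3torsion} force $\pi_0,\sigma_0$ to be constant, as in the proof of Proposition~\ref{sec-P}. Finally, writing $A=\pi_0\alpha-\sigma_0\beta$ and $B=f(\alpha\beta'-\beta\alpha')$, conditions \emph{i)} and \emph{vi)} read $3A+B=0$ and $A-2B=0$, forcing $A=B=0$; since $f$ never vanishes on $I_f$, this gives $\alpha\beta'-\beta\alpha'=0$ and $\pi_0\alpha=\sigma_0\beta$, and putting $\alpha=\cos\theta$, $\beta=\sin\theta$ (so $\alpha\beta'-\beta\alpha'=\theta'$) shows $\alpha,\beta$ are constant.

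Next I would invoke the warped-product Einstein theory. Since $M$ is Einstein with $Scal(g_\varphi)=-42$, i.e. Einstein constant $\lambda=-6=-d$ with $d=6$, the O'Neill equations (which, for $\dim B=1$, reduce to \eqref{q=1}) say that $(L,g_{\omega,\psi_+})$ is Einstein with some normalized constant $\mu$ and that $f$ is the corresponding function in Table~\ref{tableBesse}. By \eqref{scal6}, $Scal(g_{\omega,\psi_+})=\tfrac{15}{2}(\pi_0^2+\sigma_0^2)\ge 0$, so $\mu\ge0$, leaving only two rows: $\mu=0$ with $f(t)=e^t$ on $\mathbb{R}$, or $\mu=5$ with $f(t)=\sinh t$ on $(0,\infty)$. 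In the first case $\pi_0=\sigma_0=0$, so $L$ is Calabi--Yau; since a Calabi--Yau structure carries a whole circle of holomorphic volume forms, any constant $\alpha\psi_+-\beta\psi_-$ with $\alpha^2+\beta^2=1$ is the real part of one of them, so after renaming we may take $\alpha=1$, $\beta=0$ and $\varphi=e^{2t}\,\omega\wedge dt+e^{3t}\,\psi_+$. In the second case $Scal(g_{\omega,\psi_+})=30$, i.e. $\pi_0^2+\sigma_0^2=4$, and solving $\pi_0\alpha=\sigma_0\beta$ together with $\alpha^2+\beta^2=1$, $\pi_0^2+\sigma_0^2=4$ gives $(\alpha,\beta)=\varepsilon\big(\tfrac{\sigma_0}{2},\tfrac{\pi_0}{2}\big)$, $\varepsilon=\pm1$, which is exactly the stated structure on the hyperbolic sine-cone.

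For the ``if'' direction I would reverse the computation. In either case the fibre has only the constant torsion forms $\pi_0,\sigma_0$ and $\alpha,\beta$ are the constants above, so substituting into Corollary~\ref{corolary1} gives $\tau_0=\tau_2=\tau_3=0$: \emph{i)}, \emph{v)}, \emph{vi)}, \emph{viii)}, \emph{ix)} hold identically and \emph{iv)}, \emph{vii)} hold because $\pi_1=\nu_1=0$ (conditions \emph{ii)}, \emph{iii)} being irrelevant), so $\varphi\in\mathcal{X}_4$. The fibre metric is Einstein: this is clear when $L$ is Calabi--Yau, and otherwise follows from the fact that a $\mathcal{W}_1^+\oplus\mathcal{W}_1^-$ structure with constant $\pi_0,\sigma_0$ becomes nearly K\"ahler after a constant rotation $\Psi\mapsto e^{i\theta}\Psi$, which leaves $g_{\omega,\psi_+}$ unchanged; one then reads off $\mu=Scal(g_{\omega,\psi_+})/6\in\{0,5\}$. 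Plugging $(f,\mu)=(e^t,0)$ and $(\sinh t,5)$ into \eqref{q=1} with $d=6$ gives $\lambda=-6$, so $M$ is Einstein with $Scal(g_\varphi)=-42$; finally, since $\tau_0=\tau_2=\tau_3=0$ while $Scal(g_\varphi)\neq0$, \eqref{scal7} forces $\tau_1\neq0$, so $\varphi$ is genuinely locally conformal parallel rather than parallel.

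The step I expect to be the main obstacle, beyond the bookkeeping in Corollary~\ref{corolary1}, is the link between ``$M$ Einstein'' and ``fibre Einstein''. In the ``only if'' part this is provided for free by the O'Neill formulas underlying \eqref{q=1}, but in the ``if'' part one must know a priori that a normalized $\mathcal{W}_1^+\oplus\mathcal{W}_1^-$ fibre has Einstein induced metric, for which the reduction to the nearly K\"ahler case is essential; a secondary subtlety is the $S^1$-ambiguity of the Calabi--Yau holomorphic volume form, which is what allows the normalization $\alpha=1$, $\beta=0$ in the exponential-cone case.
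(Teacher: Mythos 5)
Your proposal is correct and follows essentially the same route as the paper: Corollary~\ref{corolary1} to kill all $\mathrm{SU}(3)$-torsion except $\pi_0,\sigma_0$, Table~\ref{tableBesse} together with \eqref{scal6} to restrict to $f(t)=e^t$ or $\sinh t$, and equations \emph{i)}, \emph{vi)} to solve for $(\alpha,\beta)$. You merely spell out details the paper leaves implicit (the linear-system arguments, the $S^1$-rotation normalizing the Calabi--Yau case, and the observation that a normalized $\mathcal{W}_1^+\oplus\mathcal{W}_1^-$ fibre is automatically Einstein), all of which are sound.
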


\begin{proof}
Suppose there is such a warped product. Using that $\tau_0=\tau_2=\tau_3=0$ and Corollary~\ref{corolary1},
similarly to the proof of Proposition~\ref{sec-P} we arrive at the fact that
$L$ belongs to $\mathcal{W}_1^+ \oplus \mathcal{W}_1^-$, so the torsion reduces to $\sigma_0,\pi_0$.
If they vanish then $L$ is Calabi-Yau and the warped product is the exponential-cone.
If the torsion of $L$ is non-zero then the scalar curvature of $L$ is equal to $30$ and $f(t)=\sinh t$.
The equations $i)$ and $vi)$ in Corollary~\ref{corolary1} give the solutions $(\alpha,\beta)=(\varepsilon\frac{\sigma_0}{2},\varepsilon\frac{\pi_0}{2})$,
where $\varepsilon=\pm 1$.
\end{proof}

Similarly to the previous proposition we have:

\begin{proposition}\label{sec-X4-zero-positive}
Let $(L,\omega,\psi_+,\psi_-)$ be an $\mathrm{SU}(3)$ manifold. Then:
\begin{enumerate}
\item[{\rm (i)}]
There exists a Ricci flat locally conformal parallel warped $\mathrm{G}_2$-structure on $M=I_f\times L$
if and only the fiber $L$ belongs to $\mathcal{W}_1^+ \oplus \mathcal{W}_1^-$,
and then $M= (0,\infty)\times L$ is the cone with $\mathrm{G}_2$-structure
$\varphi = t^2\, \omega \wedge dt + t^3 \left(\varepsilon\frac{\sigma_0}{2} \, \psi_+ - \varepsilon\frac{\pi_0}{2} \, \psi_-\right)$,
where $\varepsilon=\pm 1$.  In addition, $M$ is parallel if and only if $\varepsilon=- 1$;
\item[{\rm (ii)}]
There exists an Einstein locally conformal parallel warped $\mathrm{G}_2$-structure on $M=I_f\times L$ with $Scal(g_{\varphi})=42$
if and only the fiber $L$ belongs to $\mathcal{W}_1^+ \oplus \mathcal{W}_1^-$,
and then $M= (0,\pi)\times L$ is the $\sin t$-cone with $\mathrm{G}_2$-structure
$\varphi = \sin^2 t\, \omega \wedge dt + \sin^3 t \left(\varepsilon\frac{\sigma_0}{2} \, \psi_+ - \varepsilon\frac{\pi_0}{2} \, \psi_-\right)$,
where $\varepsilon=\pm 1$.
\end{enumerate}
\end{proposition}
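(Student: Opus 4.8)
The plan is to run the same scheme as the proof of Proposition~\ref{sec-X4-negative}, treating the cases $Scal(g_\varphi)=0$ and $Scal(g_\varphi)=42$ together. Since the class $\mathcal{X}_4$ is cut out by $\tau_0=\tau_2=\tau_3=0$, Corollary~\ref{corolary1} says that a warped $\mathrm{G}_2$-structure of this type satisfies equations $i)$ and $iv)$--$ix)$. First I would read off the algebra: $iv)$ and $vii)$ give $2\nu_1=\nu_1$, so $\nu_1=\pi_1=0$; the combinations $\beta\cdot v)+\alpha\cdot viii)$ and $\alpha\cdot v)-\beta\cdot viii)$ give $(\alpha^2+\beta^2)\pi_2=(\alpha^2+\beta^2)\sigma_2=0$, so $\pi_2=\sigma_2=0$; and $ix)$ gives $\nu_3=0$. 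Hence $(L,\omega,\psi_+,\psi_-)$ lies in $\mathcal{W}_1^+\oplus\mathcal{W}_1^-$, and exactly as in Proposition~\ref{sec-P} the equations \eqref{SU3torsion} collapse to $d\omega=-\tfrac32\sigma_0\psi_++\tfrac32\pi_0\psi_-$ and $d\psi_\pm=\pi_0\omega^2$, resp.\ $\sigma_0\omega^2$; differentiating these and using $\omega\wedge\psi_\pm=0$ forces $\pi_0,\sigma_0$ to be constant, while \eqref{scal6} reduces to $Scal(g_{\omega,\psi_+})=\tfrac{15}{2}(\pi_0^2+\sigma_0^2)$.

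Next I would bring in the Einstein hypothesis. As $g_\varphi=f^2g_{\omega,\psi_+}+dt^2$ is a warped product over the one-dimensional base $I_f$, the O'Neill criterion recalled before Theorem~\ref{teorema-del-Besse} forces the fiber to be Einstein and equation \eqref{q=1} to hold; after a homothety, Table~\ref{tableBesse} with $d=6$ then leaves exactly the two possibilities compatible with $Scal(g_\varphi)\geq 0$: the Ricci-flat case $\lambda=0$, with $f(t)=t$ and fiber constant $\mu=5$, and the case $\lambda=6$ (i.e.\ $Scal(g_\varphi)=42$), with $f(t)=\sin t$ and $\mu=5$. In both cases $\mu=5$ means $\tfrac{15}{2}(\pi_0^2+\sigma_0^2)=30$, i.e.\ $\pi_0^2+\sigma_0^2=4$ (so the torsion is nonzero and $Scal(g_{\omega,\psi_+})=30$), and $I_f=(0,\infty)$, resp.\ $(0,\pi)$.

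It then remains to pin down $\alpha,\beta$. Writing $\alpha=\cos\theta$, $\beta=\sin\theta$ so that $\alpha\beta'-\beta\alpha'=\theta'$, equations $i)$ and $vi)$ of Corollary~\ref{corolary1} become $3u+f\theta'=0$ and $u-2f\theta'=0$ with $u:=\pi_0\cos\theta-\sigma_0\sin\theta$; eliminating $u$ gives $7f\theta'=0$, hence $\theta$ (and so $\alpha,\beta$) is constant and $\pi_0\alpha=\sigma_0\beta$. Together with $\alpha^2+\beta^2=1$ and $\pi_0^2+\sigma_0^2=4$ this yields $(\alpha,\beta)=(\varepsilon\tfrac{\sigma_0}{2},\varepsilon\tfrac{\pi_0}{2})$ with $\varepsilon=\pm 1$, which is exactly the displayed $\mathrm{G}_2$-structure. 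For the converse I would substitute these data into Corollary~\ref{corolary1} to verify $\tau_0=\tau_2=\tau_3=0$, and check that \eqref{q=1} with $\mu=5$ forces $\lambda=0$ when $f(t)=t$ and $\lambda=6$ when $f(t)=\sin t$, so that $M$ is Einstein with the claimed scalar curvature. Finally, for part (i), $M$ is parallel iff in addition $\tau_1=0$; since $\nu_1=\pi_1=0$ and $f(t)=t$, Theorem~\ref{torsiones} gives $\tau_1=\tfrac{1}{2t}(\pi_0\beta+\sigma_0\alpha+2)\,dt=\tfrac{\varepsilon+1}{t}\,dt$, which vanishes precisely when $\varepsilon=-1$.

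Everything after the first paragraph is routine ODE bookkeeping against Table~\ref{tableBesse}; the one point deserving care — and the thing I most want to get right — is that membership of the fiber in $\mathcal{W}_1^+\oplus\mathcal{W}_1^-$ already forces it to be Einstein, which is what licenses the use of the warped-product criterion and of Table~\ref{tableBesse} in the converse. I would handle this either from the Bedulli--Vezzoni Ricci tensor, or more directly via the constant phase rotation $\Psi\mapsto e^{i\rho}\Psi$, which turns the structure into a nearly-K\"ahler one with torsion $\sqrt{\pi_0^2+\sigma_0^2}$ without altering the underlying metric.
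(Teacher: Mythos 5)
Your proposal is correct and follows essentially the same route as the paper, which proves this proposition by the scheme of Proposition~\ref{sec-X4-negative}: reduce the torsion via Corollary~\ref{corolary1} to place the fiber in $\mathcal{W}_1^+\oplus\mathcal{W}_1^-$, fix $f$ from Table~\ref{tableBesse}, and solve equations $i)$ and $vi)$ for the constant pair $(\alpha,\beta)=(\varepsilon\tfrac{\sigma_0}{2},\varepsilon\tfrac{\pi_0}{2})$. Your explicit justification (via the constant phase rotation of $\Psi$) that membership in $\mathcal{W}_1^+\oplus\mathcal{W}_1^-$ already forces the fiber metric to be Einstein is a point the paper leaves implicit, and it is correctly handled.
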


\end{subsection}

\begin{subsection}{Einstein coclosed $\mathrm{G}_2$ manifolds}\label{sec-coclosed}

In this section we construct Einstein coclosed $\mathrm{G}_2$-structures
(i.e. of type $\mathcal{X}_1 \oplus \mathcal{X}_3$) on warped products of
$\mathrm{SU}(3)$ manifolds in the class
$\mathcal{W}_1^+ \oplus \mathcal{W}_1^- \oplus \mathcal{W}_3$.
We apply the construction to the manifold $S^3 \times S^3$ endowed with one of the $\mathrm{SU}(3)$-structures described in \cite{Sc}.

\begin{theorem}\label{coclosed-einstein}
Let $(L, \omega, \psi_+, \psi_-)$ be an Einstein $\mathrm{SU}(3)$-structure of type $\mathcal{W}_1^+ \oplus \mathcal{W}_1^- \oplus \mathcal{W}_3$
with $Scal(g_{\omega,\psi_+})=30$.
Then, the torsion functions $\pi_0,\sigma_0$ are constant, and $C=\sqrt{\pi_0^2+\sigma_0^2}$ satisfies $C\geq 2$.

Moreover, let 
$a=\arccos (\sigma_0/C)$
and consider 
$\theta(t)$ as follows:
\begin{enumerate}
\item[{\rm (i)}] if $\theta(t)$ is the constant function $\theta = a- \arccos (-2/C)$, then
the $\mathrm{G}_2$-structure
$$
\varphi = t^2\, \omega \wedge dt + t^3 \Big(\cos \theta\,\psi_+ - \sin \theta\,\psi_-\Big)
$$
on the manifold $M=(0,\infty) \times L$ is coclosed and its induced metric is Ricci flat;
\item[{\rm (ii)}] if $\theta(t) = a- \arccos (-2 \cos t /C)$, then
the $\mathrm{G}_2$-structure
$$
\varphi = \sin^2t\, \omega \wedge dt + \sin^3t \Big(\cos \theta(t)\,\psi_+ - \sin \theta(t)\,\psi_-\Big)
$$
on the manifold $M=(0,\pi) \times L$ is coclosed
and its induced metric is Einstein with $Scal(g_{\varphi})=42$;
\item[{\rm (iii)}] if $C>2$ and $\theta(t) = a- \arccos (-2 \cosh t /C)$, then
the $\mathrm{G}_2$-structure
$$
\varphi = \sinh^2t\, \omega \wedge dt + \sinh^3t \Big(\cos \theta(t)\,\psi_+ - \sin \theta(t)\,\psi_-\Big)
$$
on the manifold $M=\left( 0,\ln \frac{C+\sqrt{C^2-4}}{2} \right) \times L$ is coclosed,
and its induced metric is Einstein with $Scal(g_{\varphi})=-42$.
\end{enumerate}
\end{theorem}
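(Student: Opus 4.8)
The plan is to first pin down the geometry of the fiber, then translate coclosedness of the warped $\mathrm{G}_2$-structure into a single ordinary differential equation by means of Corollary~\ref{corolary1}, and finally to solve that equation for the three warping functions allowed by Table~\ref{tableBesse}. \textbf{Step 1: the fiber.} Since $(L,\omega,\psi_+,\psi_-)$ lies in $\mathcal{W}_1^+\oplus\mathcal{W}_1^-\oplus\mathcal{W}_3$, the only torsion forms in \eqref{SU3torsion} that can be non-zero are $\pi_0$, $\sigma_0$ and $\nu_3$, so that $d\psi_+=\pi_0\,\omega^2$, $d\psi_-=\sigma_0\,\omega^2$ and $d\omega=-\tfrac32\sigma_0\,\psi_++\tfrac32\pi_0\,\psi_-+\nu_3$. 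Differentiating the first two identities and using $\omega\wedge\psi_\pm=0$ together with $\omega\wedge\nu_3=0$ (valid because $\nu_3\in\Omega^3_{12}(L)$) gives $d\pi_0\wedge\omega^2=d\sigma_0\wedge\omega^2=0$, hence $\pi_0$ and $\sigma_0$ are constant. Feeding $\pi_1=\nu_1=\pi_2=\sigma_2=0$ into the scalar curvature formula \eqref{scal6} yields $30=\tfrac{15}{2}(\pi_0^2+\sigma_0^2)-\tfrac12|\nu_3|^2$, so $C^2=\pi_0^2+\sigma_0^2\geq 4$, i.e. $C\geq 2$, with equality precisely when $\nu_3=0$.

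\textbf{Step 2: reduction to one equation.} Take the warped $\mathrm{G}_2$-structure \eqref{3-form} on $M=I_f\times L$ with $\alpha(t)=\cos\theta(t)$ and $\beta(t)=\sin\theta(t)$, so that the constraint $\alpha^2+\beta^2\equiv1$ holds automatically. Coclosedness means exactly $\tau_1=\tau_2=0$. Since $\pi_1=\nu_1=0$, conditions $iii)$ and $iv)$ of Corollary~\ref{corolary1} hold trivially, and since $\pi_2=\sigma_2=0$ condition $v)$ also holds trivially; hence coclosedness is equivalent to the single condition $ii)$, namely $\sigma_0\cos\theta(t)+\pi_0\sin\theta(t)+2f'(t)=0$. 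Writing $\sigma_0=C\cos a$ and $\pi_0=C\sin a$ with $a=\arccos(\sigma_0/C)$, this reads $C\cos\!\big(\theta(t)-a\big)=-2f'(t)$, which is solvable, with $\theta(t)=a-\arccos\!\big(-2f'(t)/C\big)$, exactly on the set where $|f'(t)|\leq C/2$.

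\textbf{Step 3: the three warping functions.} The fiber metric is Einstein with $\mu=5$ (because $Scal(g_{\omega,\psi_+})=30$ and $\dim L=6$), so by Table~\ref{tableBesse} (equivalently, by \eqref{q=1} with $d=6$) the non-trivial warped products $I_f\times_f L$ that are Einstein are exactly those with $f(t)=t$ (Einstein constant $\lambda=0$), $f(t)=\sin t$ ($\lambda=6$) and $f(t)=\sinh t$ ($\lambda=-6$), with $Scal(g_\varphi)=7\lambda\in\{0,42,-42\}$ respectively. For $f(t)=t$ one has $f'\equiv1$ and $|f'|\leq C/2$ holds since $C\geq2$, giving the constant angle $\theta=a-\arccos(-2/C)$ of case $(i)$. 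For $f(t)=\sin t$ one has $f'=\cos t$ and $|f'|\leq1\leq C/2$ always, giving $\theta(t)=a-\arccos(-2\cos t/C)$ on $(0,\pi)$, which is case $(ii)$. For $f(t)=\sinh t$ one has $f'=\cosh t$, and $|f'|\leq C/2$ forces $\cosh t\leq C/2$; this admits $t>0$ only when $C>2$, namely on $\big(0,\ln\tfrac{C+\sqrt{C^2-4}}{2}\big)$, where $\theta(t)=a-\arccos(-2\cosh t/C)$, which is case $(iii)$.

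\textbf{Main difficulty.} No step is genuinely hard; the content lies in organizing the reduction. The one delicate observation is in Step~2: because the fiber sits in such a small $\mathrm{SU}(3)$ torsion class, the whole of $\tau_2$ and the non-$dt$ part of $\tau_1$ vanish identically for \emph{any} choice of $f$ and $\theta$, so that coclosedness collapses to the single trigonometric relation $C\cos(\theta-a)=-2f'$. Once this is seen, the domain restrictions on $t$ and, in particular, the necessity of $C>2$ in case $(iii)$ are nothing but the requirement that $-2f'/C$ be an admissible value of the cosine.
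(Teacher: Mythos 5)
Your proposal is correct and follows essentially the same route as the paper's proof: establish that the only possibly non-zero fiber torsion forms are $\pi_0,\sigma_0,\nu_3$ with $\pi_0,\sigma_0$ constant and $C\geq 2$ via \eqref{scal6}, observe that $\tau_2$ vanishes identically so coclosedness reduces to condition $ii)$ of Corollary~\ref{corolary1}, rewrite it as $C\cos(\theta(t)-a)=-2f'(t)$, and check solvability for $f(t)=t,\sin t,\sinh t$. The only (harmless) additions are your explicit justification that $d\pi_0\wedge\omega^2=d\sigma_0\wedge\omega^2=0$ forces constancy and your remark that $C=2$ iff $\nu_3=0$.
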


\begin{proof}
Since the $\mathrm{SU}(3)$-structure is of type $\mathcal{W}_1^+ \oplus \mathcal{W}_1^- \oplus \mathcal{W}_3$,
we have that the possibly non-zero torsion reduces to $\pi_0,\sigma_0$ and $\nu_3$, that is,
the equations \eqref{SU3torsion} reduce to
$$
d\omega  = -\frac{3}{2}\sigma_0\, \psi_++\frac{3}{2}\pi_0\, \psi_- + \nu_3, \quad\quad
d\psi_+  = \pi_0\, \omega^2, \quad\quad
d\psi_-  = \sigma_0\, \omega^2.
$$
These equations imply $d\pi_0\wedge \omega^2=0$ and $d\sigma_0\wedge \omega^2=0$, therefore the torsion functions
$\pi_0,\sigma_0$ are constant.

On the other hand,
from the expression \eqref{scal6} for the scalar curvature we get
$$
30 = Scal(g_{\omega, \psi_+}) = \frac{15}{2} (\pi_0^2+  \sigma_0^2) - \frac{1}{2} |\nu_3|^2
\leq \frac{15}{2} (\pi_0^2+  \sigma_0^2),
$$
which implies $C^2=\pi_0^2+\sigma_0^2\geq 4$.

Clearly, the $\mathrm{G}_2$-structure given by \eqref{3-form} has torsion form $\tau_2 = 0$.
Thus, it is coclosed if and only if $\tau_1 = 0$ or, equivalently by Corollary~\ref{corolary1},
if and only if the equation
$$
\sigma_0\, \alpha(t) + \pi_0\, \beta(t) = -2f'(t)
$$
is satisfied. The scalar curvature of $g_{\omega, \psi_+}$ is positive,
so  $f(t)$ must be $t$, $\sin t$ or $\sinh t$.

Let $a=\arccos (\sigma_0/C)$, i.e. $\sigma_0=C\cos a$ and $\pi_0=C\sin a$.
Writing $\alpha(t)=\cos \theta(t)$ and $\beta(t)=\sin \theta(t)$, the equation
above becomes $\sigma_0\, \alpha(t) + \pi_0\, \beta(t)= C\cos (a-\theta(t))=-2f'(t)$, that is,
$$
\theta(t) = a- \arccos (-2 f'(t) /C).
$$
For $f(t)=t$ or $\sin t$, we have that $|-2 f'(t) /C|\leq 1$ for any $t$, because $C\geq 2$.
However, for $f(t)=\sinh t$, since $\cosh t\geq 1$ we need to impose that $C> 2$ in order to get
an open interval of values of $t$ satisfying $|-2 \cosh t /C| < 1$. Indeed, such interval is
$(\ln \frac{C-\sqrt{C^2-4}}{2}, \ln \frac{C+\sqrt{C^2-4}}{2})$ when $C> 2$.
From this discussion, the cases (i), (ii) and (iii) follow directly.
\end{proof}

\begin{example}\label{example1}
{\rm
We will apply Theorem~\ref{coclosed-einstein} to an Einstein $\mathrm{SU}(3)$-structure on $S^3\times S^3$
in the class $\mathcal{W}_1^- \oplus \mathcal{W}_3$
found in~\cite{Sc}. Here we will follow the description given in \cite[Section 3.4]{Ma}.

Let us consider the sphere $S^3$, viewed as the Lie group $\mathrm{SU}(2)$, with the basis of left invariant 1-forms $\{e^1, e^2, e^3\}$ satisfying
\begin{equation*}
de^1=e^{23}, \quad de^2=-e^{13}, \quad \text{and} \quad de^3=e^{12}.
\end{equation*}
Hence, the Lie algebra of $S^3\times S^3$
is $\mathfrak{g}=\mathfrak{su}(2)\oplus\mathfrak{su}(2)$, and its structure equations are
\begin{equation*}
\mathfrak{g}=(e^{23}, -e^{13}, e^{12}, f^{23}, -f^{13}, f^{12}),
\end{equation*}
where $\{f^i\}$ denotes the basis of 1-forms on the second sphere.
Now, we consider the basis
$\{h^1, \dots, h^6\}$ of the dual space $\mathfrak{g}^*$ of $\mathfrak{g}$ given by
\begin{equation*}
h^1=\frac{\sqrt{5}}{10}(e^1+f^1), \, h^2=\frac{\sqrt{5}}{10}(-e^1+f^1), \,  h^3=\frac{\sqrt{10}}{10}\,e^2, \, h^4=\frac{\sqrt{10}}{10}\,f^2, \,  h^5=\frac{\sqrt{10}}{10}\,e^3, \,  h^6=\frac{\sqrt{10}}{10}\,f^3.
\end{equation*}
With respect to this basis, the structure equations of the Lie algebra $\mathfrak{g}$ of $S^3\times S^3$ turn into
\begin{equation*}
\mathfrak{g}= \left( \sqrt{5}(h^{35}+h^{46}), \sqrt{5}(-h^{35}+h^{46}),\sqrt{5}(-h^{15}+h^{25}),
\sqrt{5}(-h^{16}-h^{26}), \sqrt{5}(h^{13}-h^{23}), \sqrt{5}(h^{14}+h^{24}) \right).
\end{equation*}
We define the $\mathrm{SU}(3)$-structure $(\omega, \psi_+, \psi_-)$ on $S^3\times S^3$ by
$$
\omega = h^{12}+h^{34}+h^{56},\quad\quad
\psi_+ = h^{135}-h^{146}-h^{236}-h^{245},\quad\quad
\psi_- = h^{136}+h^{145}+h^{235}-h^{246}.
$$
Then, an easy calculation shows that the equations~\eqref{SU3torsion} are
\begin{equation*}\label{canonical-su(3)-double}
\begin{array}{lcl}
d\omega &\!\!\!=\!\!\!& -\frac32 \sigma_0\, \psi_+ +\nu_3,\\[4pt]
d\psi_+ &\!\!\!=\!\!\!& 0,\\[4pt]
d\psi_- &\!\!\!=\!\!\!& \sigma_0\, \omega\wedge \omega,
\end{array}
\end{equation*}
where $\sigma_0=-\sqrt{5}$ and the torsion form $\nu_3$ is given by
\begin{equation*} \label{expression:nu3}
\nu_3=-\frac{\sqrt{5}}{2}\,h^{135} + \frac{\sqrt{5}}{2}\,h^{146} - \frac{\sqrt{5}}{2}\,h^{236} - \frac{\sqrt{5}}{2}\,h^{245}
+ \sqrt{5}\,h^{235} + \sqrt{5}\,h^{246}.
\end{equation*}
Therefore, the $\mathrm{SU}(3)$-structure $(\omega, \psi_+, \psi_-)$ on $S^3\times S^3$
belongs to the class $\mathcal{W}_1^- \oplus \mathcal{W}_3$. Moreover,
the induced metric $g_{\omega, \psi_{+}}$ on $S^3\times S^3$ is given by
$g_{\omega, \psi_{+}}= \sum_{i=1}^6 h^i \otimes h^i$,
and its Ricci curvature tensor satisfies
\begin{equation*}\label{ricci:s3s3}
Ric(g_{\omega, \psi_{+}}) = 5\,g_{\omega, \psi_{+}}.  
\end{equation*}
Thus, $g_{\omega, \psi_{+}}$ is an Einstein metric on $S^3\times S^3$ with $Scal(g_{\omega,\psi_+})=30$.

We can apply Theorem~\ref{coclosed-einstein} to get Einstein coclosed $\mathrm{G}_2$ manifolds
with different scalar curvatures. Notice that $C=\sqrt{5}$ and $a=\pi$.
Thus, in case (i) we get $\alpha=\frac{2\sqrt{5}}{5}$ and $\beta=-\frac{\sqrt{5}}{5}$, that is,
the manifold $M= (0,\infty) \times S^3 \times S^3$ with the $\mathrm{G}_2$-structure
$$
\varphi= t^2 \omega \wedge dt + \frac{\sqrt{5}}{5} t^3  \Big(2\,\psi_+ - \psi_-\Big)
$$
is a Ricci flat coclosed $\mathrm{G}_2$ manifold.

In case (ii), we have that a slight modification of the sine-cone provides an Einstein coclosed $\mathrm{G}_2$ manifold.
More concretely, the $\mathrm{G}_2$-structure
$$
\varphi = \sin^2t\, \omega \wedge dt + \frac{\sqrt{5}}{5} \sin^3t  \Big( 2\cos t\, \psi_+ - \sqrt{5-4\cos^2 t}\, \psi_-\Big)
$$
on the manifold $M=(0,\pi) \times S^3 \times S^3$ is coclosed
and its induced metric is Einstein with positive scalar curvature.

Finally, since $C=\sqrt{5} > 2$ we can apply (iii) with
$\theta(t) = \pi- \arccos (-2\cosh t /\sqrt{5})$, to get that the $\mathrm{G}_2$-structure
$$
\varphi = \sinh^2t\, \omega \wedge dt + \frac{\sqrt{5}}{5} \sinh^3t \Big(2\cosh t\, \psi_+ - \sqrt{5-4\cosh^2 t}\, \psi_-\Big)
$$
on the manifold $M=\left( 0,\ln \frac{1+\sqrt{5}}{2} \right) \times S^3 \times S^3$ is coclosed
and its induced metric is Einstein with negative scalar curvature.
}
\end{example}

\end{subsection}

\begin{subsection}{Warped products of Einstein coupled manifolds}\label{sec-coupled}

In this section we consider warped products of $6$-manifolds endowed with a coupled $\mathrm{SU}(3)$-structure.
Coupled $\mathrm{SU}(3)$-structures were first introduced in~\cite{Sal-Milan} (see also~\cite{FR1} for their role in physics),
and they are characterized by the condition
\begin{equation}\label{coupled-eq}
d \omega = c\, \psi_+,
\end{equation}
where $c \in \mathbb{R}-\{0\}$ is a nonzero constant.
Equivalently, coupled $\mathrm{SU}(3)$-structures have torsion class $\mathcal{W}_1^-\oplus \mathcal{W}_2^-$, i.e. they are $\mathrm{SU}(3)$-structures for which all the torsion forms different from $\sigma_0$ and $\sigma_2$ vanish. Notice that the torsion function $\sigma_0$ is a constant such that $\sigma_0=-\frac{2\, c}{3}$.
Coupled $\mathrm{SU}(3)$-structures are half-flat and they generalize the nearly K\"ahler structures ($\sigma_2=0$).
The next result follows from Theorem~\ref{torsiones}.

\begin{proposition}\label{coupled}
Let $(M=I_f \times L, \varphi)$ a warped $\mathrm{G}_2$ manifold of a coupled $\mathrm{SU}(3)$ manifold
$(L, \omega, \psi_+,\psi_-)$. The torsion forms are
\begin{equation*}
\begin{aligned}
\tau_0   = &  \, - \frac{4}{7f} \big(3\, \beta\, \sigma_0 - f \alpha \beta' + f \beta \alpha' \big),\\
\tau_1  = &  \, \frac{1}{2f}(\alpha\, \sigma_0 +2f') dt, \\
\tau_2  = &  \,  - f \alpha\, \sigma_2, \\
\tau_3 = & \, \frac{3}{14} f^2 \left(\alpha\beta\, \sigma_0 + 2 f \beta' \right)\psi_+
- \frac{3}{14} f^2\left(\beta^2\sigma_0 - 2f \alpha'\right)\psi_-  \\
& \, - \frac{2}{7} f\left(\beta\,\sigma_0 + 2f\alpha \beta' - 2f\beta\alpha' \right)\omega \wedge dt
-f\beta\, \sigma_2 \wedge dt, \\
\end{aligned}
\end{equation*}
where $\sigma_0=-\frac{2}{3}\, c$.
\end{proposition}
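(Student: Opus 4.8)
The plan is to obtain Proposition~\ref{coupled} as a direct specialization of Theorem~\ref{torsiones} to the coupled case. First I would record that, since a coupled $\mathrm{SU}(3)$-structure has torsion class $\mathcal{W}_1^-\oplus\mathcal{W}_2^-$ (equivalently, comparing $d\omega=c\,\psi_+$ with \eqref{SU3torsion}), all of its torsion forms vanish except $\sigma_0$ and $\sigma_2$: thus $\pi_0=0$, $\pi_1=0$, $\nu_1=0$, $\pi_2=0$, $\nu_3=0$, while $\sigma_0=-\frac{2}{3}c$ is a (nonzero) constant and $\sigma_2\in\Omega^2_8(L)$. Then I would substitute these values into the four formulas of Theorem~\ref{torsiones}.

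For $\tau_0$ and $\tau_1$ the substitution is immediate: putting $\pi_0=0$ in the formula for $\tau_0$ gives $\tau_0=\frac{4}{7f}\big(-3\sigma_0\beta+f\alpha\beta'-f\beta\alpha'\big)=-\frac{4}{7f}\big(3\beta\,\sigma_0-f\alpha\beta'+f\beta\alpha'\big)$, and putting $\pi_0=\pi_1=\nu_1=0$ in the formula for $\tau_1$ leaves only $\tau_1=\frac{1}{2f}(\alpha\,\sigma_0+2f')\,dt$. For $\tau_2$, every summand of Theorem~\ref{torsiones} involving $\pi_1$, $\nu_1$ or $\pi_2$ vanishes, so that $\tau_2=-f\alpha\,\sigma_2$. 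For $\tau_3$, the term $-f^2\ast_6\nu_3$ and all the terms involving $\ast_6(\pi_1\wedge\cdot)$, $\ast_6(\nu_1\wedge\cdot)$ or $\pi_2$ vanish; setting $\pi_0=0$ in the first three summands and rewriting $-\frac{3}{14}f^2(-\sigma_0\alpha\beta-2f\beta')=\frac{3}{14}f^2(\alpha\beta\,\sigma_0+2f\beta')$, and similarly for the $\psi_-$ and $\omega\wedge dt$ coefficients, produces exactly the claimed expression together with the one surviving $\sigma_2$-term $-f\beta\,\sigma_2\wedge dt$.

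There is essentially no obstacle here: the proposition is a direct corollary of Theorem~\ref{torsiones}, and the only point needing attention is the bookkeeping of signs, together with the reminder (already noted before the statement) that $\sigma_0$ is a genuine constant so that the relation $\sigma_0=-\frac{2}{3}c$ makes sense. As a consistency check one observes that the resulting $\tau_2$ lies in $\Omega^2_{14}(M)$ and $\tau_3$ in $\Omega^3_{27}(M)$, as it should.
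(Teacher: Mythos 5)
Your proposal is correct and is exactly the paper's argument: the paper simply states that Proposition~\ref{coupled} "follows from Theorem~\ref{torsiones}," i.e.\ by substituting $\pi_0=\pi_1=\nu_1=\pi_2=\nu_3=0$ into the general torsion formulas, which is precisely the computation you carry out. Your sign bookkeeping for $\tau_0$ and $\tau_3$ and the identification $\sigma_0=-\tfrac{2}{3}c$ from comparing $d\omega=c\,\psi_+$ with \eqref{SU3torsion} are all accurate.
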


Next we will consider coupled $\mathrm{SU}(3)$-structures with $\sigma_2\not=0$ (i.e. which are not nearly-K\"ahler,
since the latter case has been studied in Section~\ref{sec-main-classes})
which are Einstein with positive scalar curvature. In the following result
we restrict our attention to those warped $\mathrm{G}_2$-structures for which $\alpha$ and $\beta$ are constant.

\begin{theorem}\label{coupled-alpha-beta-constantes}
Let $(L, \omega, \psi_+,\psi_-)$ be a (non nearly-K\"ahler) Einstein coupled $\mathrm{SU}(3)$ manifold
with  $Scal(g_{\omega, \psi_+})=30$. Then, the coupled constant $c$ satisfies $|c|>3$, and
we have:
\begin{enumerate}
\item[{\rm (i)}]
If $(\alpha,\beta)=(1,0)$, then the $\mathrm{G}_2$-structure
$$
\varphi = f^2 \omega \wedge dt + f^3 \psi_+
$$
on the manifold $M=I_f \times L$ is locally conformal closed (i.e. of type $\mathcal{X}_2 \oplus \mathcal{X}_4$)
and its induced metric is Ricci flat for $f(t)=t$,
Einstein with $Scal(g_{\varphi})=42$ for $f(t)= \sin \, t$,
and Einstein with $Scal(g_{\varphi})=-42$ for $f(t)= \sinh \, t$.
\item[{\rm (ii)}]
If $(\alpha,\beta)=(0,1)$, then the $\mathrm{G}_2$-structure
$$
\varphi = f^2 \omega \wedge dt - f^3 \psi_-
$$
on the manifold $M=I_f \times L$ is integrable (i.e. of type $\mathcal{X}_1 \oplus \mathcal{X}_3 \oplus \mathcal{X}_4$)
and its induced metric is Ricci flat for $f(t)=t$,
Einstein with $Scal(g_{\varphi})=42$ for $f(t)= \sin \, t$,
and Einstein with  $Scal(g_{\varphi})=-42$ for $f(t)= \sinh \, t$.
\item[{\rm (iii)}]
If $(\alpha,\beta)=(\frac{3}{c}, \frac{\sqrt{c^2-9}}{c})$, then the $\mathrm{G}_2$-structure
$$
\varphi = t^2 \omega \wedge dt + \frac{t^3}{c} \Big(3\,\psi_+ - \sqrt{c^2-9}\,\psi_-\Big)
$$
on the manifold $M=(0,\infty) \times L$ is of type $\mathcal{X}_1 \oplus \mathcal{X}_2 \oplus \mathcal{X}_3$ with Ricci flat induced metric.
\end{enumerate}
\end{theorem}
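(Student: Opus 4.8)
The plan is to read off the three cases from the torsion formulas of Proposition~\ref{coupled} (the specialization of Theorem~\ref{torsiones} to coupled fibres) together with the warped-product Einstein equation \eqref{q=1}, after first recording the constraint $|c|>3$. For the latter, note that since the $\mathrm{SU}(3)$-structure on $L$ is coupled, all of its torsion forms vanish except $\sigma_0$ and $\sigma_2$, so the scalar curvature identity \eqref{scal6} degenerates to $30=Scal(g_{\omega,\psi_+})=\frac{15}{2}\sigma_0^2-\frac12|\sigma_2|^2$. As $(L,\omega,\psi_+,\psi_-)$ is assumed not nearly-K\"ahler we have $\sigma_2\neq0$, hence $\frac{15}{2}\sigma_0^2>30$, i.e. $\sigma_0^2>4$; recalling $\sigma_0=-\frac{2c}{3}$ this is exactly $c^2>9$. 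In particular $\sqrt{c^2-9}$ is real and $(\alpha,\beta)=(\frac3c,\frac{\sqrt{c^2-9}}{c})$ lies on the circle $\alpha^2+\beta^2=1$, so the three $\mathrm{G}_2$-structures in the statement are well defined.

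For the torsion types I would substitute $\alpha'=\beta'=0$ and the prescribed constant values into the four torsion forms of Proposition~\ref{coupled}. For $(\alpha,\beta)=(1,0)$ these reduce to $\tau_0=\tau_3=0$, $\tau_1=\frac{1}{2f}(\sigma_0+2f')\,dt$ and $\tau_2=-f\,\sigma_2$, so the warped structure lies in $\mathcal{X}_2\oplus\mathcal{X}_4$, strictly so because $\sigma_2\neq0$ gives $\tau_2\not\equiv0$ and $|\sigma_0|>2$ together with $f$ non-constant gives $\tau_1\not\equiv0$ for each $f\in\{t,\sin t,\sinh t\}$. For $(\alpha,\beta)=(0,1)$ one obtains $\tau_2=0$ with $\tau_0=-\frac{12\sigma_0}{7f}$, $\tau_1=\frac{f'}{f}\,dt$ and $\tau_3\neq0$, which is exactly the integrable class $\mathcal{X}_1\oplus\mathcal{X}_3\oplus\mathcal{X}_4$. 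For $(\alpha,\beta)=(\frac3c,\frac{\sqrt{c^2-9}}{c})$ the crucial identity is $\alpha\sigma_0=-2$, whence $\tau_1=\frac{1}{2f}(\alpha\sigma_0+2f')\,dt$ vanishes precisely when $f'\equiv1$, i.e. for $f(t)=t$; then $\tau_0=\frac{8\sqrt{c^2-9}}{7t}\neq0$, $\tau_2=-\frac{3t}{c}\,\sigma_2\neq0$ and $\tau_3\neq0$, giving the class $\mathcal{X}_1\oplus\mathcal{X}_2\oplus\mathcal{X}_3$. This step is bookkeeping; the only care needed is to verify the non-vanishing of the surviving torsion forms, using $\sigma_0\neq0$, $\sigma_2\neq0$ and $c^2>9$, so that the type is exactly as stated and not smaller.

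For the curvature of the induced metric I would invoke the warped-product machinery recalled before Table~\ref{tableBesse}. By Proposition~\ref{prop-3-form} the metric is $g_\varphi=f^2 g_{\omega,\psi_+}+dt^2$ regardless of $\alpha,\beta$, a warped product over a $1$-dimensional base with $6$-dimensional fibre $L$ which, since $Scal(g_{\omega,\psi_+})=30$, is Einstein with normalized constant $\mu=5$. Hence by \eqref{q=1}, $g_\varphi$ is Einstein with constant $\lambda$ if and only if $(f')^2+\frac{\lambda}{6}f^2=\frac{\mu}{5}=1$; substituting $f(t)=t$ gives $\lambda=0$ (Ricci flat), $f(t)=\sin t$ gives $\lambda=6$ hence $Scal(g_\varphi)=42$, and $f(t)=\sinh t$ gives $\lambda=-6$ hence $Scal(g_\varphi)=-42$, and these are exactly the Table~\ref{tableBesse} profiles compatible with $\mu>0$; in case (iii), where $f(t)=t$ is forced by $\tau_1=0$, only the Ricci-flat possibility remains. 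The main obstacle in the whole argument is purely computational, namely carrying the substitutions into Proposition~\ref{coupled} accurately and checking non-degeneracy of the remaining torsion forms; there is no conceptual difficulty beyond the torsion formulas and \eqref{q=1} already established.
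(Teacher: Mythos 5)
Your proposal is correct and follows essentially the same route as the paper: the bound $|c|>3$ from \eqref{scal6} with $\sigma_2\neq 0$, the type identification by substituting the constant $(\alpha,\beta)$ into the torsion formulas of Proposition~\ref{coupled} (your expressions agree with the paper's after using $\sigma_0=-\tfrac{2}{3}c$), and the Einstein constants from \eqref{q=1}/Table~\ref{tableBesse}. No gaps.
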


\begin{proof}
Since $\sigma_0,\sigma_2$ do not vanish, from the expression \eqref{scal6}
for the scalar curvature we get
$$
30 = Scal(g_{\omega, \psi_+}) = \frac{15}{2} \sigma_0^2 - \frac{1}{2}|\sigma_2|^2 = \frac{15}{2} \left( -\frac{2}{3}\, c \right)^2 - \frac{1}{2}|\sigma_2|^2
< \frac{10}{3} \, c^2.
$$
Therefore, the coupled constant $c$ in~\eqref{coupled-eq} satisfies $c^2>9$.

Let $\alpha$ and $\beta$ be constant functions satisfying $\alpha^2+\beta^2=1$. Then, by Proposition~\ref{coupled}
the torsion forms of the warped $\mathrm{G}_2$-structure reduce to
\begin{equation*}
\begin{aligned}
\tau_0 = &  \,  \frac{8}{7f} \beta\, c ,\quad\ \
\tau_1 = \, \frac{1}{3f}(3f' - \alpha\, c) dt, \quad\ \
\tau_2 = \,  - f \alpha\, \sigma_2, \\[4pt]
\tau_3 = & \, - \frac{1}{7} f^2 \alpha\beta\, c \,\psi_+
+ \frac{1}{7} f^2 \beta^2 c \,\psi_-  + \frac{4}{21} f \beta\, c \,\omega \wedge dt
-f\beta\, \sigma_2 \wedge dt, \\
\end{aligned}
\end{equation*}
where $\beta= \pm \sqrt{1-\alpha^2}$ and $0 \leq |\alpha| \leq 1$.

In the case (i), since $\alpha=1$ and $\beta=0$ we get
$$
\tau_0 = 0,\quad \ \tau_1 = \frac{1}{3f}(3f' -  c) dt, \quad \ \tau_2 = - f \sigma_2, \quad \ \tau_3 = 0.
$$
Hence the torsion forms $\tau_0$ and $\tau_3$ vanish, i.e. the $\mathrm{G}_2$ manifold is locally conformal closed.
Applying Table~\ref{tableBesse} to the function $f(t)=t$ we have that the induced metric is Ricci flat,
and for the function $f(t)= \sin \, t$ (resp. $f(t)= \sinh \, t$)
the metric induced by the $\mathrm{G}_2$-structure is Einstein
with $Scal(g_{\varphi})=42$ (resp. $Scal(g_{\varphi})=-42$).

In the case (ii), since $\alpha=0$ and $\beta=1$ we have
$$
\tau_0   =  \frac{8}{7f}\, c ,\quad
\tau_1  =  \frac{f'}{f} dt, \quad
\tau_2  =  0, \quad
\tau_3 =  \frac{1}{7} f^2 c \,\psi_-  + \frac{4}{21} f \, c \,\omega \wedge dt - f\, \sigma_2 \wedge dt.
$$
Since $\tau_2=0$, the $\mathrm{G}_2$ manifold is of type $\mathcal{X}_1 \oplus \mathcal{X}_3 \oplus \mathcal{X}_4$.
From Table~\ref{tableBesse}, for the function $f(t)= \sin \, t$, resp. $f(t)= \sinh \, t$,
the metric induced by the $\mathrm{G}_2$-structure is Einstein
with $Scal(g_{\varphi})=42$, resp. $Scal(g_{\varphi})=-42$.
For $f(t)=t$ the resulting metric is Ricci flat.

In the case (iii), we take $\alpha=3/c$. Since $|c|>3$ one has that $|\alpha|<1$ and we can take $\beta$ such that $\beta^2=1-\alpha^2=\frac{c^2-9}{c^2}$.
The torsion forms are
\begin{equation*}
\begin{aligned}
\tau_0 = &  \, \frac{8}{7f} \sqrt{c^2-9},\quad\ \
\tau_1 = \, \frac{1}{f}(f' - 1) dt, \quad\ \
\tau_2 = \,  - \frac{3}{c} f \, \sigma_2, \\[4pt]
\tau_3 = & \, - \frac{3\sqrt{c^2-9}}{7c} f^2\,\psi_+
+ \frac{c^2-9}{7c} f^2\,\psi_-
+ \frac{4}{21} \sqrt{c^2-9} f\,\omega \wedge dt
- \frac{\sqrt{c^2-9}}{c} f\, \sigma_2 \wedge dt, \\
\end{aligned}
\end{equation*}
The only possibility for a torsion form to be zero is to consider the function $f(t)=t$ to get $\tau_1=0$
(the other torsion forms are clearly non-zero).
Therefore, we obtain a $\mathrm{G}_2$ manifold of type $\mathcal{X}_1 \oplus \mathcal{X}_2 \oplus \mathcal{X}_3$ with Ricci flat induced metric.
\end{proof}

In order to exemplify this construction we describe first an example of Einstein coupled $\mathrm{SU}(3)$-structure
arising from a twistor space.

\begin{example}\label{twistor}
{\rm
It is well-known that the set of positive, orthogonal almost complex structures on a four-dimensional oriented Riemannian manifold forms
a smooth manifold $\mathcal{Z}$. The 6-dimensional manifold $\mathcal{Z}$, which is known as the twistor space, admits a (non-integrable) almost complex structure~$J$~\cite{ES}.
If in addition the four-manifold is self-dual and Einstein, then $(\mathcal{Z},J)$ admits an Einstein coupled  $\mathrm{SU}(3)$-structure~\cite{Tomasiello}.

We follow the lines of~\cite{FR1} for the description of this coupled structure.
There is a local frame $\{e^1,\ldots,e^6\}$ for the 1-forms on $\mathcal{Z}$ such that the coupled $\mathrm{SU}(3)$-structure
$(\omega,\psi_+,\psi_-)$
expresses locally as
$$
\omega=\frac{8}{5}(e^{12}+e^{34}+e^{56}), \quad
\psi_+=\mathfrak{R}\mathfrak{e}\, \Psi, \quad
\psi_-=\mathfrak{I}\mathfrak{m}\, \Psi,
$$
where
$$
\Psi=\left(8/5\right)^{\frac{3}{2}} i\, (e^1+ie^2)\wedge(e^3+ie^4)\wedge(e^5+ie^6).
$$
The differential of the forms $\omega$ and $\psi_-$ are given by
$$
d\omega=-\frac{3}{2} \sigma_0\, \psi_+, \quad\quad
d\psi_-= \sigma_0\, \omega^2 - \sigma_2 \wedge \omega,
$$
with
$$
\sigma_0=\frac{\sqrt{10}}{6}(\sigma+2),
\quad\quad
\sigma_2=-\frac{8\sqrt{10}}{15}(\sigma-1)\, (e^{12}+e^{34}-2e^{56}),
$$
where $24\, \sigma$ is equal to the scalar curvature of the given four-manifold.
The metric induced by the $\mathrm{SU}(3)$-structure is Einstein precisely for the values
$\sigma=1$ (in this case the torsion form $\sigma_2$ vanishes and the structure is nearly-K\"ahler)
and $\sigma=2$.
For the latter coupled $\mathrm{SU}(3)$-structure the constant $c$ in~\eqref{coupled-eq} is $c=-\sqrt{10}$, and
$$
Ric(g_{\omega, \psi_+})= 5\, g_{\omega, \psi_+},
$$
so that we can apply Theorem~\ref{coupled-alpha-beta-constantes}.

In the cases (i) and (ii) we get $\mathrm{G}_2$-structures which are locally conformal closed
or integrable (i.e. of types $\mathcal{X}_2 \oplus \mathcal{X}_4$ or $\mathcal{X}_1 \oplus \mathcal{X}_3 \oplus \mathcal{X}_4$)
whose induced metrics are Ricci flat for $f(t)=t$, Einstein with $Scal(g_{\varphi})=42$ for $f(t)= \sin \, t$,
and Einstein with $Scal(g_{\varphi})=-42$ for $f(t)= \sinh \, t$.

In the case (iii) of Theorem~\ref{coupled-alpha-beta-constantes}, since $|c|=\sqrt{10}>3$,
we get that the $\mathrm{G}_2$-structure
$$
\varphi = t^2 \omega \wedge dt - \frac{t^3}{\sqrt{10}} (3\,\psi_+ - \psi_-),
$$
is of type $\mathcal{X}_1 \oplus \mathcal{X}_2 \oplus \mathcal{X}_3$ with Ricci flat induced metric.
}
\end{example}

\begin{remark}
{\rm
Bryant proved in \cite{Br} that
there are no closed $\mathrm{G}_2$-structures $\varphi$ with $Scal(g_{\varphi}) \geq 0$ unless they are parallel.
Indeed, by~\eqref{scal7} any such structure satisfies $Scal(g_{\varphi})=-\frac{1}{2} |\tau_2|^2$.
From Example~\ref{twistor} it follows that such a result cannot be extended to the locally conformal closed class,
since there are (non parallel) Einstein examples with positive scalar curvature, as well as Ricci flat examples.
Notice that the latter case is considered by Fino and Raffero in~\cite{FR1}.
}
\end{remark}

In the following result we extend the case (iii) in Theorem~\ref{coupled-alpha-beta-constantes} to
more general $\mathrm{G}_2$-structures for which the functions $\alpha$ and $\beta$ are not constant.
This produces new Einstein examples with positive, as well as negative, scalar curvature
when we apply the result to a twistor space over a self dual Einstein $4$-manifold.

\begin{theorem}\label{coupled-alpha-beta-no-constantes}
Let $(L, \omega, \psi_+, \psi_-)$ be a (non nearly-K\"ahler) Einstein coupled $\mathrm{SU}(3)$ manifold with $Scal(g_{\omega, \psi_+})=30$.
Then,
\begin{enumerate}
\item[(i)]
the $\mathrm{G}_2$-structure $\varphi$ on the manifold $M=(0,\pi) \times L$ given by
$$
\varphi = \sin^2t\, \omega \wedge dt + \frac{\sin^3t}{c} \Big(3\cos t\,\psi_+ - \sqrt{c^2-9\cos^2t\, }\,\psi_-\Big)
$$
is of type $\mathcal{X}_1 \oplus \mathcal{X}_2 \oplus \mathcal{X}_3$
and its induced metric is Einstein with $Scal(g_{\varphi})=42$;
\item[(ii)]
the $\mathrm{G}_2$-structure $\varphi$ on the manifold $M=\left( 0,\ln \frac{|c|+\sqrt{c^2-9}}{3} \right) \times L$ given by
$$
\varphi = \sinh^2t\, \omega \wedge dt + \frac{\sinh^3t}{c} \Big(3\cosh t\,\psi_+ - \sqrt{c^2-9\cosh^2t\, }\,\psi_-\Big)
$$
is of type $\mathcal{X}_1 \oplus \mathcal{X}_2 \oplus \mathcal{X}_3$
and its induced metric is Einstein with $Scal(g_{\varphi})=-42$.
\end{enumerate}
\end{theorem}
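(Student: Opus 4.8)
The plan is to decouple the two assertions. By Proposition~\ref{prop-3-form} the induced metric $g_{\varphi}=f^2 g_{\omega,\psi_+}+dt^2$ depends only on the warping function $f$ and not on $\alpha,\beta$, so I would first choose $f$ to make $g_{\varphi}$ Einstein with the prescribed scalar curvature, and only afterwards tune $\alpha,\beta$ to place $\varphi$ in the class $\mathcal{X}_1\oplus\mathcal{X}_2\oplus\mathcal{X}_3$.

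For the metric: since $(L,\omega,\psi_+,\psi_-)$ is Einstein with $Scal(g_{\omega,\psi_+})=30$ we have $Ric(g_{\omega,\psi_+})=5\,g_{\omega,\psi_+}$, i.e. $\mu=5$, and by Theorem~\ref{coupled-alpha-beta-constantes} the coupled constant satisfies $|c|>3$ with $\sigma_0=-\tfrac23 c$. The metric $g_{\varphi}$ is the warped product $I_f\times_f L$ over a one-dimensional base with $6$-dimensional Einstein fibre of constant $\mu=5$, so by \eqref{q=1} with $d=6$ it is Einstein with constant $\lambda$ precisely when $(f')^2+\tfrac{\lambda}{6}f^2=1$. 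Hence $f(t)=\sin t$ on $(0,\pi)$ gives $\lambda=6$, i.e. $Scal(g_{\varphi})=7\lambda=42$, and $f(t)=\sinh t$ gives $\lambda=-6$, i.e. $Scal(g_{\varphi})=-42$ (cf.\ Table~\ref{tableBesse}).

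For the $\mathrm{G}_2$ type: set $\alpha(t)=\dfrac{3f'(t)}{c}$ and take $\beta(t)$ with $\beta^2=1-\alpha^2=\dfrac{c^2-9f'(t)^2}{c^2}$. This is a positive smooth function on all of $(0,\pi)$ when $f=\sin t$ (since $|c|>3$ forces $9\cos^2 t<c^2$) and on exactly $\big(0,\ln\tfrac{|c|+\sqrt{c^2-9}}{3}\big)$ when $f=\sinh t$ (the condition $9\cosh^2 t<c^2$), which is what fixes the intervals in (i) and (ii); substituting $f,\alpha,\beta$ into \eqref{3-form} reproduces the two displayed $3$-forms. Plugging these into Proposition~\ref{coupled}, and using $\alpha\sigma_0=\tfrac{3f'}{c}\cdot(-\tfrac{2c}{3})=-2f'$, the form $\tau_1=\tfrac{1}{2f}(\alpha\sigma_0+2f')\,dt$ vanishes identically, so the $\mathcal{X}_4$-component is zero and $\varphi$ lies in $\mathcal{X}_1\oplus\mathcal{X}_2\oplus\mathcal{X}_3$. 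To see the type is exactly this one, I would note from the same proposition that $\beta$ is nowhere zero on the interval and $\alpha\not\equiv 0$, so $\tau_2=-f\alpha\,\sigma_2\not\equiv 0$ (using $\sigma_2\neq 0$), the summand $-f\beta\,\sigma_2\wedge dt$ keeps $\tau_3\not\equiv 0$, and a short inspection of $\tau_0=-\tfrac{4}{7f}(3\beta\sigma_0-f\alpha\beta'+f\beta\alpha')$ shows it is not identically zero either (for instance it is unbounded as $t\to 0^+$).

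The only point requiring genuine care, rather than routine substitution into Proposition~\ref{coupled}, is the bookkeeping around the interval $I_f$: one must verify that $\alpha^2+\beta^2=1$ actually defines an $\mathrm{SU}(3)$-rotation of $(\psi_+,\psi_-)$ throughout $I_f$, equivalently $|\alpha|<1$ strictly there, which is precisely where the bound $|c|>3$ and the endpoint $\ln\tfrac{|c|+\sqrt{c^2-9}}{3}$ come from, and that the Einstein computation is run consistently with $\mu=5$ and $d=6$. Otherwise this is the non-constant $\alpha,\beta$ refinement of case (iii) of Theorem~\ref{coupled-alpha-beta-constantes}, obtained by replacing the cone $f(t)=t$ with the Einstein warpings $\sin t$ and $\sinh t$, and no essentially new ingredient is needed.
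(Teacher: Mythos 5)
Your proposal is correct and follows essentially the same route as the paper: Proposition~\ref{coupled} gives $\tau_1=0$ iff $\alpha=\tfrac{3}{c}f'$, Table~\ref{tableBesse} (equivalently \eqref{q=1} with $d=6$, $\mu=5$) handles the Einstein condition for $f=\sin t$ and $f=\sinh t$, and the bound $|c|>3$ fixes the intervals on which $|\alpha|<1$. Your extra verification that $\tau_0,\tau_2,\tau_3$ do not vanish is a harmless addition the paper leaves implicit.
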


\begin{proof}
By Proposition~\ref{coupled} we get that $\tau_1=0$ if and only if $\alpha(t)=\frac{3}{c} f'(t)$.

First we consider $f(t)=\sin t$. Since $|c|>3$ by Theorem~\ref{coupled-alpha-beta-constantes},
the function $\alpha(t)=\frac{3}{c} \cos t$
satisfies $|\alpha(t)|<1$ for any $t \in \mathbb{R}$.

Let us consider now $f(t)=\sinh t$. Since $|c|>3$, the function $\alpha(t)=\frac{3}{c} \cosh t$
satisfies $|\alpha(t)|\leq 1$ only for the values of
$t \in \left[-\ln \frac{|c|+\sqrt{c^2-9}}{3},\ln \frac{|c|+\sqrt{c^2-9}}{3} \right]$.

Hence, in both cases, the result follows by taking $\beta(t)$ such that
$\beta^2(t)=1-\alpha^2(t)$.
\end{proof}

Let us consider the twistor space $\mathcal{Z}$ over a self-dual Einstein $4$-manifold with the Einstein coupled $\mathrm{SU}(3)$-structure
given in Example~\ref{twistor}. Hence, from Theorem~\ref{coupled-alpha-beta-constantes}~(iii) and
Theorem~\ref{coupled-alpha-beta-no-constantes},
we obtain $\mathrm{G}_2$
manifolds in the class $\mathcal{X}_1 \oplus \mathcal{X}_2 \oplus \mathcal{X}_3$ which are Ricci flat, or Einstein with
$Scal(g_{\varphi})=\pm 42$.

\smallskip

Einstein $\mathrm{G}_2$ manifolds in the class $\mathcal{X}_2 \oplus \mathcal{X}_3 \oplus \mathcal{X}_4$ are given in the following

\begin{theorem}\label{2+3+4}
Let $(L, \omega, \psi_+, \psi_-)$ be a (non nearly-K\"ahler) Einstein coupled $\mathrm{SU}(3)$ manifold with $Scal(g_{\omega,\psi_+})=30$.
Let $c$ denote the coupled constant, and
consider $\theta(t)$ as follows:
\begin{enumerate}
\item[(i)] if $\theta(t)= \arcsin \left(\frac{2t^{-2c}}{1+t^{-4c}} \right)$, then the $\mathrm{G}_2$-structure $\varphi$ on the manifold $M= (0, \infty) \times L$ given by
$$
\varphi= t^2 \omega \wedge dt + t^3 (\cos \theta(t)\, \psi_+ - \sin \theta(t)\, \psi_-)
$$
belongs to the class $\mathcal{X}_2 \oplus \mathcal{X}_3 \oplus \mathcal{X}_4$ and its induced metric is Ricci flat;
\item[(ii)] if $\theta(t)= \arcsin \left(\frac{2(\tan \frac{t}{2})^{-2c}}{1+(\tan \frac{t}{2})^{-4c}}\right)$,
then the $\mathrm{G}_2$-structure $\varphi$ on the manifold $M= (0, \pi) \times L$ given by
$$
\varphi= \sin^2 t \omega \wedge dt + \sin^3 t (\cos \theta(t)\, \psi_+ - \sin \theta(t)\, \psi_-)
$$
belongs to the class $\mathcal{X}_2 \oplus \mathcal{X}_3 \oplus \mathcal{X}_4$
and its induced metric is Einstein with $Scal(g_{\varphi})=42$;
\item[(iii)] if $\theta(t)= \arcsin \left(\frac{2(\tanh \frac{t}{2})^{-2c}}{1+(\tanh \frac{t}{2})^{-4c}}\right)$,
then the $\mathrm{G}_2$-structure $\varphi$ on the manifold $M= (0, \infty) \times L$ given by
$$
\varphi= \sinh^2 t \omega \wedge dt + \sinh^3 t (\cos \theta(t)\, \psi_+ - \sin \theta(t)\, \psi_-)
$$
belongs to the class $\mathcal{X}_2 \oplus \mathcal{X}_3 \oplus \mathcal{X}_4$
and its induced metric is Einstein with $Scal(g_{\varphi})=-42$.
\end{enumerate}
\end{theorem}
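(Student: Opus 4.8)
The plan is to read off the torsion forms of the warped $\mathrm{G}_2$-structure from Proposition~\ref{coupled} and then impose the single equation characterising membership in $\mathcal{X}_2\oplus\mathcal{X}_3\oplus\mathcal{X}_4$, namely $\tau_0=0$. Write $\alpha(t)=\cos\theta(t)$ and $\beta(t)=\sin\theta(t)$ for a real-valued function $\theta$ (possible since $\alpha^2+\beta^2=1$); then $\alpha\beta'-\beta\alpha'=\theta'$, so Proposition~\ref{coupled} gives $\tau_0=-\tfrac{4}{7f}\big(3\sigma_0\sin\theta-f\theta'\big)$. Since $\sigma_0=-\tfrac{2}{3}c$ for the coupled constant $c$, the condition $\tau_0=0$ becomes the separable first order ODE
\[ f(t)\,\theta'(t)=-2c\,\sin\theta(t). \]

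First I would integrate this ODE. Separating variables and using $\int\csc\theta\,d\theta=\ln|\tan(\theta/2)|$ together with $\int dt/t=\ln t$, $\int dt/\sin t=\ln\tan(t/2)$ and $\int dt/\sinh t=\ln\tanh(t/2)$, one obtains, after normalising the integration constant to $1$, the relations $\tan(\theta/2)=t^{-2c}$, $\tan(\theta/2)=(\tan\tfrac{t}{2})^{-2c}$ and $\tan(\theta/2)=(\tanh\tfrac{t}{2})^{-2c}$ for the three warping functions $f(t)=t$, $\sin t$, $\sinh t$; applying $\sin\theta=2\tan(\theta/2)/(1+\tan^2(\theta/2))$ turns these into exactly the functions $\theta(t)$ in the statement. (Equivalently, and perhaps what one does in practice, one substitutes the stated $\theta(t)$ directly into the ODE and checks equality, a short computation.) With this $\theta$ we get $\tau_0\equiv0$, so the structure belongs to $\mathcal{X}_2\oplus\mathcal{X}_3\oplus\mathcal{X}_4$; and since the fiber is not nearly-K\"ahler one has $\sigma_2\neq0$, whence from Proposition~\ref{coupled} the forms $\tau_1$, $\tau_2=-f\cos\theta\,\sigma_2$ and $\tau_3$ do not all vanish, so the type is not smaller.

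It then remains to identify the Einstein constants. By Proposition~\ref{prop-3-form} the induced metric is $g_\varphi=f^2 g_{\omega,\psi_+}+dt^2$, and by hypothesis $g_{\omega,\psi_+}$ is Einstein with $Scal(g_{\omega,\psi_+})=30$, i.e.\ with Einstein constant $\mu=5=d-1$ where $d=\dim L=6$. Hence \eqref{q=1}, equivalently the last three columns of Table~\ref{tableBesse}, applies with $f(t)=t$, $\sin t$, $\sinh t$ on the intervals $I_f=(0,\infty)$, $(0,\pi)$, $(0,\infty)$ respectively, producing Einstein constant $\lambda=0$, $6$, $-6$ on $M$; as $\dim M=7$ this gives $Scal(g_{\varphi})=0$, $42$, $-42$, as claimed.

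The only point calling for a little care — and where this construction is genuinely simpler than Theorem~\ref{coupled-alpha-beta-no-constantes} — is checking that the solution $\theta(t)$ is smooth on all of $I_f$, so that $\varphi$ is a bona fide $\mathrm{G}_2$-structure there. This is automatic: $\alpha=\cos\theta$ and $\beta=\sin\theta$ are bounded for any $\theta$, the right hand side of $f\theta'=-2c\sin\theta$ is smooth in $(\theta,t)$ throughout $I_f$, and the data select the branch with $\tan(\theta/2)>0$, i.e.\ $\theta\in(0,\pi)$ and $\sin\theta>0$ everywhere — so, unlike the case $\alpha=\tfrac{3}{c}f'$ of Theorem~\ref{coupled-alpha-beta-no-constantes}, no shrinking of the interval is forced. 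Aside from this, the proof is a direct verification based on Proposition~\ref{coupled}.
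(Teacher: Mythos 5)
Your proposal is correct and follows essentially the same route as the paper: both reduce the claim to the single condition $\tau_0=\frac{4}{7f}(2c\sin\theta+f\theta')=0$ via Proposition~\ref{coupled} and then read off the Einstein constants from Table~\ref{tableBesse}; the only difference is that you integrate the ODE $f\theta'=-2c\sin\theta$ to \emph{derive} the stated $\theta(t)$ (obtaining $\tan(\theta/2)=t^{-2c}$, etc.), whereas the paper merely asserts the verification. One tiny caveat, shared with the paper's own statement: the solution you produce is really $\theta=2\arctan\big(t^{-2c}\big)$, which coincides with the displayed $\arcsin$ expression only where $t^{-2c}\le 1$ (the two branches differ in the sign of $\cos\theta$ past that point), but this does not affect the substance of your argument.
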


\begin{proof}
Taking $\alpha(t)= \cos \theta(t)$ and $\beta(t)=\sin \theta(t)$ in
Proposition~\ref{coupled} we get that
$\tau_0 = \frac{4}{7f}(2 c \sin \theta  + f \theta')$.
A direct calculation shows that for (i), (ii) and (iii) with $f(t)=t$, $\sin t$ and $\sinh t$, respectively, the torsion form $\tau_0$ vanishes,
so the $\mathrm{G}_2$-structure belongs to the class $\mathcal{X}_2 \oplus \mathcal{X}_3 \oplus \mathcal{X}_4$ and the induced metric is Einstein.
Note that $\tau_1$, $\tau_2$ and $\tau_3$ never vanish.
\end{proof}

\end{subsection}

\begin{subsection}{Warped products of Einstein solvmanifolds}\label{sec-solvmanifolds}

Up to now, we have constructed Einstein warped $\mathrm{G}_2$ manifolds by means of the warping functions
$f(t)= e^t$, $\sinh t$, $t$ or $\sin t$. In view of Table~\ref{tableBesse},
it remains to obtain examples with warping function $f(t)=\cosh t$.
Note that in order to obtain such examples, the fiber manifold is required to be Einstein with negative scalar curvature.
For this reason, and since Einstein solvmanifolds have negative scalar curvature, in this section
we consider the warped products of 6-dimensional solvmanifolds.

An Einstein solvmanifold $(S, g)$ can be described in terms of its Einstein metric solvable Lie algebra, namely $(\mathfrak{s}, \langle \cdot, \cdot \rangle_{\mathfrak{s}})$, where $\mathfrak{s}$ is the Lie algebra of the solvable Lie group $S$, and
$\langle \cdot, \cdot \rangle_{\mathfrak{s}}$ is the scalar product on~$\mathfrak{s}$.
In \cite{La} Lauret obtained a structure theorem for Einstein metric solvable Lie algebras.

\begin{theorem}{\cite{La}}
Any Einstein metric solvable Lie algebra $(\mathfrak{s}, \langle \cdot, \cdot \rangle_{\mathfrak{s}})$
has to be of standard type.
\end{theorem}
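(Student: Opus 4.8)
This is Lauret's theorem, and the plan is to follow his geometric invariant theory approach, reducing the Einstein condition to a statement about distinguished orbits of $\GL(\frn)$ acting on a variety of nilpotent Lie brackets. First I would fix notation and make the usual reductions: one may assume $(\frs,\langle\cdot,\cdot\rangle)$ is not flat (flat solvable Lie algebras are abelian, hence already standard), set $\frn=[\frs,\frs]$ and $\fra=\frn^{\perp}$, so that $\frs=\fra\oplus\frn$ orthogonally, and check that $\frn$ is the nilradical, that $\fra$ is an a priori non-abelian subalgebra, and that ``standard'' means exactly $[\fra,\fra]=0$. Writing $\mathrm{ad}(a)|_{\frn}=D_a+S_a$ for $a\in\fra$, with $D_a$ symmetric and $S_a$ skew, I would record the Einstein equations $\mathrm{Ric}_{\langle\cdot,\cdot\rangle}=c\,\mathrm{Id}$ split along $\fra$ and $\frn$: they produce a nilsoliton-type identity $\mathrm{Ric}_{\langle\cdot,\cdot\rangle|_{\frn}}=c\,\mathrm{Id}+D$ for a symmetric derivation $D$ of $\frn$ attached to the mean curvature vector $H\in\fra$ ($\langle H,a\rangle=\trace\,\mathrm{ad}(a)$), together with commutativity of the family $\{D_a\}$ and a family of obstruction terms, quadratic in the $S_a$ and in the $[\fra,\fra]$-brackets, whose vanishing is equivalent to standardness.

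Next I would encode the bracket of $\frn$ as a point $\mu\in V:=\Lambda^{2}\frn^{*}\otimes\frn$ and exploit the $\GL(\frn)$-action on $V$, whose moment map $m$ sends $\mu$ to a normalization of the Ricci operator of the nilpotent metric Lie algebra $(\frn,\mu)$. Using the Kirwan--Ness stratification $V\setminus\{0\}=\bigsqcup_{\beta}S_{\beta}$ associated with the negative gradient flow of $\|m\|^{2}$ --- each stratum $S_{\beta}$ carrying a distinguished rational semisimple $\beta\in\mathrm{sym}(\frn)$ and satisfying the estimate $\langle m(\mu),\beta\rangle\ge\|\beta\|^{2}$ for $\mu\in S_{\beta}$ --- I would identify the nilsoliton equation ``$\mathrm{Ric}(\mu)=c\,\mathrm{Id}+(\text{derivation})$'' with $\mu$ being a minimal vector of its stratum, and $\beta$ with a positive multiple of the pre-Einstein derivation $\phi$ of $\frn$, the unique semisimple derivation normalized by $\trace(\phi\,\psi)=\trace\,\psi$ for every $\psi\in\mathrm{Der}(\frn)$. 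The step ``Einstein $\Rightarrow$ the nilradical is a nilsoliton with $D\sim\phi$'' then follows from the split Einstein equations together with the symmetry and positivity of $\phi$, placing $\mu\in S_{\phi}$ as a minimal vector.

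The crux, and the step I expect to be the main obstacle, is ruling out $[\fra,\fra]\ne 0$. Here I would argue by contradiction: given a hypothetical non-standard Einstein $\frs$, deform it through a one-parameter family of solvable metric Lie algebras, gauging inside $\GL(\frs)$ so as to suppress the $[\fra,\fra]$- and $S_a$-contributions (equivalently, flowing along $\beta$ in the nilpotent direction), and compute the first variation of the scalar curvature of the induced left-invariant metrics at fixed volume. Combining the stratum estimate $\langle m(\mu),\beta\rangle\ge\|\beta\|^{2}$, the identity $\trace D=\trace\,\mathrm{ad}(H)|_{\frn}=|H|^{2}>0$, and the fact that the obstruction terms enter the Einstein tensor with a definite sign, one obtains a strict inequality contradicting that the Einstein metric is a critical point of that functional; hence all $S_a$ vanish and $[\fra,\fra]=0$, so $\frs$ is standard. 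The delicate part throughout is the sharpness of this inequality: it relies on the full force of the Kirwan--Ness convexity of $m$ on $\overline{\GL\cdot\mu}$ and on Lauret's variational characterization of nilsolitons, whereas the remaining ingredients --- the orthogonal splitting $\frs=\fra\oplus\frn$, the precise form of the split Einstein equations, and the existence, uniqueness and symmetry of the pre-Einstein derivation --- are routine given the cited background.
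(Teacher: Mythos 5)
The paper does not prove this statement at all: it is quoted verbatim from Lauret's Annals paper \cite{La} and used as a black box (its only role here is to justify restricting to standard solvable extensions when classifying the $6$-dimensional Einstein solvmanifolds of Section~\ref{sec-solvmanifolds}). So there is no ``paper proof'' to compare against; your proposal has to stand on its own as a proof of Lauret's theorem, and as written it does not. What you give is an accurate road map of Lauret's published strategy --- the orthogonal splitting $\frs=\fra\oplus\frn$, the split Einstein equations producing the nilsoliton identity $\mathrm{Ric}|_{\frn}=c\,\mathrm{Id}+D$, the encoding of the nilpotent bracket as a point $\mu$ of $\Lambda^2\frn^*\otimes\frn$, the moment map and the Kirwan--Ness stratification with the estimate $\langle m(\mu),\beta\rangle\ge\|\beta\|^2$, and the pre-Einstein derivation --- but the decisive step, ruling out $[\fra,\fra]\neq 0$, is exactly the one you flag as ``the main obstacle'' and then dispatch in a single sentence by asserting that ``the obstruction terms enter the Einstein tensor with a definite sign'' and that combining the various ingredients ``one obtains a strict inequality.'' That assertion is the entire content of the theorem: the whole difficulty of Lauret's proof is establishing precisely such a sign-definite inequality (a delicate chain of trace estimates relating $\mathrm{Ric}$, the stratum datum $\beta$, the mean curvature vector $H$, and the components of the bracket along $\Lambda^2\fra^*\otimes\frn$ and the skew parts $S_a$), and it occupies most of his paper. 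A proposal that names the inequality but does not derive it, nor even states it precisely, has a genuine gap at the crux.

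Two smaller inaccuracies are worth flagging. First, flat metric solvable Lie algebras are not abelian (Milnor's classification gives non-abelian examples such as $\fre(2)$); they are standard, but for a different reason, so your opening reduction needs restating. Second, if $[\fra,\fra]\neq 0$ then $\fra=\frn^{\perp}$ is not a subalgebra at all, since $[\fra,\fra]\subseteq[\frs,\frs]=\frn$ and $\fra\cap\frn=0$; calling it ``an a priori non-abelian subalgebra'' conflates the hypothesis with the conclusion. Neither slip is fatal, but both should be cleaned up if you intend to write out the argument in full --- and for the purposes of this paper the honest course is simply to cite \cite{La}, as the authors do, rather than to reprove a fifty-page theorem in a remark.
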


Let $(\mathfrak{n}, \langle \cdot, \cdot \rangle)$ be a metric nilpotent Lie algebra. A metric solvable extension
of $(\mathfrak{n}, \langle \cdot, \cdot \rangle)$ is a metric solvable Lie algebra $(\mathfrak{s}, \langle \cdot, \cdot \rangle_{\mathfrak{s}})$
such that $\mathfrak{s}$ has the orthogonal decomposition $\mathfrak{s}=\mathfrak{n} \oplus \mathfrak{a}$ with $[\mathfrak{s},\mathfrak{s}]=\mathfrak{n}$, $[\mathfrak{a},\mathfrak{a}] \subset \mathfrak{n}$ and
$\langle \cdot, \cdot \rangle_{\mathfrak{s}}|_{\mathfrak{n} \times \mathfrak{n}}= \langle \cdot, \cdot \rangle$.
The metric solvable Lie algebra $(\mathfrak{s}, \langle \cdot, \cdot \rangle_{\mathfrak{s}})$ is said to be \emph{standard}
or to have \emph{standard type} if $\mathfrak{a}$ is an Abelian subalgebra of $\mathfrak{s}$.
In this case, $\dim \mathfrak{a}$ is called the \emph{rank}.

Taking into account the structure theorem, in \cite[Section 3.2]{Ma} a classification of Einstein metric 6-dimensional solvable Lie algebras
is obtained. There, metric nilpotent Lie algebras up to dimension five are considered,
and their corresponding Einstein metric solvable extensions are described.

By considering these $6$-dimensional Einstein metric solvable Lie algebras, in the following example we give
an Einstein $\mathrm{G}_2$ manifold obtained as a warped product with warping function $f(t) = \cosh t$.

\begin{example}\label{Sol-6-4}
{\rm
Let $(S,g)$ be the solvmanifold corresponding to the metric solvable Lie
algebra $(\mathfrak{s}, \langle \cdot, \cdot \rangle)$ with
\begin{equation*}
\mathfrak{s}= \left(\frac{\sqrt{10}}{4}e^{16},\, \frac{\sqrt{10}}{4}e^{26},\, \frac{\sqrt{10}}{4}e^{36},
\, \frac{\sqrt{10}}{4}e^{46},\, \frac{\sqrt{10}}{2}e^{12}+\frac{\sqrt{10}}{2}e^{34}+\frac{\sqrt{10}}{2}e^{56},\, 0\right),
\end{equation*}
and $\langle e^i, e^j \rangle = \delta_{ij}$. Consider the $\mathrm{SU}(3)$-structure $(\omega,\psi_+,\psi_-)$ on $S$ given by
\begin{equation*}
\begin{aligned}
\omega & = e^{12}+e^{34}+e^{56}, \\
\psi_+ & = e^{135}-e^{146}-e^{236}-e^{245}, \\
\psi_- & = e^{136}+e^{145}+e^{235}-e^{246}. \\
\end{aligned}
\end{equation*}
It is clear that the induced metric is precisely the given $g$, i.e. $g=g_{\omega,\psi_+}$, and it can be checked that
\begin{equation*}
Ric (g_{\omega, \psi_+}) = -5\, g_{\omega, \psi_+}.
\end{equation*}
A direct calculation shows that
$$
d\omega = 0,\quad\quad
d\psi_+ = \pi_1 \wedge \psi_+,\quad\quad
d\psi_- = \pi_1 \wedge \psi_-,
$$
where $\pi_1 = - \sqrt{10} \, e^6$ is the unique non-zero torsion of the $\mathrm{SU}(3)$-structure.

Thus, the $\mathrm{SU}(3)$ manifold $(S, \omega, \psi_+, \psi_-)$ is of type $\mathcal{W}_5$ and its induced metric is Einstein with $Scal(g_{\omega,\psi_+})=-30$.
We conclude that the $\mathrm{G}_2$ manifold $(\mathbb{R} \times S, \varphi)$ with
\begin{equation*}
\varphi = \cosh^2 t \, \omega \wedge dt + \cosh^3 t  \, \psi_+,
\end{equation*}
is of type $\mathcal{X}_2 \oplus  \mathcal{X}_3 \oplus \mathcal{X}_4$ and its induced metric is Einstein with $Scal(g_{\varphi})=-42$.
Indeed, by
Corollary~\ref{corolary1} we have $\tau_0=0$, and $\tau_1,\tau_2,\tau_3 \not=0$,
because $\pi_1 \not= 0= \nu_1$.
}
\end{example}

\end{subsection}

\end{section}

\begin{section}{Classification of Einstein $\mathrm{G}_2$-structures}\label{sec-classification-G2}

\noindent In this section we apply the results and constructions of Einstein $\mathrm{G}_2$-structures on warped product
given in the previous sections.
Motivated by the classification problem studied by
Cabrera, Monar and Swann in \cite{CMS}, we realize most of the $\mathrm{G}_2$-classes
in the Einstein setting with scalar curvature of different signs.
Moreover, at the end of the section we produce several explicit families of Einstein $\mathrm{G}_2$-structures with identical Riemannian metric
but having different $\mathrm{G}_2$ type (see \cite{AChFrH,Br,Grigorian,spiro,Lin} for related results).

In Table~\ref{tabla-resumen} we show concrete Einstein examples, when they exist, in the different Fern\'andez-Gray classes
of $\mathrm{G}_2$ manifolds. Since the examples are warped products, in the first column we indicate the fibre.
By $\mathcal{NK}$ and $\mathcal{CY}$ we mean a nearly K\"ahler manifold and a Calabi Yau manifold, respectively.
The fiber $S^3 \times S^3$ is the Einstein $\mathrm{SU}(3)$ manifold described in Example~\ref{example1}.
By $\mathcal{Z}$ we mean the twistor space over a self-dual Einstein $4$-manifold with the Einstein coupled $\mathrm{SU}(3)$-structure given in Example~\ref{twistor}. Finally,
$S$ is the Einstein solvmanifold given in Example~\ref{Sol-6-4}.

The second, third and fourth columns give information about the class of the $\mathrm{SU}(3)$-structure
on the fiber, the Einstein constant $\mu$ of its induced metric, and the torsion forms which are nonzero, respectively.

In Table~\ref{tabla-resumen} we also indicate the functions $f(t)$ that give rise to the Einstein $\mathrm{G}_2$ manifolds.
The functions $\alpha(t)=\cos \theta(t)$ and $\beta(t)=\sin \theta(t)$ defining the appropriate warped $\mathrm{G}_2$-structure
in each case are carefully chosen so that the resulting structure provides a strict example in the $\mathrm{G}_2$-class.
Here we use the term ``strict'' to indicate that the $\mathrm{G}_2$-structure does not belong to any subclass of the given one.
Next we give details for each $\mathrm{G}_2$-class:

\medskip

\noindent $\bullet$ \textbf{The class $\mathcal{P}$}.
Examples are given by the $t$-cone of a nearly K\"ahler manifold (see Proposition~\ref{sec-P} and Corollary~\ref{NK-characterizations}).

\medskip

\noindent $\bullet$ \textbf{Strict examples in $\mathcal{X}_1$}.
Strict examples are given in Proposition~\ref{sec-X1} (see also Corollaries~\ref{cor} and~\ref{NK-characterizations})
as the sine-cone of a nearly K\"ahler manifold.

\medskip

\noindent $\bullet$ \textbf{The classes $\mathcal{X}_2$ and $\mathcal{X}_3$}.
From Proposition~\ref{2+3-G2} (see also Corollary~\ref{no-existes-2-ni-3}) one has that via the warped construction
it is not possible to obtain strict Einstein examples in these classes.

\medskip

\noindent $\bullet$ \textbf{Strict examples in $\mathcal{X}_4$}.
Examples are given in Propositions~\ref{sec-X4-negative} and \ref{sec-X4-zero-positive} as warped products of Calabi-Yau manifolds
or, more generally,
of Einstein $\mathrm{SU}(3)$ manifolds in the class $\mathcal{W}_1^+ \oplus \mathcal{W}_1^-$.
For instance, for a nearly K\"ahler manifold, taking $\alpha(t)=1$ and $\beta(t)=0$ we get
Einstein examples in $\mathcal{X}_4\backslash \mathcal{P}$ with constant $\lambda=-6$ for $f(t)=\sinh t$,
and constant $\lambda=6$ for $f(t)=\sin t$.
Also Ricci flat examples in $\mathcal{X}_4\backslash \mathcal{P}$ can be obtained with the construction described in Proposition~\ref{sec-X4-zero-positive}~(i).

\medskip

\noindent $\bullet$ \textbf{The class $\mathcal{X}_1 \oplus \mathcal{X}_2$}.
On a connected manifold, one has that
$\mathcal{X}_1 \cup \mathcal{X}_2=\mathcal{X}_1 \oplus \mathcal{X}_2$ (see \cite[Theorem 2.1]{CMS}),
so there do not exist strict $\mathrm{G}_2$-structures in this class.
Notice that from Proposition~\ref{2+3-G2} we conclude that for Einstein warped $\mathrm{G}_2$ manifolds
one has $\mathcal{X}_1 \cup \mathcal{X}_2 = \mathcal{X}_1$.

\medskip

\noindent $\bullet$ \textbf{Strict examples in $\mathcal{X}_1 \oplus \mathcal{X}_3$}.
The $\mathrm{G}_2$-structures given in Example~\ref{example1} starting from
$S^3 \times S^3$ provide Einstein coclosed examples.
Moreover, using Corollary~\ref{corolary1} one can see that
the torsion forms $\tau_0, \tau_3 \not= 0$, so they are strict.

\medskip

\noindent $\bullet$ \textbf{Strict examples in $\mathcal{X}_1 \oplus \mathcal{X}_4$}.
A $\mathrm{G}_2$-structure belongs to
$\mathcal{X}_1 \oplus \mathcal{X}_4 \backslash (\mathcal{X}_1 \cup \mathcal{X}_4)$
if and only if the torsion forms satisfy $\tau_2=\tau_3=0$ and $\tau_0, \tau_1 \not= 0$.
In order to construct strict examples in the class $\mathcal{X}_1 \oplus \mathcal{X}_4$,
we consider a nearly-K\"ahler manifold 
$L$, with torsion $\sigma_0=-2$ and Einstein constant $\mu=5$.
Let us take $\alpha(t)=\cos \theta(t)$ and $\beta(t)=\sin \theta(t)$, with function $\theta(t)$ chosen as follows:
\begin{enumerate}
\item[{\rm (i)}] if $\theta(t)=2 \arctan (e^C t)$, with $C$ a constant, and $f(t)=t$, then the corresponding
warped $\mathrm{G}_2$-structure on the manifold $(0,\infty) \times L$ belongs to
$\mathcal{X}_1 \oplus \mathcal{X}_4 \backslash (\mathcal{X}_1 \cup \mathcal{X}_4)$
and its induced metric is Ricci flat;
\item[{\rm (ii)}] if $\theta(t)=2 \arctan (e^C \tanh\frac{t}{2})$, with $C$ a constant, and $f(t)=\sinh t$, then we
get a warped $\mathrm{G}_2$-structure on $(0,\infty) \times L$ sitting in
$\mathcal{X}_1 \oplus \mathcal{X}_4 \backslash (\mathcal{X}_1 \cup \mathcal{X}_4)$
whose induced metric is Einstein with $\lambda=-6$;
\item[{\rm (iii)}] if $\theta(t)=2 \arctan (e^C \tan\frac{t}{2})$, with $C \not= 0$ a constant, and $f(t)=\sin t$, then we
get a warped $\mathrm{G}_2$-structure on the manifold $(0,\pi) \times L$ that belongs to
$\mathcal{X}_1 \oplus \mathcal{X}_4 \backslash (\mathcal{X}_1 \cup \mathcal{X}_4)$ and whose induced metric is Einstein with $\lambda=6$.
\end{enumerate}
Notice that if in the case (iii) one considers $C=0$, then one recovers the sine-cone over a nearly-K\"ahler manifold, and
so the $\mathrm{G}_2$-structure belongs to $\mathcal{X}_1 \backslash \mathcal{P}$.

For characterization results of manifolds in the strict class $\mathcal{X}_1 \oplus \mathcal{X}_4$, see \cite{CI-MRL}.

\medskip

\noindent $\bullet$ \textbf{The class $\mathcal{X}_2 \oplus \mathcal{X}_3$}.
By Proposition~\ref{2+3-G2} we have that via the warped product construction
it is not possible to obtain strict Einstein examples in the class
$\mathcal{X}_2 \oplus \mathcal{X}_3$.

\medskip

\noindent $\bullet$ \textbf{Strict examples in $\mathcal{X}_2 \oplus \mathcal{X}_4$}.
We consider the warped $\mathrm{G}_2$-structures in the class $\mathcal{X}_2 \oplus \mathcal{X}_4$ given in Example~\ref{twistor} starting from
the twistor space $\mathcal{Z}$ over a self-dual Einstein $4$-manifold.
Using Corollary~\ref{corolary1} one can see that
the torsion forms $\tau_1, \tau_2 \not= 0$,
so they belong to $\mathcal{X}_2 \oplus \mathcal{X}_4 \backslash (\mathcal{X}_2 \cup \mathcal{X}_4)$.

\medskip

\noindent $\bullet$ \textbf{Strict examples in $\mathcal{X}_3 \oplus \mathcal{X}_4$}.
For strict examples in $\mathcal{X}_3 \oplus \mathcal{X}_4$,
we consider the product manifold $S^3 \times S^3$ endowed with the $\mathrm{SU}(3)$-structure given in Example~\ref{example1}.
Recall that the torsion reduces to $\sigma_0=-\sqrt{5}$ and $\nu_3\not=0$.
A $\mathrm{G}_2$-structure belongs to $\mathcal{X}_3 \oplus \mathcal{X}_4 \backslash (\mathcal{X}_3 \cup \mathcal{X}_4)$
if and only if $\tau_0=\tau_2=0$ and $\tau_1, \tau_3 \not= 0$.

Taking $(\alpha,\beta)=(1,0)$, we get that the warped $\mathrm{G}_2$-structure
$\varphi = f^2 \omega \wedge dt + f^3 \psi_+$
on the manifold $M=I_f \times S^3 \times S^3$ satisfies $\tau_0=\tau_2=0$
and its induced metric is Ricci flat for $f(t)=t$,
Einstein with positive scalar curvature for $f(t)= \sin \, t$,
and Einstein with negative scalar curvature for $f(t)= \sinh \, t$.

Clearly, $\nu_3\not=0$ implies $\tau_3 \not= 0$ by Corollary~\ref{corolary1}. Moreover,
$\tau_1=0$ if and only if $\sigma_0\,\alpha+2 f'(t)=-\sqrt{5}+2 f'(t)=0$.
Hence, it is clear that $\tau_1\not= 0$ for the functions $f(t)= \sinh \, t$, $t$ or $\sin \, t$.
In conclusion, one has Einstein examples in
$\mathcal{X}_3 \oplus \mathcal{X}_4 \backslash (\mathcal{X}_3 \cup \mathcal{X}_4)$
with Einstein constant $\lambda=-6$, $0$ or $6$.

\medskip

\noindent $\bullet$ \textbf{Strict examples in the classes $\mathcal{X}_1 \oplus \mathcal{X}_2 \oplus \mathcal{X}_3$
and $\mathcal{X}_1 \oplus \mathcal{X}_3 \oplus \mathcal{X}_4$}.
Several strict examples in these classes are constructed in Section~\ref{sec-coupled} on warped products of
Einstein coupled $\mathrm{SU}(3)$ manifolds (see Theorems~\ref{coupled-alpha-beta-constantes}~(ii)--(iii)
and~\ref{coupled-alpha-beta-no-constantes}, and also Example~\ref{deform-2} below).

\medskip

\noindent $\bullet$ \textbf{The class $\mathcal{X}_1 \oplus \mathcal{X}_2 \oplus \mathcal{X}_4$}.
This class is the only one where the existence of a strict Einstein warped $\mathrm{G}_2$ manifold remains open.
An example could be obtained as follows.
Let $L$ be an Einstein $\mathrm{SU}(3)$-structure in the class $\mathcal{W}_1^- \oplus \mathcal{W}_4 \oplus \mathcal{W}_5$,
with Einstein constant $\mu=5$, and such that the nonzero torsion forms satisfy $\sigma_0=-2$ and $\nu_1= \pi_1 \not=0$.
The sine-cone of $L$, i.e. $\alpha(t)=\cos t$ and $\beta(t)=f(t)=\sin t$,
would satisfy that $\tau_3=0$ and $\tau_0,\tau_1,\tau_2\not=0$.
However, we do not know of any such $L$:
\begin{question}\label{su3-question}
{\rm
Are there Einstein $\mathrm{SU}(3)$-structures of positive scalar curvature
whose nonzero torsion is given by $\sigma_0=-2$ and $\nu_1= \pi_1 \not=0$?
}
\end{question}

\medskip

\noindent $\bullet$ \textbf{Strict examples in $\mathcal{X}_2 \oplus \mathcal{X}_3 \oplus \mathcal{X}_4$}.
Einstein examples in this class are given in Theorem~\ref{2+3+4} as a warped product of the twistor space $\mathcal{Z}$, and
in Example~\ref{Sol-6-4} as a warped product of the Einstein solvmanifold~$S$.
Since their torsion satisfies that $\tau_0=0$ and $\tau_1,\tau_2,\tau_3\not=0$, such examples are strict, i.e. they
belong to
$\mathcal{X}_2 \oplus \mathcal{X}_3 \oplus \mathcal{X}_4 \backslash
((\mathcal{X}_2 \oplus \mathcal{X}_3) \cup (\mathcal{X}_2 \oplus \mathcal{X}_4) \cup(\mathcal{X}_3 \oplus \mathcal{X}_4))$.

\medskip

\noindent $\bullet$ \textbf{Strict examples in the class general $\mathcal{X}_1 \oplus \mathcal{X}_2 \oplus \mathcal{X}_3 \oplus \mathcal{X}_4$}.
Examples on warped products of the twistor space $\mathcal{Z}$ are given in Example~\ref{deform-2} below.

\medskip
\medskip

We summarize the previous results in the following theorem. By ``admissible'' we mean that formula~\eqref{scal7}
does not give an obstruction to the existence of an Einstein $\mathrm{G}_2$-structure with
the desired scalar curvature in the given $\mathrm{G}_2$-class.

\hfill
\eject

\begin{theorem}\label{resumenG2}
For Einstein warped $\mathrm{G}_2$-structures, we have:
\begin{enumerate}
\item[{\rm (i)}]
There are Ricci flat warped $\mathrm{G}_2$-structures of every admissible strict type, except possibly for $\mathcal{X}_1\oplus\mathcal{X}_2\oplus\mathcal{X}_4$.
\item[{\rm (ii)}]
There are Einstein warped $\mathrm{G}_2$-structures with positive scalar curvature of every admissible strict type, except possibly for $\mathcal{X}_1\oplus\mathcal{X}_2\oplus\mathcal{X}_4$.
\item[{\rm (iii)}]
There are Einstein warped $\mathrm{G}_2$-structures with negative scalar curvature of every admissible strict type, except for $\mathcal{X}_2$, $\mathcal{X}_3$, $\mathcal{X}_2\oplus\mathcal{X}_3$, and possibly for $\mathcal{X}_1\oplus\mathcal{X}_2\oplus\mathcal{X}_4$.
\end{enumerate}
\end{theorem}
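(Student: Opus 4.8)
The plan is to derive the statement as a synthesis of the constructions of Sections~\ref{sec-Einstein-G2} and~\ref{sec-classification-G2} with the two obstruction mechanisms already available: the scalar curvature formula~\eqref{scal7} and Proposition~\ref{2+3-G2}. First I would pin down the notion of ``admissible'' by running through the sixteen Fern\'andez--Gray classes using Table~\ref{classes-G2}: in a structure of strict type $\mathcal{X}_2$, $\mathcal{X}_3$ or $\mathcal{X}_2\oplus\mathcal{X}_3$ the only surviving torsion forms are among $\tau_2,\tau_3$, so~\eqref{scal7} collapses to $Scal(g_\varphi)=-\frac12|\tau_2|^2$, $-\frac12|\tau_3|^2$ or $-\frac12\bigl(|\tau_2|^2+|\tau_3|^2\bigr)$ respectively, which is $\leq 0$ with equality only in the parallel (non-strict) case; hence these three classes are admissible only for negative scalar curvature. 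For every other class the right-hand side of~\eqref{scal7} is not sign-definite, so all three signs are a priori admissible, while $\mathcal{X}_1\oplus\mathcal{X}_2$ carries no strict structure at all by~\cite[Theorem~2.1]{CMS}, the class $\mathcal{P}$ is necessarily Ricci flat, and strict $\mathcal{X}_1$ (nearly parallel) is necessarily of positive scalar curvature.

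Next I would dispose of the negative-curvature exceptions in~(iii). Since $\mathcal{X}_2$, $\mathcal{X}_3$ and $\mathcal{X}_2\oplus\mathcal{X}_3$ are all contained in $\mathcal{X}_2\oplus\mathcal{X}_3$, Proposition~\ref{2+3-G2} applies: an Einstein warped $\mathrm{G}_2$ manifold in this class is parallel, hence Ricci flat, contradicting $Scal(g_\varphi)<0$. So although~\eqref{scal7} alone does not obstruct negative scalar curvature here, no strict example exists, which is why these three classes are excluded in~(iii); by the first paragraph they are not admissible for zero or positive scalar curvature, so they do not appear in the exception lists of~(i) and~(ii).

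The remaining and longest step is to supply, for each admissible strict type other than $\mathcal{X}_1\oplus\mathcal{X}_2\oplus\mathcal{X}_4$, an explicit Einstein warped $\mathrm{G}_2$-structure of the prescribed scalar-curvature sign, reading it off from the earlier sections. I would go through the list: $\mathcal{P}$ from the $t$-cone of a nearly K\"ahler manifold (Proposition~\ref{sec-P}, Corollary~\ref{NK-characterizations}); strict $\mathcal{X}_1$ from the $\sin t$-cone of a nearly K\"ahler manifold (Proposition~\ref{sec-X1}); $\mathcal{X}_4$ from Propositions~\ref{sec-X4-negative} and~\ref{sec-X4-zero-positive}; $\mathcal{X}_1\oplus\mathcal{X}_3$ from Example~\ref{example1} over $S^3\times S^3$; $\mathcal{X}_1\oplus\mathcal{X}_4$ from the items~(i)--(iii) of this section over a nearly K\"ahler manifold; $\mathcal{X}_2\oplus\mathcal{X}_4$ from Example~\ref{twistor} over the twistor space $\mathcal{Z}$; $\mathcal{X}_3\oplus\mathcal{X}_4$ from the $(\alpha,\beta)=(1,0)$ warped products over $S^3\times S^3$ of this section; $\mathcal{X}_1\oplus\mathcal{X}_2\oplus\mathcal{X}_3$ from Theorems~\ref{coupled-alpha-beta-constantes}~(iii) and~\ref{coupled-alpha-beta-no-constantes}; $\mathcal{X}_1\oplus\mathcal{X}_3\oplus\mathcal{X}_4$ from Theorem~\ref{coupled-alpha-beta-constantes}~(ii); $\mathcal{X}_2\oplus\mathcal{X}_3\oplus\mathcal{X}_4$ from Theorem~\ref{2+3+4} and Example~\ref{Sol-6-4}; and the general class $\mathcal{X}=\mathcal{X}_1\oplus\mathcal{X}_2\oplus\mathcal{X}_3\oplus\mathcal{X}_4$ from the families of Example~\ref{deform-2}. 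In each case the three signs arise from the choice of warping function ($\sinh t$ or $\cosh t$, $t$ or $e^t$, $\sin t$) dictated by Table~\ref{tableBesse}. Finally I would record that $\mathcal{X}_1\oplus\mathcal{X}_2\oplus\mathcal{X}_4$ is the unique admissible strict type for which no example is available, linked to Question~\ref{su3-question}; all of this is collected in Table~\ref{tabla-resumen}.

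I expect the main obstacle to be not a conceptual difficulty but the bookkeeping of \emph{strictness}: for each example above one must check, via Corollary~\ref{corolary1} and the torsion formulas of Theorem~\ref{torsiones}, that the relevant torsion forms are nonzero so the structure does not drop into a proper subclass, and one must carry out the admissibility analysis of~\eqref{scal7} consistently across all sixteen classes and all three signs so that the exception lists in~(i)--(iii) come out exactly right.
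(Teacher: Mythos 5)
Your proposal is correct and follows essentially the same route as the paper: the theorem is proved there precisely by the admissibility analysis of~\eqref{scal7}, the exclusion of negative scalar curvature for $\mathcal{X}_2$, $\mathcal{X}_3$, $\mathcal{X}_2\oplus\mathcal{X}_3$ via Proposition~\ref{2+3-G2}, and the class-by-class catalogue of explicit examples (with the strictness checks through Corollary~\ref{corolary1}) collected in Table~\ref{tabla-resumen}. No substantive difference from the paper's argument.
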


Motivated by these results, we ask the following general questions:

\begin{question}\label{Q1}
{\rm
Are there Einstein $\mathrm{G}_2$ manifolds in the strict class $\mathcal{X}_1\oplus\mathcal{X}_2\oplus\mathcal{X}_4$
with Einstein constant $<0$, $=0$, or $>0$?
}
\end{question}

\begin{question}\label{Q2}
{\rm
Are there Einstein $\mathrm{G}_2$ manifolds with negative scalar curvature in the strict classes
$\mathcal{X}_2$, $\mathcal{X}_3$ or $\mathcal{X}_2\oplus\mathcal{X}_3$?
}
\end{question}

\begin{remark}\label{remark-Q1-Q2}
{\rm
In~\cite[Section 8.4]{CI},
cohomogeneity-one metrics are used to
construct (Ricci-flat) metrics with holonomy in $\mathrm{G}_2$ and in different admissible $\mathrm{G}_2$-classes.
Concerning the class $\mathcal{X}_1\oplus\mathcal{X}_2\oplus\mathcal{X}_4$, one can see that the
vanishing of the torsion form $\tau_3$ implies that the functions defining the metric must be equal,
which leads to $\tau_2=0$ and so the $\mathrm{G}_2$-structure lies in $\mathcal{X}_1\oplus\mathcal{X}_4$.
}
\end{remark}

\medskip
\smallskip

The results in Sections~\ref{sec-coclosed} and~\ref{sec-coupled}
allow to construct explicit families of $\mathrm{G}_2$-structures in different classes but
with the same underlying Einstein metric.

For a fixed Riemannian metric generated by some $\mathrm{G}_2$-structure, it is natural to ask
what are the different $\mathrm{G}_2$-structures
that induce the same metric.
Bryant gave in \cite{Br} an answer to this general question, and recently Lin has investigated in~\cite{Lin} the space of parallel
$\mathrm{G}_2$-structures inducing the same Riemannian metric on a compact $7$-manifold.
In the following examples we provide some families of $\mathrm{G}_2$-structures in distinct classes but
with identical Einstein metric. We will consider deformations of the form
$$
\tilde{\varphi}=\varphi+\chi, \quad \mbox{ where } \chi=f^3(t) \big( A\,\alpha(t) \psi_+ - B\,\beta(t) \psi_-  \big)
$$
for certain constants $A,B$.
General results on deformations of the form $\tilde{\varphi}=\varphi+\chi$, where $\chi$ is a 3-form, are obtained in~\cite{Grigorian}
(see also \cite{spiro}).

\begin{example}\label{deform-1}
$\mathrm{G}_2$-structures with identical Einstein metric on warped products of a nearly K\"ahler manifold.
{\rm
Let us consider $L$ a nearly K\"ahler manifold and let $f(t)=\sin t$.
Following the case (iii) above of strict examples in $\mathcal{X}_1 \oplus \mathcal{X}_4$, consider
$\theta_C(t)=2 \arctan (e^C \tan\frac{t}{2})$, where $C \in \mathbb{R}$ is a constant.
The $\mathrm{G}_2$-structures $\varphi_{C}$
on $M=(0, \pi) \times L$ given by
$$
\varphi_{C} = \sin^2 t\, \omega \wedge dt + \sin^3 t \big( \cos\theta_C(t)\, \psi_+ - \sin\theta_C(t)\, \psi_-  \big).
$$
satisfy that $g_{\varphi_{C}} = dt^2+\sin^2 t \, g_L$, i.e. the induced Einstein metric is identical for all the $\mathrm{G}_2$-structures
in the family. The $\mathrm{G}_2$-type of $\varphi_{C}$ varies as follows:

\smallskip

$\bullet$
 $\mathcal{X}_1$, if and only if $C=0$;

\smallskip

$\bullet$
 $\mathcal{X}_1 \oplus \mathcal{X}_4$, if and only if $C\not= 0$.

\smallskip

\noindent Therefore, we can deform the structure in $\mathcal{X}_1$ to one in the class $\mathcal{X}_1 \oplus \mathcal{X}_4$.
}
\end{example}

\begin{example}\label{deform-2}
$\mathrm{G}_2$-structures with identical Einstein metric on warped products of the twistor space~$\mathcal{Z}$.
{\rm
We define an explicit family of $\mathrm{G}_2$-structures in different classes but
with the same induced Ricci flat metric starting from $L$ in the conditions of Theorem~\ref{coupled-alpha-beta-constantes} (iii),
in particular for $L=\mathcal{Z}$.
Let us
denote $\alpha_0=\frac{3}{c}$ and $\beta_0=\frac{\sqrt{c^2-9}}{c}$, and consider $(a,b) \in \mathbb{R}^2$ the points in the
ellipse of equation $\alpha_0^2\,a^2+\beta_0^2\,b^2=1$. On $M=(0,\infty) \times L$
we take the family of $\mathrm{G}_2$-structures
$$
\varphi_{a,b} = t^2 \omega \wedge dt + t^3 \left(a\,\alpha_0\,\psi_+ - b\,\beta_0\,\psi_-\right).
$$
The induced Ricci flat metric is $g_{\varphi_{a,b}} = dt^2+t^2 g_L$, but the $\mathrm{G}_2$-structure belongs to the strict class

\smallskip

$\bullet$
 $\mathcal{X}_1 \oplus \mathcal{X}_2 \oplus \mathcal{X}_3$, if and only if $(a,b)=(1,1)$;

\smallskip

$\bullet$
 $\mathcal{X}_2 \oplus \mathcal{X}_4$, if and only if $(a,b)=(\alpha_0^{-1},0)$;

\smallskip

$\bullet$
 $\mathcal{X}_1 \oplus \mathcal{X}_3 \oplus \mathcal{X}_4$, if and only if $(a,b)=(0,\beta_0^{-1})$;

\smallskip

$\bullet$
 $\mathcal{X}_1 \oplus \mathcal{X}_2 \oplus \mathcal{X}_3 \oplus \mathcal{X}_4$ for any other values of $(a,b)$.

\medskip

Similar families can be constructed for the other Einstein metrics based on $f(t)=\sin t$ and $f(t)=\sinh t$.
Take $(a,b) \in \mathbb{R}^2$ satisfying $a^2+b^2=1$. On $M=(0,\infty) \times L$, we consider
the family of $\mathrm{G}_2$-structures
$$
\varphi_{a,b} = f^2 \omega \wedge dt + f^3 \left(a\,\psi_+ - b\,\psi_-\right).
$$
The induced Einstein metric is $g_{\varphi_{a,b}} = dt^2+f^2 g_L$, and by Theorem~\ref{coupled-alpha-beta-constantes}~(i)~(ii),
the $\mathrm{G}_2$-structure belongs to the strict class

\smallskip

$\bullet$
 $\mathcal{X}_2 \oplus \mathcal{X}_4$, if and only if $(a,b)=(1,0)$;

\smallskip

$\bullet$
 $\mathcal{X}_1 \oplus \mathcal{X}_3 \oplus \mathcal{X}_4$, if and only if $(a,b)=(0,1)$;

\smallskip

$\bullet$
 $\mathcal{X}_1 \oplus \mathcal{X}_2 \oplus \mathcal{X}_3 \oplus \mathcal{X}_4$ for any other values of $(a,b)$.

\medskip

\noindent The Einstein constant is positive, resp. negative, for $f(t)=\sin t$, resp. $f(t)=\sinh t$.

}
\end{example}

\end{section}


\begin{section}{$\mathrm{Spin}(7)$-structures}\label{sec-Spin(7)}

\noindent
In this section we consider $\mathrm{Spin}(7)$ manifolds given as a warped product of a $\mathrm{G}_2$ manifold, and
we obtain an explicit description of the torsion forms of the warped $\mathrm{Spin}(7)$-structure in terms of the torsion
forms of the $\mathrm{G}_2$-structure.

A $\mathrm{Spin}(7)$-structure on an 8-dimensional manifold $N$ consists in a reduction of the structure group of its frame bundle to the Lie group $\mathrm{Spin}(7)$. Equivalently, such structure can be characterized by the existence of a global non-degenerate
4-form $\phi$ on $N$ which can be locally written as
\begin{equation}\label{spin(7)can}
\begin{aligned}
\phi= &  \ e^{1278}+e^{3478}+e^{5678}+e^{1358}-e^{1468}-e^{2368}-e^{2458}\\[2pt]
              & + e^{1234}+e^{1256}+e^{3456}+e^{1367}+e^{1457}+e^{2357}-e^{2467},
\end{aligned}
\end{equation}
where $\{e^1,\dots, e^8\}$ is a local basis of 1-forms on $N$.
The presence of a $\mathrm{Spin(7)}$-structure on a manifold defines a Riemannian metric $g_{\phi}$ which can be obtained as
\begin{equation*}
g_{\phi}(e_i, e_j) = \frac{1}{24} \sum_{k \,l \,m} \phi(e_i, e_k, e_l, e_m) \, \phi(e_j, e_k, e_l, e_m).
\end{equation*}

Given a $\mathrm{Spin}(7)$ manifold $(N,\phi)$, the group $\mathrm{Spin}(7)$ acts on the space of differential $p$-forms $\Omega^p(N)$ on~$N$.
This action is irreducible on $\Omega^1(N)$ and $\Omega^7(N)$, but it is reducible for $\Omega^p(N)$ with $2 \leq p \leq 6$.
Since the Hodge star operator $\ast_{8}$ induces an isomorphism
$\ast_{8}\, \Omega^p(N) \cong \Omega^{8-p}(N)$,
it suffices to describe the decompositions for $p=2, 3$ and $4$.
In \cite{Br0} it is shown that the $\mathrm{Spin(7)}$ irreducible decompositions for $2\leq p\leq 4$ are
\begin{equation*}
\begin{aligned}
\Omega^2(N) & = \Omega^2_7(N) \oplus \Omega^2_{21}(N),\\
\Omega^3(N) & = \Omega^3_8(N) \oplus \Omega^3_{48}(N),\\
\Omega^4(N) & = \Omega^4_1(N) \oplus \Omega^4_{7}(N) \oplus \Omega^4_{27}(N) \oplus \Omega^4_{35}(N),
\end{aligned}
\end{equation*}
where $\Omega^p_k(N)$ denotes the $\mathrm{Spin}(7)$ irreducible space of $p$-forms of dimension $k$ at every point.
The description on the other degrees is obtained via the isomorphism $\ast_{8}\, \Omega^p_k(N) \cong \Omega^{8-p}_k(N)$
given by the Hodge star operator, and in this section we are only interested in the $\mathrm{Spin}(7)$-type decomposition of 5-forms.
This space decomposes as
\begin{equation*}
\Omega^5(N) = \Omega^5_8(N) \oplus \Omega^5_{48}(N),
\end{equation*}
where
\begin{equation*}
\begin{aligned}
\Omega^5_8(N) & = \{\alpha \wedge \phi \mid \ \alpha \in \Omega^1(N)  \}, \\[2pt]
\Omega^5_{48}(N) & = \{ \gamma \in \Omega^5(N) \mid \ \phi \wedge \ast_{8} \gamma = 0 \}.
\end{aligned}
\end{equation*}

The isomorphisms between $\mathrm{Spin}(7)$ irreducible spaces introduce a scaling factor
on 1-forms $\kappa \in \Omega^1(N)$ as follows:
\begin{equation}\label{iso}
\ast_8\left(\ast_8(\kappa \wedge \phi) \wedge \phi \right) = - 7\kappa.
\end{equation}


\smallskip

The above decomposition of 5-forms on $N$ allows to express the exterior derivative of $\phi$ as
\begin{equation}\label{torsionforms2}
d\phi= \lambda_1 \wedge \phi + \lambda_5,
\end{equation}
where $\lambda_1 \in \Omega^1(N)$ and $\lambda_5 \in \Omega^5_{48}(N)$ are called the \emph{torsion forms} of the $\mathrm{Spin}(7)$-structure.

According to \cite{Fe} the covariant derivative of $\phi$ can be decomposed into two components, namely $Y_1$ and $Y_2$. Thus, a $\mathrm{Spin}(7)$-structure is said of type $\mathcal{P}, \mathcal{Y}_1, \mathcal{Y}_2$ or $\mathcal{Y}=\mathcal{Y}_1\oplus\mathcal{Y}_2$
if the covariant derivative $\nabla^{g_\phi}\phi$ lies in $\{0\}, Y_1, Y_2$ or $Y=Y_1 \oplus Y_2$, respectively.
In terms of the torsion forms, these classes are characterized
in Table~\ref{classes-Spin7}.
In the parallel case, the holonomy reduces to $\mathrm{Spin}(7)$ and the metric is Ricci-flat. Examples of
manifolds with $\mathrm{Spin}(7)$ holonomy are constructed in \cite{Br,BS,Joyce2}.

\medskip

\begin{table}[h]
\caption{\textbf{Classes of $\mathrm{Spin}(7)$-structures}}
\begin{center}
  \begin{tabular}{ |c | c | l |}
    \hline
    Class & Torsion forms & Structure\\ \hline \hline
    $\mathcal{P}$ & $\lambda_1= \lambda_5=0$ & Parallel \\ \hline
    $\mathcal{Y}_1$ & $\lambda_5=0$ & Locally conformal parallel \\ \hline
    $\mathcal{Y}_2$ & $\lambda_1=0$ & Balanced \\ \hline
    $\mathcal{Y}=\mathcal{Y}_1\oplus\mathcal{Y}_2$ & No condition & General $\mathrm{Spin}(7)$ \\
    \hline
  \end{tabular}
\end{center}
\label{classes-Spin7}
\end{table}

As it happened for $\mathrm{SU}(3)$ and $\mathrm{G}_2$ manifolds, the scalar curvature of
a $\mathrm{Spin}(7)$ manifold can be described in terms of the torsion forms.
The expression can be achieved from the formulas described in \cite{Iv,Pu} and is given as follows:
\begin{equation}\label{scal8}
Scal(g_{\phi})=\frac{21}{8}|\lambda_1|^2-\frac{1}{2}|\lambda_5|^2+\frac{7}{2}\, d^{*_8} \lambda_1,
\end{equation}
where $d^{*_8}$ denotes the codifferential, i.e. the adjoint operator of the exterior derivative with respect to the metric.

\medskip

Consider a 7-dimensional manifold $M$ endowed with a $\mathrm{G}_2$-structure $\varphi$.
Let $N$ be the Riemannian product $N=\mathbb{R} \times M$, and denote by
$p \colon N \longrightarrow \mathbb{R}$ and $q \colon N \longrightarrow M$
the projections. Then, the $4$-form
\begin{equation*}
\phi = q^*(\varphi) \wedge p^*(dt) + q^*(\ast_{7}\varphi),
\end{equation*}
with $t$ the coordinate on $\mathbb{R}$, defines a $\mathrm{Spin}(7)$-structure on $N$.
In the following, $\varphi$ and $\ast_{7}\varphi$ will be identified with their pullbacks onto $N$.
More generally, we have

\begin{proposition}
Let $(M, \varphi)$ be a $\mathrm{G}_2$ manifold and consider a function $f \colon I_f \longrightarrow \mathbb{R}$.
Then, the $4$-form on $N=I_f \times M$ given by
\begin{equation}\label{4-form}
\phi=f^3(t)\, \varphi \wedge dt + f^4(t) \ast_{7}\!\varphi
\end{equation}
defines a $\mathrm{Spin}(7)$-structure whose induced metric is
\begin{equation*}
g_{\phi}=f^2(t)\, g_{\varphi} + dt^2.
\end{equation*}
\end{proposition}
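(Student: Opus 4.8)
The plan is to argue exactly as in the proof of Proposition~\ref{prop-3-form}, reducing the statement to a pointwise algebraic identity between the canonical models \eqref{G2can} and \eqref{spin(7)can}. First I would fix a point of $N$ and choose a local orthonormal coframe $\{e^1,\dots,e^7\}$ of $M$ adapted to the $\mathrm{G}_2$-structure, so that $\varphi$ has the canonical expression \eqref{G2can}. With respect to such a coframe the Hodge dual $\ast_7\varphi$ is the standard $\mathrm{G}_2$ $4$-form, namely $\ast_7\varphi=e^{1234}+e^{1256}+e^{3456}+e^{1367}+e^{1457}+e^{2357}-e^{2467}$, which is precisely the second line of \eqref{spin(7)can}. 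This normalization of $\ast_7\varphi$ in an adapted coframe is the only structural input needed, and it can also be verified by a direct computation from \eqref{G2can}.

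Next I would introduce on $N=I_f\times M$ the rescaled coframe $\{h^1,\dots,h^8\}=\{f(t)e^1,\dots,f(t)e^7,dt\}$. Since $h^i\wedge h^j\wedge h^k=f^3(t)\,e^i\wedge e^j\wedge e^k$ and $h^i\wedge h^j\wedge h^k\wedge h^l=f^4(t)\,e^i\wedge e^j\wedge e^k\wedge e^l$, the two summands of \eqref{4-form} become $f^3(t)\,\varphi\wedge dt=\sum\pm\,h^i\wedge h^j\wedge h^k\wedge h^8$ and $f^4(t)\,\ast_7\varphi=\sum\pm\,h^i\wedge h^j\wedge h^k\wedge h^l$, with exactly the index sets and signs occurring in \eqref{G2can} and in the canonical form of $\ast_7\varphi$ above. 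Adding the two contributions, $\phi$ is expressed in the coframe $\{h^1,\dots,h^8\}$ precisely as the canonical $\mathrm{Spin}(7)$ $4$-form \eqref{spin(7)can}. Hence $\phi$ defines a $\mathrm{Spin}(7)$-structure on $N$, and $\{h^1,\dots,h^8\}$ is a local orthonormal coframe for the induced metric $g_\phi$.

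Finally I would read off the metric directly from this coframe: $g_\phi=\sum_{i=1}^8 h^i\otimes h^i=f^2(t)\sum_{i=1}^7 e^i\otimes e^i+dt\otimes dt=f^2(t)\,g_\varphi+dt^2$, using that $\{e^1,\dots,e^7\}$ is orthonormal for $g_\varphi$. This is the claimed formula, and it identifies $(N,\phi)$ with the warped product $I_f\times_f M$.

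There is no genuine obstacle here. The entire argument amounts to checking that the rescaling $e^i\mapsto f(t)e^i$ carries the canonical $\mathrm{G}_2$ model to the canonical $\mathrm{Spin}(7)$ model, and the $t$-dependence of $f$ is harmless: at a fixed point $f(t)$ is just a nonzero scalar, so the algebraic identity between the two canonical forms is unaffected. The only step one should not skip is confirming that $\ast_7\varphi$ is the standard $\mathrm{G}_2$ $4$-form in an adapted coframe; once this normalization is in place, the remainder is a bookkeeping check of fourteen monomials, entirely parallel to the $\mathrm{G}_2$ case treated in Proposition~\ref{prop-3-form}.
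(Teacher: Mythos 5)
Your proposal is correct and follows essentially the same route as the paper: choose an adapted orthonormal coframe for $\varphi$, pass to the rescaled coframe $\{f(t)e^1,\dots,f(t)e^7,dt\}$, observe that $\phi$ takes the canonical form \eqref{spin(7)can}, and read off the metric. The only difference is that you make explicit the (correct) normalization $\ast_7\varphi=e^{1234}+e^{1256}+e^{3456}+e^{1367}+e^{1457}+e^{2357}-e^{2467}$, which the paper leaves implicit.
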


\begin{proof}
Let $\{e^1,\dots,e^7\}$ be a local orthonormal basis of 1-forms such that the 3-form $\varphi$ writes as in \eqref{G2can}.
Now, with respect to the local basis on $N$ given by
$\{h^1,\dots,h^8\}=\{f(t)e^1,\dots,f(t)e^7,dt\}$,
the 4-form $\phi$ can be written as in \eqref{spin(7)can}.
Therefore, $\{h^1,\dots, h^8\}$ is orthonormal for the metric $g_{\phi}$,
and
\begin{equation*}
g_{\phi}=\sum_{i=1}^8 h^i \otimes h^i=f^2(t) \sum_{i=1}^7 e^i \otimes e^i+ dt \otimes  dt = f^2(t)\, g_{\varphi} + dt^2.
\end{equation*}
\end{proof}

By the preceding proposition,
the $\mathrm{Spin}(7)$ manifold $N=I_f \times M$
with $\phi$ described in~\eqref{4-form} corresponds, as a Riemannian manifold, to the warped product $N=I_f \times_{f} M$.
We will refer to such a $\mathrm{Spin}(7)$-structure as a \emph{warped $\mathrm{Spin}(7)$-structure},
and the manifold $(N=I_f \times M,\phi)$ will be called \emph{warped $\mathrm{Spin}(7)$ manifold}.

\begin{lemma}\label{lemma2}
Let $\beta \in \Omega^q(M)$ be a differential $q$-form on $M$,
and let $\ast_7$ and $\ast_8$ be the Hodge star operators induced by the structures $\varphi$ and $\phi$, respectively. Then,
$$
\ast_8\beta = f^{7-2q} \ast_7\! \beta \wedge dt, \quad\quad
\ast_8(\beta \wedge dt) = (-1)^{q+1} f^{7-2q}\ast_7\! \beta.
$$
\end{lemma}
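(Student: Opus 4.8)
Lemma~\ref{lemma2} is the $\mathrm{Spin}(7)$-analogue of Lemma~\ref{lemma1}, and the plan is to prove it in exactly the same way, by reducing the statement to two elementary facts: (a) the relation between the volume forms $vol_8$ on $N$ and $vol_7$ on $M$, and (b) the definition of the Hodge star operator via the identity $\gamma\wedge\ast\eta=\langle\gamma,\eta\rangle\,vol$.

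First I would record that the metric on $N$ is $g_\phi=f^2\,g_\varphi+dt^2$, so that the volume form of $(N,g_\phi)$ is $vol_8=f^7\,vol_7\wedge dt$, where $vol_7$ is the Riemannian volume form of $(M,g_\varphi)$. I would also note how the pointwise inner product on $q$-forms scales: if $\beta,\beta'\in\Omega^q(M)$ are pulled back to $N$, then $\langle\beta,\beta'\rangle_{g_\phi}=f^{-2q}\langle\beta,\beta'\rangle_{g_\varphi}$, since each of the $q$ indices contributes a factor $f^{-2}$ from the inverse metric. This is the only genuinely content-bearing observation.

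For the first identity, I would compute, for an arbitrary $q$-form $\beta$ on $M$ and test against $dt$ appropriately: one has $\beta\wedge\ast_8\beta=\langle\beta,\beta\rangle_{g_\phi}\,vol_8=f^{-2q}\langle\beta,\beta\rangle_{g_\varphi}\,f^7\,vol_7\wedge dt = f^{7-2q}\,(\beta\wedge\ast_7\beta)\wedge dt = \beta\wedge\big(f^{7-2q}\ast_7\!\beta\wedge dt\big)$, where the last step uses that $\ast_7\beta$ is a $(7-q)$-form on $M$ so no sign is introduced moving $dt$ past it in the wedge reordering; more carefully one checks the identity by pairing against all $\gamma\wedge\, dt$ with $\gamma\in\Omega^q(M)$ and against all $\gamma\in\Omega^{q+1}(M)$, the latter pairing being zero on both sides since $\ast_7\!\beta\wedge dt$ has a $dt$. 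This yields $\ast_8\beta=f^{7-2q}\ast_7\!\beta\wedge dt$. For the second identity, apply $\ast_8$ again, using $\ast_8^2=\mathrm{id}$ on an $8$-manifold: from $\ast_8\beta=f^{7-2q}\ast_7\!\beta\wedge dt$ with $\ast_7\!\beta$ of degree $7-q$, set $\gamma=\ast_7\!\beta$, so $\ast_8(\gamma\wedge dt)=f^{-(7-2q)}\beta = f^{2q-7}\ast_7\!\gamma\cdot(\text{sign})$, and tracking $\ast_7^2=\mathrm{id}$ on $M$ together with the degree $7-q$ of $\gamma$ produces exactly the claimed sign $(-1)^{q+1}$; alternatively one derives the second formula directly by the same pairing computation as above, testing $\beta\wedge dt$ against $q$-forms on $M$.

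The only subtlety — hence the step I would be most careful with — is bookkeeping of signs when commuting $dt$ past forms of various degrees and when applying $\ast_7^2$ and $\ast_8^2$ (which on these manifolds of dimensions $7$ and $8$ with Riemannian signature are both the identity on the relevant degrees). As in the proof of Lemma~\ref{lemma1}, once the volume-form relation $vol_8=f^7\,vol_7\wedge dt$ and the scaling $\langle\cdot,\cdot\rangle_{g_\phi}=f^{-2q}\langle\cdot,\cdot\rangle_{g_\varphi}$ on $q$-forms are in hand, everything else is a direct and routine consequence of the defining property of the Hodge star, so I would state it as such rather than grinding through every index.
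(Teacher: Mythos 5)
Your overall strategy is exactly the paper's: the proof in the text is a one-line appeal to the fact that $\ast_8$ is determined by $(g_\phi,vol_8)$ with $vol_8=f^7\,vol_7\wedge dt$, and your two observations (the volume-form relation and the scaling $\langle\cdot,\cdot\rangle_{g_\phi}=f^{-2q}\langle\cdot,\cdot\rangle_{g_\varphi}$ on pulled-back $q$-forms) are precisely what that one line is hiding. The direct pairing computation you sketch does yield both identities, with the sign $(-1)^{q+1}$ in the second one arising from commuting $dt$ past the $(7-q)$-form $\ast_7\beta$.

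There is, however, one concrete slip in the first route you offer for the second identity. On an $8$-dimensional Riemannian manifold one has $\ast_8^2=(-1)^{q(8-q)}=(-1)^q$ on $q$-forms, so $\ast_8^2$ is \emph{not} the identity on odd-degree forms (whereas $\ast_7^2=\mathrm{id}$ in all degrees, since $q(7-q)$ is always even). If you run your argument with $\ast_8^2=\mathrm{id}$ as written, you get $\ast_8(\gamma\wedge dt)=f^{7-2p}\ast_7\gamma$ with no sign at all, contradicting the lemma; the factor $(-1)^{q+1}$ comes exactly from the $(-1)^q$ in $\ast_8^2$ applied to the $q$-form $\beta$, combined with the degree shift $p=7-q$. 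Your alternative derivation by direct pairing (testing $\ast_8(\beta\wedge dt)$ against forms $\gamma'\wedge dt$ with $\gamma'\in\Omega^q(M)$) is correct and produces the right sign, so the lemma is safely established either way; also note that in the first identity the complementary test forms are $\gamma'\wedge dt$ with $\gamma'\in\Omega^{q-1}(M)$, not $\Omega^q(M)$, though this does not affect the conclusion since those pairings vanish on both sides.
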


\begin{proof}
It is a consequence of the fact that the Hodge star operator $\ast_8$ is determined by
$(g_{\phi}, vol_8)$, where $vol_8=f^7 vol_7\wedge dt$ and $vol_7=\frac{1}{7}\varphi\wedge\ast_7\varphi$.
\end{proof}

\begin{theorem}\label{torsionesSpin7}
Let $(M, \varphi)$ be a $\mathrm{G}_2$ manifold with torsion forms $\tau_0, \tau_1, \tau_2, \tau_3$.
Then, the torsion forms $\lambda_1, \lambda_5$ of a warped $\mathrm{Spin}(7)$ manifold $(N=I_f \times M, \phi)$ are given by
\begin{equation*}
\begin{aligned}
\lambda_1   = &  \, \frac{1}{f}(\tau_0+4f')\, dt + \frac{24}{7} \tau_1,\\
\lambda_5   = &  \, -\frac{3}{7} f^3\, \tau_1\wedge \varphi \wedge dt
+ \frac{4}{7} f^4\, \tau_1\wedge \ast_7 \varphi + f^4\, \tau_2\wedge \varphi + f^3 \ast_7\!\tau_3 \wedge dt.
\end{aligned}
\end{equation*}
\end{theorem}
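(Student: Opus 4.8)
The plan is to imitate the proof of Theorem~\ref{torsiones}: first turn the defining equation \eqref{torsionforms2} into closed formulas for $\lambda_1$ and $\lambda_5$ by means of the scaling identity \eqref{iso}, then compute $d\phi$ explicitly from \eqref{4-form} together with the $\mathrm{G}_2$-torsion equations \eqref{torsionforms}, and finally carry out the Hodge-star bookkeeping between dimensions $7$ and $8$ using Lemma~\ref{lemma2}.

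First I would note that \eqref{torsionforms2} gives
\[
\lambda_1 = -\tfrac17 \ast_8\!\big(\ast_8 d\phi \wedge \phi\big), \qquad \lambda_5 = d\phi - \lambda_1\wedge\phi .
\]
Indeed, applying $\ast_8$ to $d\phi=\lambda_1\wedge\phi+\lambda_5$ and wedging with $\phi$, the term coming from $\lambda_5$ drops out because $\ast_8\lambda_5\wedge\phi=\phi\wedge\ast_8\lambda_5=0$ by the description of $\Omega^5_{48}(N)$, while the term coming from $\lambda_1\wedge\phi$ is handled by \eqref{iso}, which also shows automatically that $d\phi-\lambda_1\wedge\phi\in\Omega^5_{48}(N)$, i.e. that the resulting $\lambda_5$ lies in the correct $\mathrm{Spin}(7)$-irreducible space.

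Next I would compute $d\phi$. Since $\varphi$ and $\ast_7\varphi$ are (pullbacks of) forms on $M$ and $f=f(t)$, one gets $d\phi = f^3\,d\varphi\wedge dt + 4f^3f'\,\ast_7\varphi\wedge dt + f^4\,d\!\ast_7\varphi$, and substituting \eqref{torsionforms} expresses $d\phi$ as a linear combination of $\ast_7\varphi\wedge dt$, $\tau_1\wedge\varphi\wedge dt$, $\ast_7\tau_3\wedge dt$, $\tau_1\wedge\ast_7\varphi$ and $\tau_2\wedge\varphi$ with coefficients built from $f,f',\tau_0$. Applying Lemma~\ref{lemma2} term by term yields $\ast_8 d\phi$; wedging with $\phi=f^3\varphi\wedge dt+f^4\ast_7\varphi$ and applying $\ast_8$ once more produces $\lambda_1$. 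The wedge products that survive are controlled by a handful of standard $\mathrm{G}_2$ identities: $\varphi\wedge\varphi=0$ and $\varphi\wedge\ast_7\varphi=7\,vol_7$; the vanishings $\ast_7(\tau_1\wedge\varphi)\wedge\ast_7\varphi=0$ and $\tau_3\wedge\ast_7\varphi=0$ (the $3$-forms $\ast_7(\tau_1\wedge\varphi)\in\Omega^3_7$ and $\tau_3\in\Omega^3_{27}$ are $g_\varphi$-orthogonal to $\varphi$), together with $\tau_3\wedge\varphi=0$ and $\tau_2\wedge\ast_7\varphi=0$ from the descriptions of $\Omega^3_{27}(M)$ and $\Omega^2_{14}(M)$; and the scalings from \eqref{G2prop}, namely $\ast_7(\tau_1\wedge\varphi)\wedge\varphi=-4\ast_7\tau_1$ and $\ast_7(\tau_1\wedge\ast_7\varphi)\wedge\ast_7\varphi=3\ast_7\tau_1$. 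With these, $\ast_8 d\phi\wedge\phi$ splits as $-7(f^6\tau_0+4f^6f')\,vol_7 + 24 f^5\ast_7\tau_1\wedge dt$, and a last application of Lemma~\ref{lemma2} gives $\lambda_1=\tfrac1f(\tau_0+4f')\,dt+\tfrac{24}7\tau_1$.

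Finally, $\lambda_5 = d\phi-\lambda_1\wedge\phi$ is obtained by direct subtraction: the $\ast_7\varphi\wedge dt$ terms cancel, and the coefficients $3-\tfrac{24}7=-\tfrac37$ of $\tau_1\wedge\varphi\wedge dt$ and $4-\tfrac{24}7=\tfrac47$ of $\tau_1\wedge\ast_7\varphi$, together with the untouched terms $f^4\tau_2\wedge\varphi$ and $f^3\ast_7\tau_3\wedge dt$, give the stated expression. The only real difficulty is bookkeeping: one must track the powers of $f$ that Lemma~\ref{lemma2} attaches at each change of degree, and apply to each wedge product exactly the right $\mathrm{G}_2$-representation fact (a vanishing, the norm $|\varphi|^2=7$, or one of the $-4$ / $3$ scalings of \eqref{G2prop}); an error in any single factor would spoil the final coefficients $\tfrac1f,\tfrac{24}7,-\tfrac37,\tfrac47$.
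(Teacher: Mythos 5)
Your proposal is correct and takes essentially the same route as the paper's proof: the same extraction formulas $\lambda_1=-\tfrac17\ast_8(\ast_8 d\phi\wedge\phi)$ and $\lambda_5=d\phi-\lambda_1\wedge\phi$ via \eqref{iso}, the same explicit expansion of $d\phi$ from \eqref{torsionforms} and \eqref{4-form}, and the same use of Lemma~\ref{lemma2} together with the $\mathrm{G}_2$-representation identities and the scalings \eqref{G2prop}. The intermediate coefficients you record (in particular $-7f^6(\tau_0+4f')\,vol_7+24f^5\ast_7\tau_1\wedge dt$ for $(\ast_8 d\phi)\wedge\phi$) are consistent with the stated result.
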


\begin{proof}
From \eqref{iso} and \eqref{torsionforms2}, and since $\lambda_5 \in \Omega^5_{48}(N)$, it follows that
the torsion form $\lambda_1$ is given by
$$
\lambda_1 = -\frac{1}{7} \ast_8\big((\ast_8 d\, \phi) \wedge \phi\big).
$$
In order to compute $\ast_8 d \phi$, we first take into account \eqref{torsionforms} and \eqref{4-form} to get
$$
d \phi  = f^3 (\tau_0 +4 f')\, \ast_7\! \varphi\wedge dt +3 f^3 \, \tau_1 \wedge \varphi \wedge dt + f^3 \ast_7\!\tau_3 \wedge dt
+ 4 f^4 \, \tau_1 \wedge \ast_7 \varphi + f^4 \, \tau_2 \wedge \varphi.
$$
A direct calculation using Lemma~\ref{lemma2} shows that
$$
\ast_8 d \phi  = -f^2 (\tau_0 +4 f')\, \varphi -3 f^2 \ast_7\! (\tau_1 \wedge \varphi) -f^3 \tau_3
+ 4 f \ast_7\! (\tau_1 \wedge \ast_7 \varphi) \wedge dt + f \ast_7\! (\tau_2 \wedge \varphi) \wedge dt.
$$
Now, by \eqref{G2prop} we arrive at
\begin{equation*}
\begin{aligned}
(\ast_8 d \phi) \wedge \phi = & \, -f^6 (\tau_0 +4 f')\, \varphi  \wedge \ast_7 \varphi
-3 f^5 \ast_7\! (\tau_1 \wedge \varphi) \wedge \varphi \wedge dt - 3 f^6 \ast_7\! (\tau_1 \wedge \varphi) \wedge \ast_7 \varphi \\
&  \, + 4 f^5 \ast_7\! (\tau_1 \wedge \ast_7 \varphi) \wedge \ast_7 \varphi \wedge dt
-f^6 (\tau_0 +4 f')\, \varphi  \wedge \ast_7 \varphi + 24\, f^5 \ast_7\!\tau_1 \wedge dt.
\end{aligned}
\end{equation*}
Then, using again Lemma~\ref{lemma2}, we get
\begin{equation*}
\ast_8 \big((\ast_8 d \phi) \wedge \phi  \big) = -\frac{7}{f}(\tau_0 +4 f')\, dt - 24\, \tau_1,
\end{equation*}
concluding that
\begin{equation*}
\lambda_1 = \, \frac{1}{f}(\tau_0+4f')\, dt + \frac{24}{7} \tau_1.
\end{equation*}

Finally, for the torsion form $\lambda_5$ we use that $\lambda_5= d \phi - \lambda_1 \wedge \phi$, together with the expressions of
$d\phi$ and $\lambda_1$ given above.
\end{proof}

A direct consequence of the previous theorem is the following

\begin{corollary}\label{corolary2}
The torsion forms of a warped $\mathrm{Spin}(7)$-structure satisfy:
\begin{equation*}
\begin{aligned}
\lambda_1  = 0  & \iff    \left \{  \begin{array}{ll} i)   \, \, & \,  \tau_0 + 4 f'=0,\\[3pt]
                                       ii)  \,  \, & \, \tau_1=0.
\end{array}
\right.
\\
\lambda_5  = 0 & \iff \left \{  \begin{array}{ll} iii)   &\tau_1=0, \\[3pt]
                                       iv)  &\tau_2=0, \\[3pt]
                                       v)  &\tau_3=0.
\end{array}
\right.
 \\
\end{aligned}
\end{equation*}
\end{corollary}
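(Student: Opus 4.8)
The plan is to deduce everything directly from the explicit formulas for $\lambda_1$ and $\lambda_5$ in Theorem~\ref{torsionesSpin7}, by splitting a form on $N=I_f\times M$ into a part that is a pullback from $M$ and a part containing the factor $dt$, and then invoking the $\mathrm{G}_2$-type decompositions recalled in Section~\ref{sec-G2}.

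First I would treat $\lambda_1$. Since $\tau_1\in\Omega^1(M)$ is identified with its pullback to $N$ it carries no $dt$-component, whereas $\tfrac1f(\tau_0+4f')\,dt$ is a multiple of $dt$; hence $\lambda_1=\tfrac1f(\tau_0+4f')\,dt+\tfrac{24}{7}\tau_1$ vanishes if and only if both summands do, and since $f$ is nowhere zero this is precisely $\tau_0+4f'=0$ together with $\tau_1=0$, i.e.\ conditions $i)$ and $ii)$.

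Next I would handle $\lambda_5$. Grouping the four terms of the formula in Theorem~\ref{torsionesSpin7}, the part without $dt$ is $f^4\bigl(\tfrac47\,\tau_1\wedge\ast_7\varphi+\tau_2\wedge\varphi\bigr)$ and the part containing $dt$ is $f^3\bigl(-\tfrac37\,\tau_1\wedge\varphi+\ast_7\tau_3\bigr)\wedge dt$; these lie in complementary summands of $\Omega^5(N)$ and $f$ never vanishes, so $\lambda_5=0$ iff $\tfrac47\,\tau_1\wedge\ast_7\varphi+\tau_2\wedge\varphi=0$ and $-\tfrac37\,\tau_1\wedge\varphi+\ast_7\tau_3=0$ as forms on $M$. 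In the first equation $\tau_1\wedge\ast_7\varphi\in\Omega^5_7(M)$ while $\tau_2\wedge\varphi=-\ast_7\tau_2\in\Omega^5_{14}(M)$, and since $\Omega^5(M)=\Omega^5_7(M)\oplus\Omega^5_{14}(M)$ each term vanishes separately; by the scaling identities \eqref{G2prop} the map $\kappa\mapsto\kappa\wedge\ast_7\varphi$ is injective on $\Omega^1(M)$ and $\ast_7$ is injective, so $\tau_1\wedge\ast_7\varphi=0\iff\tau_1=0$ and $\ast_7\tau_2=0\iff\tau_2=0$. In the second equation $\tau_1\wedge\varphi\in\Omega^4_7(M)$ and $\ast_7\tau_3\in\Omega^4_{27}(M)$ sit in complementary summands of $\Omega^4(M)=\Omega^4_1(M)\oplus\Omega^4_7(M)\oplus\Omega^4_{27}(M)$, so both vanish, and (again by \eqref{G2prop}) $\kappa\mapsto\kappa\wedge\varphi$ is injective, giving $\tau_1=0$ and $\tau_3=0$. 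Hence $\lambda_5=0\iff\tau_1=\tau_2=\tau_3=0$, which is $iii)$, $iv)$, $v)$.

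As the statement itself indicates, this is an immediate corollary, so I do not expect a genuine obstacle; the only point requiring care is the bookkeeping of which $\mathrm{G}_2$-irreducible subspace each wedge product $\tau_i\wedge(\cdot)$ belongs to, together with the injectivity of the relevant maps $\kappa\mapsto\kappa\wedge\varphi$ and $\kappa\mapsto\kappa\wedge\ast_7\varphi$ — both of which are supplied by the irreducible decompositions of $\Omega^{\bullet}(M)$ and the scaling relations \eqref{G2prop}.
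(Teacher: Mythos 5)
Your argument is correct and is exactly the verification the paper leaves implicit when it calls the corollary ``a direct consequence'' of Theorem~\ref{torsionesSpin7}: split each torsion form into its $dt$-component and its pullback component, then use the $\mathrm{G}_2$-irreducible decompositions of $\Omega^4(M)$ and $\Omega^5(M)$ together with the injectivity of $\kappa\mapsto\kappa\wedge\varphi$ and $\kappa\mapsto\kappa\wedge\ast_7\varphi$ from \eqref{G2prop}. Nothing is missing; the bookkeeping of which summand ($\Omega^5_7$ vs.\ $\Omega^5_{14}$, and $\Omega^4_7$ vs.\ $\Omega^4_{27}$) each term lands in is exactly the point that makes the statement immediate.
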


\end{section}


\begin{section}{Einstein warped $\mathrm{Spin}(7)$ manifolds}\label{sec-Einstein-Spin(7)}

\noindent
Our aim in this section is to construct Einstein $8$-manifolds in the different $\mathrm{Spin}(7)$-classes by means
of warped products of certain Einstein $\mathrm{G}_2$ manifolds, i.e. by means of warped $\mathrm{Spin}(7)$-structures.
As in Section~\ref{sec-Einstein-G2}, in order to use directly Table~\ref{tableBesse},
in this section we will also consider the Einstein metrics to be ``normalized''.

We begin with a characterization of the warped $\mathrm{Spin}(7)$ manifolds that are parallel,
which is related to a well known result in \cite{Bar}.

\begin{proposition}\label{Spin(7)P}
There exists a parallel warped $\mathrm{Spin}(7)$-structure on $N=I_f \times M$ if and only if the fiber $(M, \varphi)$ belongs to $\mathcal{X}_1$,
i.e. it is a nearly parallel $\mathrm{G}_2$ manifold, with torsion $\tau_0=-4$.

Furthermore, in that case $N=(0, \infty) \times M$ is the cone with $\mathrm{Spin}(7)$-structure
$$
\phi= t^3 \,  \varphi \wedge dt + t^4 \, \ast_7 \varphi.
$$
\end{proposition}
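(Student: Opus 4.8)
The plan is to read off everything from the description of the torsion forms of a warped $\mathrm{Spin}(7)$-structure given in Theorem~\ref{torsionesSpin7}, in the form of Corollary~\ref{corolary2}, and to combine it with the scalar curvature formula~\eqref{scal7} for $\mathrm{G}_2$-structures together with the normalization of Einstein constants adopted in Section~\ref{sec-Einstein-G2}. No new computation is really needed beyond what is already available.

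For the \emph{if} direction, assume $(M,\varphi)$ is nearly parallel with $\tau_0=-4$; by definition of the class $\mathcal{X}_1$ this means $\tau_1=\tau_2=\tau_3=0$, and by~\eqref{scal7} we get $Scal(g_\varphi)=\tfrac{21}{8}\tau_0^2=42$, so $g_\varphi$ is Einstein with the normalized positive constant $\mu=6=d-1$ for $d=\dim M=7$. Take $f(t)=t$ on $I_f=(0,\infty)$, so that the $4$-form in~\eqref{4-form} reads $\phi=t^3\,\varphi\wedge dt+t^4\,\ast_7\varphi$. Then $\tau_0+4f'=-4+4=0$ and $\tau_1=\tau_2=\tau_3=0$, so Corollary~\ref{corolary2} gives $\lambda_1=\lambda_5=0$; hence $\phi$ is a parallel warped $\mathrm{Spin}(7)$-structure. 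Its induced metric $g_\phi=t^2g_\varphi+dt^2$ is Ricci flat, in agreement with Table~\ref{tableBesse} for $\mu=d-1$, $\lambda=0$ and $f(t)=t$.

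For the \emph{only if} direction, suppose $\phi$ in~\eqref{4-form} is a parallel warped $\mathrm{Spin}(7)$-structure, i.e.\ $\lambda_1=\lambda_5=0$. By Corollary~\ref{corolary2} we obtain $\tau_1=\tau_2=\tau_3=0$ (so $(M,\varphi)$ lies in $\mathcal{X}_1$) and $\tau_0+4f'(t)=0$ for all $t\in I_f$. Since $\tau_0$ is a function on $M$, pulled back to $N$ and hence independent of $t$, while $f'$ depends only on $t$, this identity forces $f'\equiv k$ and $\tau_0\equiv-4k$ for a constant $k$; as $f$ is never constant, $k\neq0$, so $\tau_0\neq0$ and $(M,\varphi)$ is genuinely nearly parallel. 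By~\eqref{scal7}, $g_\varphi$ is Einstein with constant $\mu=\tfrac{3}{8}\tau_0^2=6k^2>0$, which after normalization equals $6$, forcing $k^2=1$. Choosing the parametrization with $k=1$ (the case $k=-1$ is obtained by the change $t\mapsto-t$), we get $\tau_0=-4$ and, after translating $t$ and using that $f$ must not vanish on $I_f$, we may take $I_f=(0,\infty)$ and $f(t)=t$; thus $\phi=t^3\,\varphi\wedge dt+t^4\,\ast_7\varphi$, as claimed. This is exactly the statement that the metric cone over a nearly parallel $\mathrm{G}_2$ manifold has holonomy contained in $\mathrm{Spin}(7)$, cf.~\cite{Bar}. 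The only point requiring some care is precisely this last bookkeeping, where the algebraic relation $\tau_0+4f'=0$ must be combined with the Einstein condition and the chosen normalization to pin down both $\tau_0=-4$ and $f(t)=t$; everything else is immediate from Corollary~\ref{corolary2}.
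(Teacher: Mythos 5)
Your proof is correct and follows essentially the same route as the paper: both directions reduce to Corollary~\ref{corolary2} combined with the normalization of the Einstein constants via Table~\ref{tableBesse} (equivalently~\eqref{scal7}), with your separation-of-variables argument for the constancy of $\tau_0$ and the discussion of $k=\pm1$ simply making explicit what the paper leaves implicit in its choice of normalized warping functions. No gaps.
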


\begin{proof}
The parallel condition on the $\mathrm{Spin}(7)$-structure is equivalent to
$\lambda_1= \lambda_5 =0$.
From Corollary \ref{corolary2}, and taking into account the possible functions in Table~\ref{tableBesse},
these equations are equivalent to
$$\tau_1 = \tau_2 = \tau_3 = 0, \qquad \tau_0=-4 \qquad \text{ and } \qquad f(t)=t,$$
and the result follows.
\end{proof}

\medskip

The following three propositions give characterizations of the warped $\mathrm{Spin}(7)$ manifolds that are
Einstein and locally conformal parallel, depending on the sign of its scalar curvature.

\begin{proposition}\label{Spin(7)Z11}
There exists an Einstein locally conformal parallel warped $\mathrm{Spin}(7)$-structure $\phi$ on $N=I_f \times M$ with $Scal(g_{\phi})=56$
if and only if the fiber $(M, \varphi)$ belongs to $\mathcal{X}_1$ with
torsion $\tau_0=\pm 4$.

Furthermore, in that case $N=(0, \pi) \times M$ is the sine-cone with $\mathrm{Spin}(7)$-structure
$$
\phi= \sin^3 t \, \varphi \wedge dt + \sin^4 t \, \ast_7 \varphi.
$$
\end{proposition}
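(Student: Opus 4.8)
The plan is to reduce the statement to Corollary~\ref{corolary2}, which translates the locally conformal parallel condition into conditions on the torsion of the fiber, together with the warped--product Einstein dictionary of Table~\ref{tableBesse}. The starting point is the observation that ``locally conformal parallel'' means exactly $\lambda_5=0$, which by Corollary~\ref{corolary2} is equivalent to $\tau_1=\tau_2=\tau_3=0$, i.e. $(M,\varphi)$ being a nearly parallel $\mathrm{G}_2$ manifold. For such a structure \eqref{torsionforms} gives $d\varphi=\tau_0\,\ast_7\varphi$ and $d\ast_7\varphi=0$, so $0=d^2\varphi=d\tau_0\wedge\ast_7\varphi$ forces $\tau_0$ to be a (locally) constant function; this is the standard fact underlying both implications.

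For the ``if'' direction I would argue as follows. Assume $(M,\varphi)\in\mathcal{X}_1$ with $\tau_0=\pm4$. Since $\tau_1=\tau_2=\tau_3=0$, formula~\eqref{scal7} gives $Scal(g_\varphi)=\frac{21}{8}\tau_0^2=42=d(d-1)$ with $d=\dim M=7$, so $g_\varphi$ is a normalised Einstein metric with Einstein constant $\mu=6$. Choosing $f(t)=\sin t$ on $I_f=(0,\pi)$, Table~\ref{tableBesse} (equivalently equation~\eqref{q=1} with $\lambda=d$ and $\mu=d-1$) shows that the warped product $N=I_f\times_f M$ is Einstein with $\lambda=d=7$, hence $Scal(g_\phi)=8\lambda=56$. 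By Corollary~\ref{corolary2} the vanishing $\tau_1=\tau_2=\tau_3=0$ yields $\lambda_5=0$, i.e. $\phi$ is locally conformal parallel; it is not parallel because a parallel $\mathrm{Spin}(7)$-structure is Ricci flat while $Scal(g_\phi)=56\neq0$ (alternatively, Theorem~\ref{torsionesSpin7} gives $\lambda_1=\frac{\tau_0+4\cos t}{\sin t}\,dt\neq0$). Finally $\phi$ is precisely \eqref{4-form} with $f(t)=\sin t$, which is the claimed sine-cone.

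For the converse, suppose $\phi$ on $N=I_f\times M$ is an Einstein locally conformal parallel warped $\mathrm{Spin}(7)$-structure with $Scal(g_\phi)=56$. From $\lambda_5=0$ and Corollary~\ref{corolary2} we get $\tau_1=\tau_2=\tau_3=0$, so $(M,\varphi)\in\mathcal{X}_1$. The Einstein constant of the $8$-dimensional manifold $N$ is $\lambda=\frac18\,Scal(g_\phi)=7=\dim M$. Since $f$ is non-constant by the standing hypothesis, the Besse structure result applies: $g_\varphi$ is Einstein and, reading Table~\ref{tableBesse} with $d=7$, the value $\lambda=d=7$ occurs only in the column with $\mu=d-1=6$ and $f(t)=\sin t$; hence, after the normalisation, $f(t)=\sin t$ and $Scal(g_\varphi)=d(d-1)=42$. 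Substituting $\tau_1=\tau_2=\tau_3=0$ into \eqref{scal7} gives $42=\frac{21}{8}\tau_0^2$, so $\tau_0=\pm4$, and $N=(0,\pi)\times M$ carries the sine-cone \eqref{4-form} with $f=\sin t$. The only delicate point is this appeal to Table~\ref{tableBesse}: one must make sure $f$ is genuinely non-constant (so that $N$ is not merely a Ricci-flat product, cf. Theorem~\ref{teorema-del-Besse}) and that $\lambda=7=\dim M$ leaves $f(t)=\sin t$ as the unique possibility; everything else is a direct substitution into Corollary~\ref{corolary2}, \eqref{scal7} and Theorem~\ref{torsionesSpin7}.
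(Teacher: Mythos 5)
Your proposal is correct and follows essentially the same route as the paper: translate the locally conformal parallel condition into $\tau_1=\tau_2=\tau_3=0$ via Corollary~\ref{corolary2}, use Table~\ref{tableBesse} to pin down $f(t)=\sin t$ and $Scal(g_\varphi)=42$, and then read off $\tau_0=\pm 4$ from \eqref{scal7}. The extra details you supply (constancy of $\tau_0$ from $d^2\varphi=0$, and the explicit check that $\lambda_1\neq 0$) are consistent with, and slightly more explicit than, the paper's argument; note only that in the ``if'' direction the Einstein property of the fiber comes from the standard fact that nearly parallel $\mathrm{G}_2$ manifolds are Einstein, not from the scalar curvature computation alone.
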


\begin{proof}
Suppose there exists such a warped product $(N=I_f \times M,\phi)$. Since $\lambda_5=0$, Corollary~\ref{corolary2}
forces the $\mathrm{G}_2$-structure $\varphi$ to be in the class $\mathcal{X}_1$.
Since $Scal(g_{\phi})=56$,
by Table~\ref{tableBesse} we get that the warping function is necessarily given by $f(t)= \sin t$
and $Scal(g_{\varphi})=42$.
Now, by~\eqref{scal7}, the torsion of the $\mathrm{G}_2$-structure is $\tau_0=\pm 4$.

Conversely, if we consider a nearly parallel $\mathrm{G}_2$ manifold with torsion $\tau_0=\pm 4$,
then the warped $\mathrm{Spin}(7)$-structure with $f(t)=\sin t$ is Einstein (with constant $7$)
and locally conformal parallel by Corollary~\ref{corolary2}.
\end{proof}

\begin{proposition}\label{Spin(7)Z10}
There exists a Ricci flat (strict) locally conformal parallel warped $\mathrm{Spin}(7)$-structure $\phi$ on $N=I_f \times M$
if and only if the fiber $(M, \varphi)$ belongs to $\mathcal{X}_1$ with torsion
$\tau_0=4$.

Furthermore, in that case $N=(0, \infty) \times M$ is the cone with $\mathrm{Spin}(7)$-structure
$$
\phi= t^3 \, \varphi \wedge dt + t^4 \ast_7 \varphi.
$$
\end{proposition}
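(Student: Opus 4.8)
The plan is to follow closely the scheme of the proof of Proposition~\ref{Spin(7)Z11}, trading the two $\mathrm{Spin}(7)$-geometric conditions (Ricci flat, strictly locally conformal parallel) for conditions on the $\mathrm{G}_2$ torsion forms of the fiber by means of Corollary~\ref{corolary2}, and then using Table~\ref{tableBesse} together with the scalar curvature formula~\eqref{scal7} to determine both the warping function and the value of $\tau_0$.

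First I would prove the forward implication. Assume $\phi$ on $N=I_f\times M$ is a warped $\mathrm{Spin}(7)$-structure whose induced metric is Ricci flat and which is strictly locally conformal parallel, i.e.\ $\lambda_5=0$ but $\lambda_1\neq 0$. By Corollary~\ref{corolary2} the vanishing of $\lambda_5$ is equivalent to $\tau_1=\tau_2=\tau_3=0$, so $(M,\varphi)$ lies in $\mathcal{X}_1$, that is, it is nearly parallel. Since $g_\phi=f^2g_\varphi+dt^2$ is Ricci flat and $f$ is nonconstant, equation~\eqref{q=1} with $\lambda=0$ reduces to $(f')^2=\mu/(d-1)$ with $d=\dim M=7$; the right-hand side being constant forces $f'$ to be constant, hence nonzero because $f$ is nonconstant, so $\mu>0$, and after normalization $\mu=d-1=6$ and, up to homothety and reflection, $f(t)=t$ on $I_f=(0,\infty)$ (this is precisely the $\lambda=0$ column of Table~\ref{tableBesse}). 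Consequently $Scal(g_\varphi)=7\mu=42$, and since $\varphi\in\mathcal{X}_1$ formula~\eqref{scal7} collapses to $42=\tfrac{21}{8}\tau_0^2$, giving $\tau_0=\pm 4$. Finally, strictness means $\lambda_1\neq 0$; with $f(t)=t$ Corollary~\ref{corolary2} gives $\lambda_1=0\iff \tau_0+4=0$, so $\tau_0\neq -4$ and therefore $\tau_0=4$.

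For the converse I would start from $(M,\varphi)\in\mathcal{X}_1$ with $\tau_0=4$. Then $\tau_1=\tau_2=\tau_3=0$, so~\eqref{scal7} gives $Scal(g_\varphi)=\tfrac{21}{8}\cdot 16=42$, i.e.\ the induced metric is Einstein with normalized constant $\mu=6=d-1$; Table~\ref{tableBesse} then permits the choice $\lambda=0$, $f(t)=t$, which makes $g_\phi=t^2g_\varphi+dt^2$ Ricci flat on $N=(0,\infty)\times M$. By Corollary~\ref{corolary2}, $\tau_1=\tau_2=\tau_3=0$ yields $\lambda_5=0$, whereas $\lambda_1=\tfrac{1}{t}(\tau_0+4f')\,dt=\tfrac{8}{t}\,dt\neq 0$, so $\phi$ is strictly locally conformal parallel. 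The explicit expression of $\phi$ on the cone is then read off from~\eqref{4-form} with $f(t)=t$, namely $\phi=t^3\,\varphi\wedge dt+t^4\ast_7\varphi$.

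The argument involves no real obstacle, since it is a direct application of results already established in the excerpt (Corollary~\ref{corolary2}, Table~\ref{tableBesse}, and the identity~\eqref{scal7}). The one point requiring care — and the only difference with respect to Proposition~\ref{Spin(7)Z11} — is the sign of $\tau_0$: both $\tau_0=4$ and $\tau_0=-4$ yield a Ricci flat locally conformal parallel warped $\mathrm{Spin}(7)$-structure, but by Proposition~\ref{Spin(7)P} the value $\tau_0=-4$ produces the parallel cone, so it is the word ``strict'' that singles out $\tau_0=4$.
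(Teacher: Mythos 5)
Your proposal is correct and follows essentially the same route as the paper: the paper's own proof simply says it is analogous to Proposition~\ref{Spin(7)Z11} with Ricci flatness forcing $f(t)=t$, so that strictness ($\lambda_1\neq 0$) excludes $\tau_0=-4$ and leaves $\tau_0=4$. Your write-up just makes explicit the steps (Corollary~\ref{corolary2}, Table~\ref{tableBesse}, formula~\eqref{scal7}) that the paper leaves implicit, and correctly identifies the sign of $\tau_0$ as the only delicate point.
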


\begin{proof}
The proof is similar to that of Proposition~\ref{Spin(7)Z11}, but taking into account that the Ricci flatness forces
the warping function to be $f(t)=t$. Hence, the locally conformal parallel $\mathrm{Spin}(7)$-structure is strict
only when $\tau_0=4$.
\end{proof}

\begin{proposition}\label{Spin(7)Z12}
There exists an Einstein locally conformal parallel warped $\mathrm{Spin}(7)$-structure $\phi$ on $N=I_f \times M$ with $Scal(g_{\phi})=-56$
if and only if the $\mathrm{G}_2$-structure $\varphi$ on the fiber $M$ is one of the following:

\smallskip

\noindent
$\bullet$ Parallel, and then $N = \mathbb{R} \times M$ is the exponential-cone with the $\mathrm{Spin}(7)$-structure
$$\phi= e^{3t}\, \varphi \wedge dt + e^{4t} \ast_7\! \varphi;$$

\noindent
$\bullet$ Nearly parallel with torsion $\tau_0=\pm 4$, and then $N = (0,\infty) \times M$ is the hyperbolic sine-cone with the

\hskip-.04cm $\mathrm{Spin}(7)$-structure $$\phi= \sinh^{3} t \, \varphi \wedge dt +\sinh^{4}  t \, \ast_7\! \varphi.$$
\end{proposition}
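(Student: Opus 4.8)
The plan is to follow the template of the proofs of Propositions~\ref{Spin(7)Z11} and~\ref{Spin(7)Z10}, the only new ingredient being a short sign analysis that selects the admissible warping functions. For the forward direction, assume $\phi$ is an Einstein locally conformal parallel warped $\mathrm{Spin}(7)$-structure on $N=I_f\times M$ with $Scal(g_\phi)=-56$. Since $\lambda_5=0$, Corollary~\ref{corolary2} forces $\tau_1=\tau_2=\tau_3=0$ on the fiber, i.e. the $\mathrm{G}_2$-structure $\varphi$ lies in the class $\mathcal{X}_1$ (possibly $\mathcal{P}$). By \eqref{scal7}, $Scal(g_\varphi)=\tfrac{21}{8}\tau_0^2\ge 0$. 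The induced metric $g_\phi=f^2 g_\varphi+dt^2$ is Einstein with constant $\lambda=Scal(g_\phi)/8=-7=-\dim M$; by the warped-product Einstein equations recalled before Theorem~\ref{teorema-del-Besse}, the fiber $(M,g_\varphi)$ is Einstein, and after normalization its Einstein constant $\mu$ is one of $-6$, $0$, $6$ with $f$ as in Table~\ref{tableBesse}. The inequality $Scal(g_\varphi)\ge 0$ rules out $\mu=-6$ (the $\cosh t$ case). Hence either $\mu=0$, which by $Scal(g_\varphi)=0$ gives $\tau_0=0$ so that $\varphi$ is parallel and $f(t)=e^t$ (the exponential-cone, on $I_f=\mathbb{R}$), or $\mu=6$, which by $\tfrac{21}{8}\tau_0^2=42$ gives $\tau_0=\pm 4$ and $f(t)=\sinh t$ (the hyperbolic sine-cone, on $I_f=(0,\infty)$). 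In both cases the $\mathrm{Spin}(7)$-form $\phi=f^3\,\varphi\wedge dt+f^4\ast_7\varphi$ takes the stated shape.

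For the converse, start with a fiber $\mathrm{G}_2$-structure $\varphi$ that is either parallel or strictly nearly parallel with torsion $\tau_0=\pm 4$; in the latter case $\varphi$ is Einstein with $Scal(g_\varphi)=\tfrac{21}{8}\tau_0^2=42$ (a normalized constant $\mu=6$) by the result on nearly parallel $\mathrm{G}_2$-manifolds recalled in Section~\ref{sec-G2}. Take $f(t)=e^t$ in the parallel case and $f(t)=\sinh t$ in the nearly parallel case. Since $\tau_1=\tau_2=\tau_3=0$, Corollary~\ref{corolary2} gives $\lambda_5=0$, so $\phi$ is locally conformal parallel. Substituting the relevant $(\mu,f)$ into \eqref{q=1} yields $\lambda=-7$, i.e. $Scal(g_\phi)=-56$, so $\phi$ is Einstein. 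Finally, by Theorem~\ref{torsionesSpin7} one has $\lambda_1=\tfrac{1}{f}(\tau_0+4f')\,dt$, which equals $4\,dt$ when $f(t)=e^t$ and $\tfrac{4(\cosh t\pm 1)}{\sinh t}\,dt$ on $(0,\infty)$ when $f(t)=\sinh t$; in every case $\lambda_1\neq 0$, so $\phi$ is genuinely locally conformal parallel rather than parallel (as it must be, being non-Ricci-flat).

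The computations here are all routine applications of Corollary~\ref{corolary2}, \eqref{scal7}, Theorem~\ref{torsionesSpin7}, \eqref{q=1} and Table~\ref{tableBesse}; the only point requiring any thought is the sign argument excluding the $\cosh t$ warping function, which is precisely where $\mathcal{X}_1$-structures behave differently from generic $\mathrm{G}_2$-structures, and keeping the normalizations consistent so that the warping function from Table~\ref{tableBesse} matches the Einstein constant $\mu=6$ actually produced by $\tau_0=\pm 4$.
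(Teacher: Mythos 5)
Your proof is correct and follows essentially the same route as the paper's (which is stated very tersely as "similar to the preceding propositions"): use Corollary~\ref{corolary2} to place the fiber in $\mathcal{X}_1$, use $Scal(g_\varphi)=\tfrac{21}{8}\tau_0^2\ge 0$ to exclude the $\cosh t$ warping, and then read off the two admissible cases $(\mu,f)=(0,e^t)$ and $(6,\sinh t)$ from Table~\ref{tableBesse}. Your explicit sign argument and the verification that $\lambda_1\neq 0$ simply spell out details the paper leaves implicit.
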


\begin{proof}
The proof is similar to the preceding propositions, but since $Scal(g_{\phi})=-56$,
by Table~\ref{tableBesse} we have that either $\tau_0=0$ and $f(t)= e^t$,
or $Scal(g_{\varphi})=42$ and $f(t)= \sinh t$.
In the first case the fiber is parallel, and in the second case it is a nearly parallel $\mathrm{G}_2$ manifold with torsion $\tau_0=\pm 4$.
\end{proof}

As a consequence one gets Einstein locally conformal parallel $\mathrm{Spin}(7)$ manifolds with
negative, zero or positive constant (see Corollary~\ref{cor} for $\mathrm{G}_2$ manifolds
satisfying the hypothesis of the following corollary).

\begin{corollary}\label{nuevo-fin}
Let $(M, \varphi)$ be a nearly parallel $\mathrm{G}_2$ manifold with torsion $\tau_0=4$. Then,
there are warped $\mathrm{Spin}(7)$-structures with fiber $(M, \varphi)$ which are (strict) locally conformal parallel and Einstein
with constant~$-7$, $0$ or $7$, by taking the function $f(t)=\sinh t$, $t$ or $\sin t$, respectively.
\end{corollary}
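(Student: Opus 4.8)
The plan is to obtain this as an immediate assembly of Propositions~\ref{Spin(7)Z12}, \ref{Spin(7)Z10} and~\ref{Spin(7)Z11}, reading them at the particular value $\tau_0 = 4$. First I would note that a nearly parallel $\mathrm{G}_2$ manifold $(M,\varphi)$ has $\tau_1 = \tau_2 = \tau_3 = 0$, so formula~\eqref{scal7} gives $Scal(g_\varphi) = \frac{21}{8}\,\tau_0^2 = 42$; equivalently $g_\varphi$ is Einstein with constant $\mu = 6 = d-1$ for $d = \dim M = 7$. This is exactly the normalization that makes Table~\ref{tableBesse} applicable to the fiber without any further rescaling, and it is the reason the value $\tau_0 = 4$ is the relevant one.

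Next, for each of the three warping functions $f(t) = \sinh t$, $t$, $\sin t$ I would form the warped $\mathrm{Spin}(7)$-structure~\eqref{4-form} on $N = I_f \times M$ with $I_f = (0,\infty)$, $(0,\infty)$, $(0,\pi)$ respectively. By Theorem~\ref{torsionesSpin7} (or directly Corollary~\ref{corolary2}), the vanishing of $\tau_1,\tau_2,\tau_3$ yields $\lambda_5 = 0$, so each such structure is locally conformal parallel; moreover $\lambda_1 = \frac{1}{f}(\tau_0 + 4f')\,dt = \frac{4(1+f')}{f}\,dt$, which is nonzero on the respective interval since $1+f' > 0$ and $f > 0$ there. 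Hence the $\mathrm{Spin}(7)$-structure is never parallel, i.e. it is strictly in $\mathcal{Y}_1$. Finally, since $g_\varphi$ is Einstein with $\mu = d-1 = 6$, Table~\ref{tableBesse} (equivalently~\eqref{q=1} with $d = 7$) shows that $g_\phi = f^2 g_\varphi + dt^2$ is Einstein with constant $\lambda = -7$, $0$, $7$ for $f(t) = \sinh t$, $t$, $\sin t$ respectively, that is, $Scal(g_\phi) = -56$, $0$, $56$.

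I do not expect a genuine obstacle: the corollary is essentially bookkeeping built on the three preceding propositions. The only point that deserves an explicit line is strictness in the Ricci-flat case $f(t) = t$, where one must rule out that the structure is actually parallel; this follows either from the computation $\lambda_1 = \frac{8}{t}\,dt \neq 0$ above, or from Proposition~\ref{Spin(7)P}, which characterizes parallel warped $\mathrm{Spin}(7)$-structures over $M$ by the opposite sign $\tau_0 = -4$. In the two non-Ricci-flat cases strictness is automatic, since a parallel $\mathrm{Spin}(7)$-structure is Ricci-flat whereas here the Einstein constant is $\pm 7 \neq 0$.
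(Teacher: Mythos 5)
Your proposal is correct and follows essentially the same route as the paper: the corollary is stated there precisely as an assembly of Propositions~\ref{Spin(7)Z12}, \ref{Spin(7)Z10} and~\ref{Spin(7)Z11} read at $\tau_0=4$, with strictness in the Ricci-flat case coming from the sign of $\tau_0$ and in the other two cases from the fact that parallel structures are Ricci-flat. Your extra uniform check that $\lambda_1=\frac{4(1+f')}{f}\,dt\neq 0$ on each interval, and the normalization remark $Scal(g_\varphi)=\frac{21}{8}\tau_0^2=42$, are exactly the details the paper leaves implicit.
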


In the following result we note that there are no Einstein (strict) balanced warped $\mathrm{Spin}(7)$ manifolds.

\begin{proposition}\label{Spin(7)Z2}
A warped $\mathrm{Spin}(7)$ manifold is balanced and Einstein if and only if it is a parallel $\mathrm{Spin}(7)$ manifold.
\end{proposition}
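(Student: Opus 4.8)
The plan is to unwind both hypotheses—\emph{balanced} and \emph{Einstein}—into statements about the warping function $f$ and the torsion of the fiber, and then feed the result into the scalar curvature formula \eqref{scal8}.

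First I would use the balanced condition. By Corollary~\ref{corolary2}, $\lambda_1=0$ is equivalent to $\tau_1=0$ together with $\tau_0+4f'=0$ on $N=I_f\times M$. Now $\tau_0$ is (the pullback of) a function on the fiber $M$, while $f'$ is (the pullback of) a function on the interval $I_f$, so the identity $\tau_0=-4f'$ forces both sides to be one and the same real constant; call it $-4k$, so $\tau_0\equiv-4k$ and $f'\equiv k$. Since a warped $\mathrm{Spin}(7)$-structure has, by the standing convention, non-constant $f$, we get $k\neq0$ and $f(t)=kt+c$ is a non-constant affine function of $t$.

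Next I would bring in the Einstein condition. As $N$ is a warped product with one-dimensional base and seven-dimensional fiber, being Einstein forces $(M,\varphi)$ to be Einstein with some constant $\mu$ and the relation \eqref{q=1}, namely $(f')^2+\tfrac{\lambda}{7}f^2=\tfrac{\mu}{6}$, to hold identically in $t$. Since $f'\equiv k$ is constant while $f(t)=kt+c$ is non-constant, the coefficient of $t^2$ on the left-hand side must vanish, i.e.\ $\tfrac{\lambda}{7}k^2=0$, hence $\lambda=0$ (equivalently: by Table~\ref{tableBesse}, a non-constant warping function with constant derivative occurs only in the Ricci-flat column). Thus $N$ is Ricci-flat, so $Scal(g_\phi)=0$.

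Finally I would substitute $\lambda_1=0$ into \eqref{scal8}: it collapses to $0=Scal(g_\phi)=-\tfrac12|\lambda_5|^2$, so $\lambda_5=0$ as well, and therefore the $\mathrm{Spin}(7)$-structure is parallel. (Alternatively one can avoid \eqref{scal8}: having $\tau_1=0$ and $\tau_0$ constant, the $\mathrm{G}_2$ scalar curvature \eqref{scal7} of the fiber reads $7\mu=\tfrac{21}{8}\tau_0^2-\tfrac12|\tau_2|^2-\tfrac12|\tau_3|^2$, while \eqref{q=1} with $\lambda=0$ gives $\mu=6k^2=\tfrac38\tau_0^2$; comparing the two yields $|\tau_2|^2+|\tau_3|^2=0$, so $\varphi$ is nearly parallel with $\tau_0=-4k$, and after the usual normalization $f(t)=t$ one is exactly in the hypotheses of Proposition~\ref{Spin(7)P}.) The converse is immediate, since a parallel $\mathrm{Spin}(7)$-structure has $\lambda_1=\lambda_5=0$ and holonomy in $\mathrm{Spin}(7)$, hence is balanced and Ricci-flat. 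The only genuinely delicate point is the middle step—ruling out $\lambda<0$—that is, observing that the constancy of $f'$ forced by balancedness is incompatible with the negative-curvature warping functions $\cosh t$, $e^t$, $\sinh t$ of Table~\ref{tableBesse}.
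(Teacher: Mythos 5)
Your argument is correct and follows essentially the same route as the paper: balancedness forces $\tau_1=0$ and $\tau_0=-4f'$ to be a constant, which among the admissible Einstein warping functions leaves only the affine (Ricci-flat) case, and then \eqref{scal8} with $\lambda_1=0$ and $Scal(g_\phi)=0$ kills $\lambda_5$. You merely spell out the step the paper leaves implicit — that the separation of variables in $\tau_0=-4f'$ makes $f'$ constant and hence rules out $\cosh t$, $e^t$, $\sinh t$ and $\sin t$ — which is a worthwhile clarification but not a different proof.
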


\begin{proof}
Given an Einstein balanced warped $\mathrm{Spin}(7)$ manifold, since $\lambda_1=0$, from Corollary~\ref{corolary2}
we get that the torsion forms of the $\mathrm{G}_2$-structure on the fiber satisfy
$$\tau_1=0, \qquad \tau_0 = -4 $$
and the warping function in Table~\ref{tableBesse} is $f(t)=t$.
Thus, the $\mathrm{Spin}(7)$-structure is necessarily Ricci flat and by \eqref{scal8} we get $\lambda_5=0$.
In conclusion, the warped $\mathrm{Spin}(7)$-structure is parallel.
\end{proof}

\medskip

\medskip

As in Section~\ref{sec-classification-G2}, we summarize in Table~\ref{tabla-resumen2} the results obtained above for Einstein
warped $\mathrm{Spin}(7)$ manifold in the different strict classes:

\medskip

\noindent $\bullet$ \textbf{The class $\mathcal{P}$}.
Examples are given by the $t$-cone of a nearly parallel $\mathrm{G}_2$ manifold (see Proposition~\ref{Spin(7)P}).

\medskip

\noindent $\bullet$ \textbf{The class $\mathcal{Y}_1$}.
Strict examples with Einstein constant~$-7$, $0$ or $7$ are given in Corollary~\ref{nuevo-fin}
as the hyperbolic sine-cone, cone or sine-cone, respectively, of a nearly parallel $\mathrm{G}_2$ manifold with torsion $\tau_0=4$.

\medskip

\noindent $\bullet$ \textbf{The class $\mathcal{Y}_2$}.
By Proposition~\ref{Spin(7)Z2} it is not possible to obtain strict Einstein examples
via the warped construction.

\medskip

\noindent $\bullet$ \textbf{The general class $\mathcal{Y}_1 \oplus \mathcal{Y}_2$}.
Strict examples with positive, null and negative scalar curvature can be achieved as the different cones
of Einstein locally conformal parallel $\mathrm{G}_2$ manifolds
(see Section~\ref{sec-classification-G2} for examples of such $\mathrm{G}_2$ manifolds).

\medskip

We summarize the previous results in the following

\begin{theorem}\label{resumenSpin7}
For Einstein warped $\mathrm{Spin}(7)$-structures, we have:
\begin{enumerate}
\item[{\rm (i)}]
There are Ricci flat warped $\mathrm{Spin}(7)$-structures of every admissible strict type.
\item[{\rm (ii)}]
There are Einstein warped $\mathrm{Spin}(7)$-structures with positive scalar curvature of every admissible strict type.
\item[{\rm (iii)}]
There are Einstein warped $\mathrm{Spin}(7)$-structures with negative scalar curvature of every admissible strict type, except for $\mathcal{Y}_2$.
\end{enumerate}
\end{theorem}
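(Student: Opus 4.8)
The plan is to assemble the statement from the structural results of Sections~\ref{sec-Spin(7)} and~\ref{sec-Einstein-Spin(7)}, organised by $\mathrm{Spin}(7)$-class and by the sign of the scalar curvature, using the torsion-form characterizations in Table~\ref{classes-Spin7} together with formula~\eqref{scal8} to decide which scalar curvatures are \emph{admissible} in each class.

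First I would dispose of the two extreme classes. Every parallel $\mathrm{Spin}(7)$-structure is Ricci flat, so in the class $\mathcal{P}$ only zero scalar curvature is admissible, and Proposition~\ref{Spin(7)P} realises it as the $t$-cone of a nearly parallel $\mathrm{G}_2$ manifold with $\tau_0=-4$. For the balanced class $\mathcal{Y}_2$ one has $\lambda_1=0$, so~\eqref{scal8} gives $Scal(g_\phi)=-\tfrac12|\lambda_5|^2\le 0$, with equality exactly when $\lambda_5=0$, i.e. when the structure is parallel; hence no admissible \emph{strict} $\mathcal{Y}_2$-structure can have zero or positive scalar curvature, which is why $\mathcal{Y}_2$ is not listed as an exception in items (i) and (ii). For negative scalar curvature $\mathcal{Y}_2$ is admissible, but Proposition~\ref{Spin(7)Z2} shows that a balanced Einstein warped $\mathrm{Spin}(7)$ manifold is necessarily parallel; this accounts for the single exception appearing in item (iii).

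Next I would treat the locally conformal parallel class $\mathcal{Y}_1$ ($\lambda_5=0$) and the general class $\mathcal{Y}=\mathcal{Y}_1\oplus\mathcal{Y}_2$. For $\mathcal{Y}_1$, no sign of the scalar curvature is obstructed by~\eqref{scal8}, and Corollary~\ref{nuevo-fin} produces strict locally conformal parallel Einstein warped $\mathrm{Spin}(7)$-structures with constant $-7$, $0$ and $7$ as the hyperbolic sine-cone, cone and sine-cone, respectively, of a nearly parallel $\mathrm{G}_2$ manifold with $\tau_0=4$; such fibers exist by Corollary~\ref{cor}, and one checks from Theorem~\ref{torsionesSpin7} that $\lambda_1=\tfrac1f(\tau_0+4f')\,dt\neq 0$ in each case, so these structures are strictly in $\mathcal{Y}_1$. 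For the general class I would take the fiber $(M,\varphi)$ to be an Einstein locally conformal parallel $\mathrm{G}_2$ manifold, i.e. of strict type $\mathcal{X}_4$, so that $\tau_0=\tau_2=\tau_3=0$ and $\tau_1\neq 0$; plugging this into Theorem~\ref{torsionesSpin7} gives $\lambda_1=\tfrac{4f'}{f}\,dt+\tfrac{24}{7}\tau_1\neq 0$ and $\lambda_5=-\tfrac37 f^3\,\tau_1\wedge\varphi\wedge dt+\tfrac47 f^4\,\tau_1\wedge\ast_7\varphi\neq 0$, so the warped structure is of strict type $\mathcal{Y}$. Choosing the warping function from Table~\ref{tableBesse} according to the normalized Einstein constant of the fiber — $f(t)=\sin t$ over a fiber with $Scal(g_\varphi)=42$, $f(t)=t$ over a Ricci flat fiber, and $f(t)=\cosh t$, $e^t$ or $\sinh t$ otherwise — yields Einstein warped $\mathrm{Spin}(7)$ manifolds of strict type $\mathcal{Y}$ with $Scal(g_\phi)$ equal to $56$, $0$ and $-56$; the Einstein $\mathcal{X}_4$ fibers of each admissible scalar curvature needed here were constructed in Propositions~\ref{sec-X4-negative} and~\ref{sec-X4-zero-positive}.

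The routine part of the argument is the bookkeeping with Table~\ref{tableBesse} and the normalization conventions adopted in Section~\ref{sec-Einstein-G2}. The one point requiring genuine care is the admissibility analysis via~\eqref{scal8}: one must check that for $\mathcal{Y}_2$ the scalar curvature is forced to be non-positive, so that the absence of $\mathcal{Y}_2$ from items (i) and (ii) is by non-admissibility rather than by a missing construction, and that the only true gap is the negative-scalar-curvature $\mathcal{Y}_2$ case, which is excluded by Proposition~\ref{Spin(7)Z2}.
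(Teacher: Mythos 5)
Your proposal is correct and follows essentially the same route as the paper: it assembles the theorem from Proposition~\ref{Spin(7)P} for $\mathcal{P}$, Corollary~\ref{nuevo-fin} for strict $\mathcal{Y}_1$, Proposition~\ref{Spin(7)Z2} for the $\mathcal{Y}_2$ exception, and warped products over Einstein locally conformal parallel ($\mathcal{X}_4$) fibers from Propositions~\ref{sec-X4-negative} and~\ref{sec-X4-zero-positive} for the general class, exactly as in the summary preceding the theorem and in Table~\ref{tabla-resumen2}. Your explicit admissibility check via~\eqref{scal8} that a strict $\mathcal{Y}_2$-structure must have negative scalar curvature is a correct (and welcome) spelling-out of a point the paper leaves implicit.
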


Motivated by this result, we ask the following question:

\begin{question}\label{Q-spin7}
{\rm
Are there Einstein (non parallel) balanced $\mathrm{Spin}(7)$ manifolds?
}
\end{question}

\newpage

\begin{landscape}

\begin{table}[h]
\caption{\textbf{Einstein warped $\mathrm{G}_2$-structures}}
\begin{tabular}{|c|c|c|c|c|c|c|c|}\hline
Fiber & $\mathrm{SU}(3)$-class & $\mu$ & $\mathrm{SU}(3)$ non-vanishing &  $f(t)$-cone metric & Strict $\mathrm{G}_2$-class & Einstein constant & $\mathrm{G}_2$ non-vanishing \\
&           &           & torsion forms             &       &                  &          $\lambda$        & torsion forms \\ \hline \hline
$\mathcal{NK}$ & $\mathcal{W}_1^-$ & $5$ & $\sigma_0 =\!-2$
& $t$ & $\mathcal{P}$ & $0$ & -- \\
\hline
$\mathcal{NK}$ & $\mathcal{W}_1^-$ & $5$ & $\sigma_0 =\!-2$
& $\sin t$ & $\mathcal{X}_1$ & $6$ & $\tau_0$ \\
\hline
$\nexists$ &  & & &  & $\mathcal{X}_2$ & & \\
\hline
$\nexists$ &  & & &  & $\mathcal{X}_3$ & & \\
\hline
$\mathcal{CY}$ & $\{0\} $ & $0$ & -- & $e^t$ & $\mathcal{X}_4$ & $-6$ & $\tau_1$ \\
\cline{1-5} \cline{7-7}
$\mathcal{NK}$ & $\mathcal{W}_1^-$ & $5$ & $\sigma_0 =\!-2$  & $\sinh t,\ t,\ \sin t$ &  & $-6,\ 0,\ 6$ &  \\
\hline
$\nexists$ & & & &  & $\mathcal{X}_1 \oplus \mathcal{X}_2= \mathcal{X}_1 \cup \mathcal{X}_2$ & & \\
\hline
$S^3 \times S^3$ & $\mathcal{W}_1^- \oplus \mathcal{W}_3 $ & $5$ & $\sigma_0,\ \nu_3$
& $\sinh t,\ t,\ \sin t$ & $\mathcal{X}_1 \oplus \mathcal{X}_3$ & $-6,\ 0,\ 6$ & $\tau_0,\ \tau_3$ \\
\hline
$\mathcal{NK}$ & $\mathcal{W}_1^-$ & $5$ & $\sigma_0 =\!-2$
& $\sinh t,\ t,\ \sin t$ & $\mathcal{X}_1 \oplus \mathcal{X}_4$ & $-6,\ 0,\ 6$ & $\tau_0,\ \tau_1$ \\
\hline
$\nexists$ &  & &  & & $\mathcal{X}_2 \oplus \mathcal{X}_3$ & & \\
\hline
$\mathcal{Z}$ & $\mathcal{W}_1^- \oplus \mathcal{W}_2^- $  & $5$ & $\sigma_0,\ \sigma_2$
& $\sinh t,\ t,\ \sin t$ & $\mathcal{X}_2 \oplus \mathcal{X}_4$ & $-6,\ 0,\ 6$ & $\tau_1,\ \tau_2$ \\
\hline
$S^3 \times S^3$ & $\mathcal{W}_1^- \oplus \mathcal{W}_3 $ & $5$ & $\sigma_0,\ \nu_3$ &
$\sinh t,\ t,\ \sin t$ & $\mathcal{X}_3 \oplus \mathcal{X}_4$ & $-6,\ 0,\ 6$ & $\tau_1,\ \tau_3$ \\
\hline

$\mathcal{Z}$ & $\mathcal{W}_1^- \oplus \mathcal{W}_2^- $  & $5$ & $\sigma_0,\ \sigma_2$
& $\sinh t,\ t,\ \sin t$ & $\mathcal{X}_1 \oplus  \mathcal{X}_2 \oplus \mathcal{X}_3$ & $-6,\ 0,\ 6$ & $\tau_0,\ \tau_2,\ \tau_3$ \\
\hline
(?) &   &  &
&  &  $\mathcal{X}_1 \oplus \mathcal{X}_2 \oplus \mathcal{X}_4$ &  & $\tau_0,\ \tau_1,\ \tau_2$ \\
\hline

$\mathcal{Z}$ & $\mathcal{W}_1^- \oplus \mathcal{W}_2^- $  & $5$ & $\sigma_0,\ \sigma_2$
& $\sinh t,\ t,\ \sin t$ & $\mathcal{X}_1 \oplus \mathcal{X}_3 \oplus \mathcal{X}_4$ & $-6,\ 0,\ 6$ & $\tau_0,\ \tau_1,\ \tau_3$ \\
\hline

$S$ & $\mathcal{W}_5 $  & $-5$ & $\pi_1$
& $\cosh t$ & $\mathcal{X}_2 \oplus \mathcal{X}_3 \oplus \mathcal{X}_4$ & $-6$ & $\tau_1,\ \tau_2,\ \tau_3$  \\

\cline{1-5} \cline{7-7}

$\mathcal{Z}$ & $\mathcal{W}_1^- \oplus \mathcal{W}_2^- $  & $5$ & $\sigma_0,\ \sigma_2$
& $\sinh t,\ t,\ \sin t$ &  & $-6,\ 0,\ 6$ &  \\
\hline

$\mathcal{Z}$ & $\mathcal{W}_1^- \oplus \mathcal{W}_2^- $  & $5$ & $\sigma_0,\ \sigma_2$
& $\sinh t,\ t,\ \sin t$ & $\mathcal{X}_1 \oplus \mathcal{X}_2 \oplus \mathcal{X}_3 \oplus \mathcal{X}_4$
& $-6,\ 0,\ 6$ & $\tau_0,\ \tau_1,\ \tau_2,\ \tau_3$ \\
\hline

\end{tabular}
\label{tabla-resumen}
\end{table}

\begin{table}[h]
\caption{\textbf{Einstein warped $\mathrm{Spin}(7)$-structures}}
\begin{tabular}{|c|c|c|c|c|c|c|c|}\hline
Fiber & $\mathrm{G}_2$-class & $\mu$ & $\mathrm{G_2}$ non-vanishing &  $f(t)$-cone metric & Strict $\mathrm{Spin}(7)$-class & Einstein constant & $\mathrm{Spin}(7)$ non-vanishing \\
&           &           & torsion forms             &       &                  &          $\lambda$        & torsion forms \\ \hline \hline
$\mathcal{NP}$ & $\mathcal{X}_1$ & $6$ & $\tau_0 =\ -4$
& $t$ & $\mathcal{P}$ & $0$ & -- \\
\hline
$\mathcal{NP}$ &$\mathcal{X}_1$ & $6$ & $\tau_0$ & $\sinh t, \ t, \ \sin t$ & $\mathcal{Y}_1$ & $-7,\ 0,\ 7$ & $\lambda_1$ \\
\cline{1-5} \cline{7-7}
$\mathcal{P}$ & $\{0\}$ & $0$ & --  & $e^t$ &  & $-7$ &  \\
\hline
$\nexists$ &  & & &  & $\mathcal{Y}_2$ & & \\
\hline
$\mathcal{LCP}$ & $\mathcal{X}_4$  & $6$ & $\tau_1$ & $\sinh t,\ t,\ \sin t$ & $\mathcal{Y}_1 \oplus \mathcal{Y}_2$ & $-7,\ 0,\ 7$ &
$\lambda_1,\ \lambda_5$ \\
\cline{3-3} \cline{5-5} \cline{7-7}
 &  & $0$ &   & $e^t$ &  & $-7$ &  \\
\cline{3-3} \cline{5-5}
 &  & $-6$ &   & $\cosh t$ &  &  &  \\
\hline
\end{tabular}
\label{tabla-resumen2}
\end{table}

\end{landscape}

\medskip

\end{section}


\noindent{\textbf{Acknowledgments}}.
We are grateful to Stefan Ivanov for useful comments about Question~\ref{Q1}.
This work has been partially supported by the project MTM2017-85649-P (AEI/Feder, UE), and
Gobierno de Arag\'on/Fondo Social Europeo--Grupo Consolidado E15 Geometr\'{\i}a.

\medskip


\end{document}